\documentclass[10pt]{article}
\usepackage[utf8]{inputenc}

\usepackage{amsmath,amsfonts,amsthm,amscd,amssymb,thmtools,mathabx}
\usepackage{bm}
\usepackage{cases}
\usepackage[hypertexnames=false]{hyperref}
\usepackage{cite}
\usepackage{xcolor}

\numberwithin{equation}{section}

\usepackage{cleveref}
\crefname{equation}{}{}

\usepackage{enumitem}

\usepackage{graphicx}

\usepackage{authblk}

\declaretheorem[
name=Theorem,
numberwithin=section
]{theorem}

\declaretheorem[
name=Lemma,
sibling=theorem
]{lemma}

\declaretheorem[
name=Proposition,
sibling=theorem
]{proposition}

\declaretheorem[
name=Corollary,
sibling=theorem
]{corollary}

\theoremstyle{definition}

\declaretheoremstyle[
headfont=\normalfont\bfseries,
bodyfont=\normalfont,
spaceabove=6pt,
spacebelow=6pt,
headpunct={.},
postheadspace=0.5em,
qed=\(\diamond\)
]{mystyle-remark}

\declaretheorem[
style=mystyle-remark,
name=Remark,
sibling=theorem
]{remark}

\newcommand{\RR}{\mathbb{R}}

\def\ii{\mathrm{i}}
\def\dd{\mathrm{d}}
\def\ee{\mathrm{e}}

\DeclareMathOperator{\Tr}{Tr}

\begin{document}

\title{Modified Scattering for the Time-Dependent Kohn--Sham Equation}

\author[1]{Masaki Kawamoto}
\author[2]{Jinyeop Lee}
\author[3]{Changhun Yang}
\author[4]{Chanjin You}

\date{}

\renewcommand{\Affilfont}{\small}

\affil[1]{Research Institute for Interdisciplinary Science, Okayama University, 3-1-1, Tsushimanaka, Kita-ku, Okayama City, Okayama, 700-8530, Japan\\
    \href{mailto:kawamoto.masaki@okayama-u.ac.jp}{kawamoto.masaki@okayama-u.ac.jp}}

\affil[2]{Department of Applied Mathematics, Kyung Hee University, 1732 Deogyeong-daero, Giheung-gu, Yongin-si, Gyeonggi-do, South Korea\\
	\href{mailto:jinyeop.lee@khu.ac.kr}{jinyeop.lee@khu.ac.kr}}

\affil[3]{Department of Mathematics, Chungbuk National University, Cheongju 28644, South Korea\\
	\href{mailto:chyang@chungbuk.ac.kr}{chyang@chungbuk.ac.kr}}

\affil[4]{Department of Mathematics,
Yale University, 219 Prospect Street, New Haven, CT 06511, USA\\
\href{mailto:chanjin.you@yale.edu}{chanjin.you@yale.edu}}

\renewcommand\Authands{, and }

\maketitle

{
	\let\thefootnote\relax
	\footnotetext{All authors contributed equally; authors are listed in alphabetical order.}
}

\begin{abstract}
	We study the long-time behavior of the (critical) Kohn--Sham equation in two and three dimensions, i.e.,
	\[
	\ii \partial_t {\gamma} = \Big[-\frac{1}{2}\Delta + \lambda \, |\cdot|^{-1} \ast \rho_{{\gamma}} +  \mu \, \rho_{{\gamma}}^{1/d}, {\gamma} \Big] \quad \text{for} \quad d=2,3.
	\]
	By introducing a suitable ``square root'' of the density matrix and exploiting the pseudo-conformal transform, we establish global well-posedness for small initial data in an appropriate weighted Schatten norm.
	We also prove the optimal time decay of the particle density and establish modified scattering for small and localized solutions.
	In particular, our results provide a resolution to the open problems proposed by Pusateri and Sigal (2021) for the critical and subcritical regimes, rigorously proving their conjectures regarding modified scattering in the critical case and linear scattering in the subcritical cases.
	Our results place these scattering phenomena in the operator-valued setting of density matrices, thereby extending the classical scalar theory to a broader framework.
\end{abstract}

\section{Introduction}

\subsection{Background and Previous Results}

Consider the spatial dimension $d \in \{2, 3\}$. We study a time-dependent density matrix $\gamma(t)$ acting on the Hilbert space $L^2(\mathbb{R}^d)$, which we assume to be a positive semi-definite, trace-class operator. Let $\gamma(t, x, y)$ denote the associated integral kernel\footnote{Recall that the integral kernel $\gamma(x,y)$ of an operator $\gamma$ on $L^2(\mathbb{R}^d)$ is a measurable function $\mathbb{R}^d \times \mathbb{R}^d \to \mathbb{C}$ such that, for any $f \in L^2(\mathbb{R}^d)$, the action of the operator is given by
	\[
	(\gamma f)(x) = \int_{\mathbb{R}^d} \gamma(x,y) f(y) \, \mathrm{d}y.
	\]} of $\gamma(t)$. Where there is no ambiguity, we will suppress the time dependence and simply write $\gamma$ or $\gamma(x,y)$ for simplicity.

We study the time-dependent Kohn--Sham equation
\begin{equation}
	\label{eqn:KS-gamma}
	\begin{cases}
		\begin{aligned}
			\ii \partial_{t} \gamma
			&=
			\Big[
			-\frac{1}{2}\Delta + V(\rho_{\gamma}),
			\, \gamma
			\Big], \\
			\gamma|_{t=1} &= \gamma_1,
		\end{aligned}
	\end{cases}
\end{equation}
where the density is defined by $\rho_\gamma(t,x) = \gamma(t,x,x)$. The potential consists of two parts
\[
V = V_{\text{H}} + V_{\text{xc}},
\]
with
\[
V_{\text{H}}(\rho) = \lambda \, |\cdot|^{-1} * \rho
\quad\text{and}\quad
V_{\text{xc}}(\rho) = \mu \, \rho^{1/d},
\]
where $\lambda,\mu \in \mathbb{R}$.

The term $V_{\text{H}}$ represents the Coulomb (Hartree) interaction, while $V_{\text{xc}}$ corresponds to a local density approximation of exchange correlation effects. The physical background and heuristic derivation of \Cref{eqn:KS-gamma} are discussed in \Cref{sec:motivation}.
In this paper, we investigate the long-time behavior of solutions to \Cref{eqn:KS-gamma} for sufficiently small initial data.

The nonlinear potential in \eqref{eqn:KS-gamma} can be generalized to
\begin{equation}\label{generalized potential}
	\widetilde{V} = \widetilde{V}_{\text{H}} + \widetilde{V}_{\text{xc}},
\end{equation}
where
\[
\widetilde{V}_{\text{H}}(\rho) = \lambda \, W * \rho
\quad\text{and}\quad
\widetilde{V}_{\text{xc}}(\rho) = \mu \,\rho^{\beta},
\]
for some interaction kernel $W$ on $\mathbb{R}^d$ and
$\beta>0$.
The existence of global solutions to \eqref{eqn:KS-gamma} and their long-time behavior depend strongly on the class of interaction kernels and the range of the exponent $\beta$.
Among various types of interaction kernels, a prototypical example is given by $W(x)=|x|^{-\alpha}$ with $0<\alpha<d$.
Our model corresponds to the case $\alpha=1$ and $\beta=1/d$ which is not only physically relevant but also mathematically distinguished, as it lies at the threshold between short-range and long-range behavior, often referred to as the \textit{scattering-critical} regime. On the other hand, the regime $1<\alpha<d$ and $\beta>1/d$
is typically referred to as the short-range (or \textit{subcritical}) regime, in which solutions are expected to exhibit linear scattering, namely, to behave asymptotically like solutions to the corresponding linear equation. At the scattering-critical level, however, linear scattering generally fails, and one instead expects a modified asymptotic behavior involving a nonlinear phase correction. We also discuss related subcritical scattering results in Appendix~\ref{sec:subcritical}. See Section~\ref{subsubsec:Dispersive
	PDE Approach} below for a more detailed discussion of the scattering behavior.

For mixed states, where the density matrix is trace class and may have either finite or infinite rank, the Cauchy problem for Hartree, Hartree--Fock, and Kohn--Sham equations with generalized interaction potentials has been extensively studied in the literature. In particular, global well-posedness for trace-class initial data was established in  \cite{Bove1974,Bove1976,Chadam1976,Chadam1975,Zagatti1992,Jerome2015,Sprengel2017}. Modified scattering, as well as the construction of modified wave operators, for the finite-rank Hartree--Fock equation with Coulomb interaction $W(x)=|x|^{-1}$ in dimensions $d\geq 2$ was established in \cite{Wada2002,Ikeda2012}. For the Kohn--Sham equation \Cref{eqn:KS-gamma}, small-data scattering in almost all subcritical cases was obtained in \cite{Pusateri2021}. More recently, modified scattering for the three-dimensional infinite-rank Hartree equation with Coulomb interaction was proved in \cite{Nguyen2024}. Asymptotic stability of the Hartree equation in the semiclassical regime was studied in \cite{Hadama2025,Hadama2025a}. For the one-dimensional Hartree--Fock equation with an even finite-measure potential, \cite{Maleze2025} observed a cancellation between the direct and exchange terms for plane waves, leading to scattering instead of modified scattering.

From a dynamical viewpoint, scattering phenomena may be regarded as a stability problem around the vacuum state. Beyond this perturbative regime, one may also study the stability and long-time behavior near nontrivial equilibria. In this direction, the Hartree equation admits important translation-invariant stationary states describing infinite quantum systems, and the dynamics around such steady states has been studied in \cite{Lewin2014,Lewin2015,Chen2017,Chen2018,Lewin2020,Collot2020,Collot2022,Hadama2023,Hadama2024,Hadama2025a,Hadama2025,You2024,Smith2024,Nguyen2025,Hadama2025b,Borie2025}.

\subsection{Main Result}\label{sec:main-result}

We now state our main theorem.
\begin{theorem}\label{thm:main1}
	
	Assume $d=2,3$. Let $m \in (d/2, \, 1+2/d)$ and assume that the initial density matrix $\gamma_1$ satisfies
	\[
	\| \langle \mathrm{J}(1) \rangle^m \gamma_1 \langle \mathrm{J}(1) \rangle^m \|_{\mathfrak{S}^1} + \| \rho_{\gamma_1}\|_{L^{\infty}} \le \epsilon_0.
	\]
	
	For sufficiently small $\epsilon_0>0$, the Cauchy problem \Cref{eqn:KS-gamma} has a unique global-in-time solution $\gamma \in C([1,\infty); \mathfrak{S}^1)$ with $|\mathrm{J}|^m \gamma |\mathrm{J}|^m \in C([1,\infty);\mathfrak{S}^1)$
	\footnote{$|\mathrm J_x (t)|^m= \ee^{\ii t\Delta_x/2}\, |x|^m \,\ee^{-\ii t\Delta_x/2}$ is the Heisenberg evolution of the position operator under the free Schr\"odinger flow. See Section~\ref{subsec:pseudo-conformal}.}
	and its associated density satisfies the time decay estimate
	\[
	\| \rho_{\gamma}(t) \|_{L^p_x} \lesssim \epsilon_0  t^{-d(1-1/p)}
	\]
	for $p \in [1,\infty]$ and $t \ge 1$.
	Moreover, there exist a unique $\gamma_{\infty} \in \mathfrak{S}^1$ and a real-valued function $g_{\infty}$ such that, for some $\delta' >0$,
	\[
	\| \gamma(t)- \ee^{-\ii(-t\Delta/2 + g_{\infty} (-\ii \nabla ) \log t)} \gamma_{\infty} \ee^{\ii (-t\Delta/2 + g_{\infty} (-\ii \nabla ) \log t)} \|_{\mathfrak{S}^1}
	\lesssim \epsilon_0  t ^{-\delta'}
	\]
	holds for all $t \ge 1$.
\end{theorem}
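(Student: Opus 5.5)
The plan is to reduce the operator equation to a Hilbert--Schmidt-valued Schrödinger-type equation for a ``square root'' and then run the scalar modified-scattering machinery (pseudo-conformal transform plus phase correction) in Schatten norms. Since $\gamma_1\ge 0$ is trace class, write $\gamma_1=\psi_1\psi_1^*$ with $\psi_1\in\mathfrak{S}^2$. We evolve $\psi(t)$ by $\ii\partial_t\psi=(-\tfrac12\Delta+V(\rho_{\psi\psi^*}))\psi$. Then $\gamma=\psi\psi^*$ solves \Cref{eqn:KS-gamma}, and $\rho_\gamma(x)=\int|\psi(x,y)|^2\,\dd y$. The weighted hypothesis becomes $\|\langle \mathrm{J}(1)\rangle^m\psi_1\|_{\mathfrak{S}^2}^2\lesssim\epsilon_0$. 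Write $\psi(t)=\mathcal{M}(t)\mathcal{D}(t)\widehat{w}(1/t)$ in the $x$-variable, where $\mathcal{M}$ is the quadratic phase $\ee^{\ii|x|^2/2t}$ and $\mathcal{D}$ is the $L^2$-dilation. Under this pseudo-conformal change of variables $\mathrm{J}(t)$ becomes $\ii\nabla$ acting on $w$. The equation for $w$ at time $s=1/t$ then has potential $s^{-1}\big(\lambda|\cdot|^{-1}*\rho_w+\mu\rho_w^{1/d}\big)$, which follows from the scaling $\alpha=1$, $\beta=1/d$. This $s^{-1}$ singularity at $s=0$ is the source of the logarithmic phase.

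\textbf{Step 1 (local theory and a priori bounds).} I would prove local well-posedness in the space $\psi$ with $\|\langle \mathrm{J}\rangle^m\psi\|_{\mathfrak{S}^2}<\infty$. Since $m>d/2$, a Sobolev-in-$x$ argument, applied kernelwise and then integrated in $y$, gives the key pointwise bound
\[
\rho_\gamma(t,x)\lesssim t^{-d}\,\|\langle \mathrm{J}(t)\rangle^m\psi\|_{\mathfrak{S}^2}^2 .
\]
This bound, together with mass conservation and interpolation, yields the stated $L^p$ decay once the weighted norm is controlled.

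\textbf{Step 2 (energy estimate).} Commuting $|\mathrm{J}|^m$ with the equation produces the commutator $[|\mathrm{J}|^m,V]$. In pseudo-conformal variables this is a fractional Leibniz/Kato--Ponce commutator $[|\nabla|^m,s^{-1}V(\rho_w)]$. It is bounded by $s^{-1}\|\nabla V\|_{L^\infty}$ plus $s^{-1}\||\nabla|^mV\|_{L^{2}}$-type terms. For the Hartree part this is routine. For $\rho^{1/d}$ the non-smoothness at $\rho=0$ is the issue, and the upper constraint $m<1+2/d$ is exactly what makes $|\nabla|^{m-1}\rho^{1/d}$ controllable via fractional chain rules for Hölder functions. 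This gives only $\frac{d}{dt}\|\cdot\|\lesssim t^{-1}\epsilon\|\cdot\|$, hence slow growth $t^{C\epsilon}$.

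\textbf{Step 3 (sup bound on the profile).} To close a bootstrap, I would control $\sup_\xi\int|\widehat{f}(t,\xi,y)|^2\,\dd y$ uniformly in $t$, where $f=\ee^{-\ii t\Delta/2}\psi$ is the profile. The idea is to derive an ODE in $\xi$, of the form
\[
\ii\partial_t\widehat f(t,\xi,\cdot)=t^{-1}\,g(t,\xi)\,\widehat f(t,\xi,\cdot)+O\big(t^{-1-\delta}\big),
\]
with real $g(t,\xi)=\lambda|\cdot|^{-1}*|\widehat f|^2_{\mathfrak{S}^2_y}+\mu|\widehat f|^{2/d}$, appropriately scaled. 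Realness of $g$ gives conservation of the pointwise $\mathfrak{S}^2_y$ mass, which closes the bootstrap with the growth loss absorbed by $t^{-\delta}$. I expect this step to be the main obstacle. The remainder estimate needs $\mathcal{M}(t)-1$ to gain $t^{-\delta}$ against the weighted norm, which uses $m>d/2$ sharply. It must also cope with the rough $\rho^{1/d}$, using Hölder continuity of $s\mapsto s^{1/d}$ and the $L^\infty$ smallness of $\rho_{\gamma_1}$.

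\textbf{Step 4 (modified scattering).} Define $B(t,\xi)=\int_1^t s^{-1}g(s,\xi)\,\dd s$. The integrating factor $\ee^{\ii B}\widehat f$ is then Cauchy in $L^\infty_\xi\mathfrak{S}^2_y$ and in $L^2$, with rate $t^{-\delta}$. The function $g(t,\xi)$ converges to some $g_\infty(\xi)$, and $B-g_\infty\log t$ converges as well. Absorbing the constant phase into the limit, the limits $\psi_\infty$ and $\gamma_\infty=\psi_\infty\psi_\infty^*$ give the $\mathfrak{S}^1$ asymptotics. This uses $\|AA^*-BB^*\|_{\mathfrak{S}^1}\le(\|A\|_{\mathfrak{S}^2}+\|B\|_{\mathfrak{S}^2})\|A-B\|_{\mathfrak{S}^2}$. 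Uniqueness of $\gamma$ comes from a difference estimate at the level of $\gamma$ through $\rho$, and uniqueness of $\gamma_\infty$ is immediate from the decay.
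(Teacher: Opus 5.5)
Your architecture is the paper's. You use the half-density $\psi$ with $\gamma=\psi\psi^*$, and the pseudo-conformal change of variables with time inversion, which produces the non-integrable factor $s^{-1}V$. A weighted (equivalently $H^m_xL^2_y$) norm is allowed to grow slowly. The profile obeys a pointwise ODE whose real phase preserves $\|\widehat f(t,\xi,\cdot)\|_{L^2_y}$ up to an integrable remainder. This yields the logarithmic correction $g_\infty\log t$, and $AA^*-BB^*=(A-B)A^*+B(A-B)^*$ passes the result to $\mathfrak{S}^1$.

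The gap is in Step 2, and it propagates into Step 3: your control of the exchange term in $H^m$ does not work. You separate $V_{\mathrm{xc}}=\mu\rho^{1/d}$ from $w$ and bound $[|\nabla|^m,V]w$ by a Kato--Ponce estimate, which needs $\nabla V\in L^\infty$ and $|\nabla|^mV\in L^2$. Neither is available for $V=\rho^{1/d}$. Take rank-one data $\gamma_1=|u\rangle\langle u|$ with $u$ real, smooth, and vanishing transversally on $\{x_1=0\}$. There $\rho^{1/d}\approx c|x_1|^{2/d}$, which lies in $H^s_{\mathrm{loc}}$ only for $s<\frac12+\frac2d$.
\begin{itemize}
\item For $d=3$ this threshold is $7/6<3/2<m$, and $\nabla\rho^{1/3}\sim|x_1|^{-1/3}$ is unbounded.
\item For $d=2$ the $\dot H^m$ norm is infinite once $m\ge 3/2$. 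Moreover, $\nabla\rho^{1/2}\in L^\infty$ would need $w$ Lipschitz, which $H^m$ with $m<2$ does not give.
\end{itemize}
So the right-hand side of your energy inequality is infinite already at the initial time, although the commutator itself is finite. Your observation that $|\nabla|^{m-1}\rho^{1/d}$ is controllable (Hölder chain rule, $m-1<2/d$) is true, but it is not what the commutator bound requires.

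Step 3 has the same dependence. The remainder generated by $\mathcal{M}-1$ (in the paper, $t^{-1}(\ee^{-\ii t\Delta/2}-1)V_\nu\nu$) is bounded in $L^\infty_xL^2_y$ only through an $H^{s+2\theta}_xL^2_y$ bound on the \emph{whole} nonlinearity, with $s>d/2\ge 1$. Hölder continuity of $\rho\mapsto\rho^{1/d}$ handles only the difference term $N(\nu)-N(\eta)$.

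The missing idea is never to separate $V_{\mathrm{xc}}$ from $w$. Write $V_{\mathrm{xc}}w=\mu F(w)$ with $F(\varphi)=\|\varphi\|_{L^2_y}^{2/d}\varphi$. As a map on the real Hilbert space $L^2_y$, $F$ is $C^{1,2/d}$, even though $\varphi\mapsto\|\varphi\|^{2/d}$ is merely $C^{0,2/d}$:
\[
\|DF(\varphi)\|_{\mathrm{op}}\lesssim\|\varphi\|^{2/d},\qquad \|DF(\varphi_1)-DF(\varphi_2)\|_{\mathrm{op}}\lesssim\|\varphi_1-\varphi_2\|^{2/d}.
\]
From this one proves the vector-valued estimate
\[
\|F(w)\|_{\dot H^m_xL^2_y}\lesssim\|w\|_{L^\infty_xL^2_y}^{2/d}\|w\|_{\dot H^m_xL^2_y}\qquad\text{for } m<1+2/d,
\]
which is where the upper bound on $m$ really comes from. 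The proof uses three ingredients:
\begin{itemize}
\item the second-difference Besov characterization of $\dot H^m$;
\item the expansion $F(w_+)+F(w_-)-2F(w)=DF(w)\Delta_h^2w+O\big(\sum_\pm\|w_\pm-w\|_{L^2_y}^{1+2/d}\big)$;
\item the interpolation $\|w\|_{\dot B^{m/r}_{2r,2r}}^{2r}\lesssim\|w\|_{\dot B^m_{2,2}}^2\|w\|_{L^\infty}^{2(r-1)}$ with $r=1+2/d$.
\end{itemize}
This must be done at the $L^2_y$-valued level; a scalar chain rule cannot be applied componentwise because $\rho$ couples all components.

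With this lemma no commutator is needed. In the pseudo-conformal frame the free flow commutes with $|\nabla|^m$, so Duhamel with the factor $s^{-1}$ gives the slow growth directly. The same lemma at regularity $s+2\theta\le m$ closes the remainder estimate in Step 3.
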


\Cref{thm:main1} establishes global well-posedness and modified scattering for small and localized solutions to \eqref{eqn:KS-gamma}. In particular, this resolves Conjecture 2 in \cite{Pusateri2021} in the physically relevant dimensions $d=2,3$ for scattering-critical Kohn--Sham equations. The subcritical regime corresponding to Conjecture 1 in \cite{Pusateri2021} is also resolved in Appendix~\ref{sec:subcritical}.
See Remark~\ref{remark:Higher dimensions} for a discussion on the higher-dimensional case.

\begin{remark}\phantom{}
	\begin{enumerate}[label=(\arabic*)]
		\item Since $\gamma$ is a nonnegative operator, its square root $\sqrt{\gamma}$ is well-defined. Moreover, one has
		\[
		\| \rho_{\gamma} \|_{L^{\infty}}^{1/2}
		=
		\| \sqrt{\gamma} \|_{L^{\infty}_x L^2_y},
		\]
		since
		\[
		\rho_{\gamma}(x) = \int |\sqrt{\gamma}(x,y)|^2 \, \dd y.
		\]
		Thus, the smallness assumption on the density can equivalently be expressed in terms of $\| \sqrt{\gamma_1} \|_{L^{\infty}_x L^2_y}$.
		\item An alternative formulation of the smallness assumption can be given in terms of weighted trace-class norms, which can be viewed as more natural from a physical perspective.
		More precisely, if the initial data satisfies
		\[
		\| \langle \mathrm{J}(1) \rangle^m \gamma_1 \langle \mathrm{J}(1) \rangle^m \|_{\mathfrak{S}^1} + \| \langle \nabla \rangle^{m} \gamma_1 \langle \nabla \rangle^m \|_{\mathfrak{S}^1} \le \epsilon_0,
		\]
		then the same conclusions as in Theorem~\ref{thm:main1} hold.
		Indeed, one has
		\[
		\| \sqrt{\gamma}(x,y) \|_{L^{\infty}_x L^2_{y}} \lesssim \| \sqrt{\gamma}(x,y) \|_{H^m_{x} L^2_{y}} \sim \| \langle \nabla \rangle^m \gamma \,\langle \nabla \rangle^m \|_{\mathfrak{S}^1}^{1/2}.
		\]
		Moreover, in this case, the propagation of $\| \langle \nabla \rangle^{m} \gamma(t) \langle \nabla \rangle^m \|_{\mathfrak{S}^1}$ is expected to hold but we omit the proof.
		\item The choice of the initial time at $t=1$ is made for notational convenience in the analysis of long-time behavior, in particular to simplify the expressions arising in the computations. The same result can be obtained for initial data posed at $t=t_0 \ (t_0>0)$ with only minor modifications.
		
		For initial data given at $t=0$, the same result can be obtained under the additional regularity assumption
		\[
		\| \langle x \rangle^m \gamma_{t=0} \langle x \rangle^m \|_{\mathfrak{S}^1} + \| \langle \nabla \rangle^{m} \gamma_{t=0} \langle \nabla \rangle^m \|_{\mathfrak{S}^1} \le \epsilon_0,
		\]
		which is the same assumption as in \cite{Pusateri2021}.
		However, the additional regularity assumption can be removed by establishing persistence properties observed by Nahas and Ponce in \cite{NahasPonce2009}, which would require the use of Strichartz estimates.
		
		\item The time decay of the density is optimal in the sense that it coincides with the decay rate of the corresponding linear solution. Moreover, it agrees with the optimal decay rate in the rank-one case.
		\item The function $g_\infty$ arises from the leading-order long-range interaction in the scattering-critical regime. More precisely, the critical nonlinear interaction produces a non-integrable contribution of order $t^{-1}$, whose time integration yields a logarithmic phase correction. The resulting logarithmic phase correction is consistent with the long-range asymptotics previously established in the rank-one case. The function $g_\infty$ is obtained by extracting this leading contribution through an ODE-type asymptotic analysis; see Section~\ref{subsubsec:Dispersive PDE Approach} for a heuristic derivation in the rank-one case.
	\end{enumerate}
\end{remark}

\subsection{Strategy of Proof}
\subsubsection{Half-Density Kernel Formulation}
One of the main ideas is to factorize the density matrix by introducing a ``half-density'' $\kappa \in \mathfrak{S}^2$ such that
$$\gamma = \kappa \kappa^{\ast}.$$
We consider the following evolution equation for $\kappa$:
\begin{equation}
	\label{eqn:KS-kappa}
	\begin{cases}
		\begin{aligned}
			&\ii \partial_{t} \kappa = \left(- \frac{1}{2}\Delta + V(\rho_{\kappa\kappa^{\ast}})\right) \kappa, \\
			&\kappa\vert_{t=1} = \kappa_1.
		\end{aligned}
	\end{cases}
\end{equation}
More precisely, for given initial data $\gamma_1$, we choose $\kappa_1$ satisfying $\gamma_1=\kappa_1\kappa_1^*$. Then, one readily verifies that a solution $\kappa(t)$ to \eqref{eqn:KS-kappa} with such initial data generates, at least formally, a solution to \eqref{eqn:KS-gamma} (see the proof of Theorem~\ref{thm:main1} for rigorous justification).

The main advantage of this formulation is that the evolution no longer involves a commutator structure, so that the Hamiltonian $H_{\gamma} = -\Delta/2 + V(\rho_{\kappa\kappa^{\ast}})$ acts only on the $x$-variable of the kernel $\kappa(t,x,y)$.
In particular, \eqref{eqn:KS-kappa} can be written explicitly in terms of its kernel variables as follows:
\begin{equation}\label{eqn:KS-kappa-kernel}
	\begin{cases}
		\begin{aligned}
			&\ii \partial_t \kappa(t,x,y) = -\frac{1}{2} \Delta_x \kappa(t,x,y) + V(\rho_{\kappa\kappa^{\ast}})(t,x) \kappa(t,x,y), \\
			& \kappa\vert_{t=1}(x,y) = \kappa_1(x,y).
		\end{aligned}
	\end{cases}
\end{equation}
This formulation makes the underlying PDE structure more transparent.
In what follows, we focus on the analysis of this half-density equation \eqref{eqn:KS-kappa}.

\begin{remark}
	Due to gauge freedom, this factorization is inherently non-unique. In particular, one possible choice for $\tilde\kappa$ is an operator governed by the following von Neumann equation
	\begin{equation}\label{von Neumann equation}
		\ii \partial_t \tilde\kappa = \left[H_{\gamma} , \, \tilde\kappa\right],
	\end{equation}
	which was the formulation adopted in \cite{Pusateri2021}. While this approach preserves the commutator structure and may appear more natural, our formulation is more amenable to analysis within the framework of nonlinear PDEs.
\end{remark}

Compared to the von Neumann formulation \eqref{von Neumann equation}, our approach allows us to handle the low-regularity structure of $V_{\mathrm{xc}}$ more directly by estimating the nonlinear term in Sobolev spaces with fractional regularity; see Section~\ref{sec:bounds-nonlinear}.

In our analysis, we take the $L^2$-norm of $\kappa(t,x,y)$ with respect to the $y$-variable. As a consequence, the key quantity we analyze is the density as a function of a single spatial variable \[ \rho_{\kappa\kappa^*}(t,x) = \int_{\mathbb{R}^d} |\kappa(t,x,y)|^2 \, \mathrm{d}y, \] which coincides with $\rho_\gamma$. While the equation is posed at the level of $\kappa$, our analysis primarily focuses on the associated density $\rho_{\kappa\kappa^*}$.

\begin{remark}\label{remark:L2valuedsetting}
	This formulation can be viewed as a Schrödinger-type equation. Indeed, by identifying
	$\kappa(t,x,y)$ with an $L^2$-valued function
	$\mathcal U(t):\mathbb{R}^d \rightarrow L^2(\mathbb{R}^d)$ via  $$\mathcal U(t,x):=\kappa(t,x,\cdot) \in L^2(\mathbb{R}^d),$$
	and rewriting the operator-valued equation \eqref{eqn:KS-kappa} in terms of the $L^2$-valued function $\mathcal{U}$, we arrive at the Schr\"{o}dinger-type equation
	\begin{equation}\label{eqn:KS-u}
		\begin{cases}
			\begin{aligned}
				&\ii \partial_t \, \mathcal U = -\frac{1}{2} \Delta \, \mathcal U + V(|\mathcal U|^2) \, \mathcal U, \\
				&\mathcal U\vert_{t=1} = \mathcal U_1,
			\end{aligned}
		\end{cases}
	\end{equation}
	where
	\[
	\rho_{\kappa \kappa^{\ast}}(t,x) = \int |\kappa(t,x,y)|^2 \, \dd y
	=\|\mathcal U(t,x)\|_{L^2}^2=:|\mathcal U|^2.
	\]
	In the rank-one case, this definition of $|\mathcal{U}|^2$ coincides with the usual modulus squared. This viewpoint allows us to reinterpret the kernel $\kappa$ as an $L^2$-valued function and to transfer analytical techniques from scalar nonlinear Schrödinger equations to the present operator-valued setting.
\end{remark}

\begin{remark}\label{rmk on approach via infite system}
An alternative formulation, considered for instance in \cite{Sabin2014}, is obtained by reducing the operator equation to an infinite coupled system of scalar equations through spectral decomposition. Indeed, since $\gamma_1$ is a nonnegative trace-class operator, one may write
\begin{equation}
    \gamma_1
    =
    \sum_{j=1}^\infty
    a_j
    |u_{1,j}\rangle
    \langle u_{1,j}|,
\end{equation}
where $\{u_{1,j}\}_{j=1}^\infty$ is an orthonormal system in $L^2(\mathbb{R}^d)$ and $\{a_j\}_{j=1}^\infty \subset \mathbb{R}^+\cup\{0\}$ satisfies
\[
    \sum_{j=1}^\infty a_j < \infty.
\]
Then \eqref{eqn:KS-gamma} is formally equivalent to the following infinite coupled system: 
\footnote{A direct componentwise application of the argument in the pure state case is not available for the coupled system. For instance, the low-regularity Sobolev estimate
\[
    \left\| |u|^{2/d} u \right\|_{\dot{H}^m}
    \lesssim
    \| u \|_{L^{\infty}}^{2/d} \|u \|_{\dot{H}^m}, \qquad m< 1+ \frac{2}{d}
\]
in the pure state case uses the fact that $z \mapsto |z|^{2/d} z$ is a $C^{1,2/d}$ map. 
In the coupled system \eqref{eq:coupled-system}, however, the nonlinear interaction has the mixed structure
\[
   \left( \sum_{k=1}^{\infty} a_k |u_k|^2 \right)^{1/d} u_j,
\]
for which the corresponding estimate is not directly available, since derivatives may fall on the density, which involves all the components.}
\begin{equation}\label{eq:coupled-system}
   \ii \partial_t u_j=
   -\frac12\Delta u_j
   +
   V\Bigl(\sum_{k=1}^\infty a_k |u_k|^2\Bigr)u_j, \qquad j=1,2,\cdots.
\end{equation}
Introducing the $\ell^2$-valued function $\mathbf{u}(t):\mathbb{R}^d\rightarrow \ell^2(\mathbb{N})$ defined by 
$$[\mathbf{u}(t,x)]_j=\sqrt{a_j}\,u_j(t,x), \quad   j=1,2,\cdots,$$
the coupled system \eqref{eq:coupled-system} can be rewritten as
\begin{equation}\label{eq:vectorvaluedNLS}
   \ii \partial_t \mathbf{u} = -\frac12 \Delta \mathbf{u} + V(\|\mathbf{u}\|_{\ell^2}^2)\mathbf{u} 
\end{equation}
where 
$$ \|\mathbf{u}\|_{\ell^2}^2 := \|\mathbf{u}(t,x)\|_{\ell^2}^2=\sum_{j=1}^\infty a_j|u_j(t,x)|^2.$$
Since the orthogonality of the system is preserved by the evolution, the corresponding solution to \eqref{eqn:KS-gamma} is represented by
\[
    \gamma(t)
    =
    \sum_{j=1}^\infty
    a_j |u_j(t)\rangle\langle u_j(t)|,
\]
and hence
\[
    \rho_\gamma(t,x)
    =
    \|\mathbf{u}(t,x)\|_{\ell^2}^2.
\]

The half-density formulation used in the present paper can be identified with the above $\ell^2$-valued formulation, provided that the corresponding function spaces are chosen consistently. Indeed, letting $\{e_j\}_{j=1}^\infty$ be an orthonormal basis of $L^2(\mathbb{R}^d)$, we may write
\[
    \kappa(t,x,y) = \sum_{j=1}^{\infty} \sqrt{a_j} u_j(t,x) \overline{e_j(y)}.
\]
Then 
\[
    \|\kappa(t,x,\cdot)\|_{L^2_y} = \| \mathbf{u} (t,x) \|_{\ell^2}.
\]
More generally, we take the relevant function spaces to be $L^p_x (\ell^2)$ and $\dot{H}^m_x (\ell^2)$, defined by
\[
    \| \mathbf{u} \|_{L^p_x (\ell^2)}^p
    =
        \int_{\RR^d} 
        \left(
        \sum_{j=1}^{\infty} a_j |u_j(x)|^2
        \right)^{p/2} \, \dd x,
       \qquad
    \| \mathbf{u} \|_{\dot{H}^m_x (\ell^2)}^2 
    =
   \sum_{j=1}^{\infty} a_j \| u_j \|_{\dot{H}^m_x}^2,
\]
so that 
\[
    \| \kappa(t) \|_{L^p_x L^2_y}
    =
    \|\mathbf{u}(t) \|_{L^p_x (\ell^2)},
    \qquad
    \|\kappa(t) \|_{\dot{H}^m_x L^2_y}
    =
    \| \mathbf{u}(t) \|_{\dot{H}^m_x (\ell^2)}.
\]
This differs from the $\ell^2(L_x^p)$-based framework used, for instance, in \cite{BFF2026,Zagatti1992}.

We adopt the half-density formulation because, while being naturally equivalent to the above $\ell^2$-valued formulation, it provides a basis-independent description of mixed states and allows the operator equation to be treated directly using a more compact notation, without explicitly passing through a spectral decomposition or carrying infinitely many indices and weighted sequence norms.
\end{remark}

We now state the corresponding version of Theorem~\ref{thm:main1} in terms of $\kappa$.
\begin{theorem}\label{thm:main2}
	Assume $d=2,3$. Let $m \in (d/2, 1+2/d)$ and assume that the initial data $\kappa_1$ satisfies
	\[
	\| \langle \mathrm{J}_x(1) \rangle^m  \kappa_1 \|_{L^2_{x,y}} + \|  \kappa_1 \|_{L^\infty_x L^2_y} \le \epsilon_0.
	\]
	For sufficiently small $\epsilon_0>0$, the Cauchy problem \Cref{eqn:KS-kappa} has a unique global-in-time solution $\kappa \in C([1,\infty);\mathfrak{S}^2)$ with $|\mathrm{J}_x|^m \kappa \in C([1,\infty);\mathfrak{S}^2)$ satisfying
	the time decay estimate
	\begin{equation}\label{est:timedecay}
		\| \kappa(t) \|_{L^p_x L^2_y} \lesssim \epsilon_0  t^{-d(1/2-1/p)}
	\end{equation}
	for $p \in [2,\infty]$ and $t \ge 1$.
	Moreover, there exist $\kappa_{\infty} \in \mathfrak{S}^2$
	and a real-valued function $g_{\infty}$ such that, for some $\delta' >0$,
	\[
	\| \kappa(t) - \ee^{-\ii(-t\Delta/2 + g_{\infty} (-\ii \nabla ) \log t)} \kappa_{\infty}  \|_{\mathfrak{S}^2}
	\lesssim \epsilon_0 t^{-\delta'}
	\]
	holds for all $t \ge 1$.
\end{theorem}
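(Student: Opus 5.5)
We treat \eqref{eqn:KS-kappa-kernel} as an $L^2_y$-valued NLS, following Remark~\ref{remark:L2valuedsetting}, and adapt the Hayashi--Naumkin / Kato--Pusateri approach for the scalar critical Hartree and NLS equations. Every operation acts only in $x$. All norms are taken in the mixed spaces $L^p_xL^2_y$ and $\dot H^m_xL^2_y$, which by Remark~\ref{rmk on approach via infite system} are equivalently $\ell^2$-valued.

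\textbf{Setup: profile, bootstrap norm, and decay.} Set the profile $f(t)=\ee^{-\ii t\Delta_x/2}\kappa(t)$. Use the factorization $\ee^{\ii t\Delta/2}=M(t)D(t)\mathcal F M(t)$, where $M$ is multiplication by $\ee^{\ii|x|^2/2t}$ and $D$ is the $L^2$-dilation. The pseudo-conformal identity then gives $\mathrm J(t)=M(t)(\ii t\nabla)M(-t)$, so $\||\mathrm J|^m\kappa\|_{L^2_{x,y}}=t^m\||\nabla|^m (M(-t)\kappa)\|_{L^2_{x,y}}$.

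We close a bootstrap for the norm
\[
X(T)=\sup_{1\le t\le T}\Big(t^{-\delta}\|\langle \mathrm J(t)\rangle^m\kappa(t)\|_{L^2_{x,y}}+\|\widehat f(t)\|_{L^\infty_\xi L^2_y}\Big),
\]
with a small $\delta>0$. The decay estimate \eqref{est:timedecay} follows from the decomposition
\[
\kappa(t)=M D\,\widehat f+M D\mathcal F(M-1)f .
\]
Since $m>d/2$, Sobolev embedding applies, and $|M-1|\lesssim (|x|^2/t)^{\theta}$ yields a remainder of size $t^{-d/2-\theta'}\|\langle\mathrm J\rangle^m\kappa\|_{L^2}$. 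The main term gives the sharp bound $t^{-d/2}\|\widehat f\|_{L^\infty_\xi L^2_y}$. Interpolating with $L^2$ conservation gives all $p\in[2,\infty]$. Local well-posedness in $\langle\mathrm J\rangle^{-m}L^2_{x,y}$ comes from a contraction argument using the weighted energy estimates below, on short intervals.

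\textbf{Weighted energy estimate.} $\mathrm J$ commutes with $\ii\partial_t+\Delta/2$, and $M(-t)$ conjugates the nonlinearity. We therefore need to bound
\[
t^m\big\||\nabla|^m\big(V(\rho)\,M(-t)\kappa\big)\big\|_{L^2_{x,y}}.
\]
Write $v=M(-t)\kappa$, so that $\rho=\|v\|_{L^2_y}^2$.

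For the Hartree part we use the fractional Leibniz rule together with $\|\,|x|^{-1}*\rho\|_{L^\infty}\lesssim\|\rho\|_{L^1}^{1/2}\|\rho\|_{L^\infty}^{1/2}\lesssim \epsilon^2 t^{-1}$. This gives a derivative bound of order $t^{-1}$ times $\||\mathrm J|^m\kappa\|$, with higher-order pieces decaying faster.

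For $\rho^{1/d}v$, the map $z\mapsto \|z\|^{2/d}z$ on the Hilbert space $L^2_y$ is $C^{1,2/d}$ away from and at $0$. This holds because the norm is smooth off the origin, with homogeneous estimates. So the vector-valued fractional chain rule (Christ--Weinstein/Visan type) is valid for $m<1+2/d$ and gives
\[
\||\nabla|^m(\rho^{1/d}v)\|_{L^2_xL^2_y}\lesssim\|v\|_{L^\infty_xL^2_y}^{2/d}\||\nabla|^m v\|_{L^2_{x,y}} .
\]
This is the point of the half-density formulation, which avoids the mixed-component issue noted in the footnote. Both nonlinear terms are thus $\lesssim \epsilon^{2/d}$ or $\epsilon^2$ times $t^{-1}$ times the weighted norm. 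Gronwall then yields growth $t^{C\epsilon^{2/d}}\le t^\delta$.

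\textbf{Main obstacle: the $L^\infty_\xi L^2_y$ bound and the phase.} From Duhamel, $\ii\partial_t\widehat f(\xi)=\mathcal F\ee^{-\ii t\Delta/2}[V\kappa]$. Inserting the decomposition above, the leading term is
\[
\ii\partial_t\widehat f(t,\xi,y)=t^{-1}\Big(\lambda\,|\cdot|^{-1}*\|\widehat f\|_{L^2_y}^2\,(\xi)+\mu\|\widehat f(\xi)\|_{L^2_y}^{2/d}\Big)\widehat f(t,\xi,y)+R(t,\xi,y).
\]
This uses the scaling $t^{-1}$ exactly at criticality, since $\rho(t,x)\approx t^{-d}\|\widehat f(x/t)\|^2$ and $\rho^{1/d}\approx t^{-1}\|\widehat f\|^{2/d}$. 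The remainder satisfies $\|R\|_{L^\infty_\xi L^2_y}\lesssim\epsilon^{1+2/d}t^{-1-\delta''}$. It is controlled by $|M-1|$ gains against $\langle \mathrm J\rangle^m$; this is the delicate step, because $\rho^{1/d}$ is only Hölder, and we handle it with the Hölder estimate $|a^{1/d}-b^{1/d}|\le|a-b|^{1/d}$.

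The crucial structure is that the coefficient $\Phi(t,\xi)$ multiplying $\widehat f$ is real and independent of $y$. Setting $w=\ee^{\ii\int_1^t\Phi(s)\,ds/s}\widehat f$, we get $\partial_t\|\widehat f(\xi)\|_{L^2_y}$ integrable, which closes the bootstrap. The limits $\Phi(t,\xi)\to g_\infty(\xi)$ (since $\|\widehat f\|_{L^2_y}$ converges, at rate $t^{-\delta''}$) and $w\to W_\infty$ then exist. Defining $\kappa_\infty=\mathcal F^{-1}W_\infty$, we transfer the $L^\infty_\xi$ convergence, plus interpolation with the $\langle\mathrm J\rangle^m$ bound, into $L^2_{x,y}$ convergence at rate $t^{-\delta'}$. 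The main delicate point is the $L^2$ (rather than $L^\infty$) convergence, together with the Hölder nonlinearity in $R$.
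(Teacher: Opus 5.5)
Your architecture matches the paper's, carried out in the original variables instead of after the pseudo-conformal inversion. The correspondences are:
\begin{itemize}
\item the paper's profile $\eta(t)=\ee^{-\ii t\Delta_x/2}\nu(t)$ satisfies $\overline{\eta(1/t)}=\widehat f(t)$;
\item its modified profile $\widetilde\eta=\ee^{-\ii\Psi}\eta$ is, up to conjugation and time inversion, your $w$;
\item its $H^m_xL^2_y$ bound on $\nu$ is your $\langle\mathrm J\rangle^m$ bound;
\item Lemma~\ref{lem:powertype} is exactly the $L^2_y$-valued chain rule you invoke. The paper proves it via second differences in $\dot B^m_{2,2}$ and real interpolation, using only the $C^{1,2/d}$ property of $\varphi\mapsto\|\varphi\|_{L^2_y}^{2/d}\varphi$.
\end{itemize}

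\textbf{The gap.} The step that fails is your last one. You bound $R$ only in $L^\infty_\xi L^2_y$. You then propose to upgrade the $L^\infty_\xi$ convergence of $w(t)$ to $L^2_{\xi,y}$ convergence by interpolating with the $\langle\mathrm J\rangle^m$ bound.

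This cannot work. Up to constants, $\|\langle\mathrm J(t)\rangle^m\kappa(t)\|_{L^2_{x,y}}=\|\langle\nabla_\xi\rangle^m\widehat f(t)\|_{L^2_{\xi,y}}$, which controls regularity in $\xi$, not decay in $\xi$. No interpolation of $L^\infty_\xi$ with $H^m_\xi$ reaches $L^2_\xi$. For a counterexample, take $g_L(\xi)=L^{-d/2}\phi(\xi/L)$ times a fixed unit vector of $L^2_y$. As $L\to\infty$ it has $\|g_L\|_{L^\infty}\to0$ and $\|g_L\|_{H^m}$ bounded, while $\|g_L\|_{L^2}$ stays constant. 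So, as written, you obtain neither convergence in $\mathfrak S^2$ nor its rate.

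\textbf{The repair.} This is what the paper does in Lemma~\ref{lem:remainder2}: estimate $R$ in $(L^2_\xi\cap L^\infty_\xi)L^2_y$ from the start. The $L^2$ part is actually easier than the $L^\infty$ part.
\begin{itemize}
\item The $(M-1)$-type piece gains a power $t^{-\theta}$ against $\||\nabla_\xi|^{2\theta}(\cdot)\|_{L^2}$, with no Sobolev embedding needed.
\item The difference $V(\rho_w)w-V(\rho_{\widehat f})\widehat f$ is handled by the $L^\infty$ bounds on the potentials together with the Lipschitz bound $\|F(\varphi)-F(\psi)\|_{L^2_y}\lesssim(\|\varphi\|_{L^2_y}^{2/d}+\|\psi\|_{L^2_y}^{2/d})\|\varphi-\psi\|_{L^2_y}$.
\end{itemize}
Then $w(t)$ is Cauchy in both norms at an integrable rate, which gives the $\mathfrak S^2$ rate directly.

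\textbf{Two smaller points.}
\begin{enumerate}
\item Your definition $\kappa_\infty=\mathcal F^{-1}W_\infty$ omits a constant phase. Since $\int_1^t\Phi(s)\,\dd s/s=g_\infty\log t+\int_1^t(\Phi(s)-g_\infty)\,\dd s/s$, you need $\kappa_\infty=\mathcal F^{-1}[\ee^{-\ii\Psi^r_\infty}W_\infty]$ with $\Psi^r_\infty=\int_1^\infty(\Phi(s)-g_\infty)\,\dd s/s$, as in \eqref{eqn:Psi-decomp}. Otherwise $\kappa(t)-\ee^{\ii t\Delta/2}\ee^{-\ii g_\infty(-\ii\nabla)\log t}\kappa_\infty$ does not tend to zero. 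The $L^\infty_\xi$ convergence of this integral follows from two facts: $\||\cdot|^{-1}*g\|_{L^\infty}\lesssim\|g\|_{L^1}^{(d-1)/d}\|g\|_{L^\infty}^{1/d}$ with $L^1$-boundedness of the densities, and the H\"older bound for $a\mapsto a^{1/d}$.
\item For local well-posedness, a contraction in the weighted norm itself is not available when the nonlinearity is only $C^{1,2/d}$ and $m>1$: differences are then only H\"older in $\dot H^m$. Run the contraction in the $L^2_{x,y}$ metric on a ball of the weighted space, as in Appendix~\ref{sec:LWP}.
\end{enumerate}
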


\begin{remark}\phantom{ }
	\begin{enumerate}[label=(\arabic*)]
		\item The propagator with the phase modification acts only on the $x$-variable.
		\item Functions in $L^2_{x,y}$ can be identified with Hilbert--Schmidt operators.
		\item The time decay estimates in  \Cref{est:timedecay} for $\kappa(t)$ are optimal in the sense that they coincide with the decay rate of solutions to the free Schr\"{o}dinger equation.
	\end{enumerate}
\end{remark}

The proof of \Cref{thm:main1} follows from \Cref{thm:main2} through the relation $\gamma = \kappa \kappa^{\ast}$. The details are provided in \Cref{sec:proof-main1}.
The rest of the paper is devoted to the proof of \Cref{thm:main2}.

\subsubsection{Dispersive PDE Approach}\label{subsubsec:Dispersive PDE Approach}

Once the half-density formulation \eqref{eqn:KS-kappa} is derived, the equation exhibits a structure similar to that of nonlinear Schrödinger equations, as pointed out in Remark~\ref{remark:L2valuedsetting}. In particular, the dispersive dynamics acts only on one spatial variable, while the remaining variable plays the role of a parameter. This allows us to use a dispersive PDE structure in the original operator-valued equation.

To explain the dispersive mechanism behind our analysis, let us first consider the pure-state (rank-one) setting
\[
\gamma(t)=|u(t)\rangle\langle u(t)|
\]
for some function $u:\mathbb{R}^d\to\mathbb{C}$. In this case, the equation \Cref{eqn:KS-gamma} with the generalized potential \eqref{generalized potential} reduces to the nonlinear Schrödinger equation
\begin{align} \label{NLS}
	\ii\partial_t u
	= -\frac12 \Delta u
	+ \lambda \bigl(|x|^{-\alpha} * |u|^2\bigr)u
	+ \mu |u|^{2\beta}u.
\end{align}

The global existence and asymptotic behavior of solutions to \eqref{NLS} have been extensively studied in the dispersive PDE literature. Here, we focus on the scattering behavior of sufficiently small solutions. By scattering in the Sobolev space $H^s(\mathbb{R}^d)$, we mean that there exists an asymptotic state $u_+\in H^s(\mathbb{R}^d)$ such that
\[
\|
\ee^{-\ii t\Delta/2}u(t)-u_+
\|_{H^s(\mathbb{R}^d)}
\to0
\qquad
\text{as }
t\to\infty.
\]

To analyze the asymptotic dynamics, we introduce the profile
\[
f(t):=\ee^{-\ii t\Delta/2}u(t).
\]
Using the Fourier representation together with the stationary phase method, one obtains the following asymptotic expansion as $t\to\infty$:
\begin{align*}
	u(t,x)
	&=
	\frac{1}{(2\pi)^d}
	\int_{\mathbb{R}^d}
	\ee^{\ii x\cdot\xi}
	\ee^{-\ii t|\xi|^2/2}
	\widehat{f}(t,\xi)
	\,\dd\xi
	\\
	&=
	(2\pi \ii t)^{-d/2}
	\ee^{\ii |x|^2/(2t)}
	\widehat{f}\left(t,\frac{x}{t}\right)
	+
	\text{lower-order terms}.
\end{align*}
Substituting this asymptotic expansion into the nonlinear equation and applying the same asymptotic analysis to the nonlinear terms, one formally obtains the schematic leading-order profile equation
\begin{align*}
	\ii \partial_t \widehat{f}(t,\xi)
	&=
	\frac{1}{(2\pi)^d}
	t^{-\alpha}
	\widetilde{V}_{\mathrm{H}}(|\widehat{f}(t,\xi)|^2)
	\widehat{f}(t,\xi)
	\\
	&\quad
	+
	\frac{1}{(2\pi)^{d\beta}}
	t^{-d\beta}
	\widetilde{V}_{\mathrm{xc}}(|\widehat{f}(t,\xi)|^2)
	\widehat{f}(t,\xi)
	+
	\text{lower-order terms}.
\end{align*}
Consequently, the leading-order contribution is expected to behave like
\begin{align*}
	\widehat{f}(t,\xi)
	=
	\widehat{f}(1,\xi)
	&-\ii
	\int_1^t
	\Big(\frac{1}{(2\pi)^d}
	s^{-\alpha}
	\widetilde{V}_{\mathrm{H}}(|\widehat{f}(s,\xi)|^2)
	+
	\frac{1}{(2\pi)^{d\beta}}s^{-d\beta}
	\widetilde{V}_{\mathrm{xc}}(|\widehat{f}(s,\xi)|^2)
	\Big)
	\widehat{f}(s,\xi)
	\,\dd s \\
	&\;\;+
	\text{ lower-order terms.}
\end{align*}

This heuristic computation suggests that the asymptotic behavior is determined by the integrability of the factors $s^{-\alpha}$ and $s^{-d\beta}$. In particular, one expects scattering whenever
\[
\alpha>1
\quad\text{and}\quad
d\beta>1,
\]
while the borderline case
\[
\alpha=1
\quad\text{and}\quad
d\beta=1,
\]
corresponds to the scattering-critical regime considered in the present paper. In this case, the leading-order integral diverges logarithmically, and one expects a nonlinear logarithmic phase correction. More precisely, the modified profile is expected to converge after removing the long-range phase contribution
\[
\Big\| \ee^{-\ii
	\frac{1}{(2\pi)^d}\int_1^t
	\big(
	V_{\mathrm{H}}(|\widehat{u}(s,-\ii\nabla)|^2)
	+
	V_{\mathrm{xc}}(|\widehat{u}(s,-\ii\nabla)|^2)
	\big)
	\frac{\dd s}{s}}\ee^{-\ii t\Delta/2}u(t) - u_+\Big\|_{H^s(\mathbb{R}^d)}  \to 0
\quad
\text{as }
t\to\infty.
\]

These thresholds are indeed known to be sharp in the scalar setting. For the power-type nonlinear Schr\"odinger equation $(\lambda=0)$, scattering to free solutions is known to occur when $\beta>1/d$ under suitable assumptions on the initial data, while scattering fails for $\beta\le1/d$; see for instance \cite{Barab1991,Tsutsumi1984}. In the critical case \(\beta=1/d\), modified scattering was established in the pioneering works \cite{Ozawa1991,Ginibre1994}, and later refined by Hayashi--Naumkin \cite{Hayashi1998a}.

Similarly, for the Hartree equation (\(\mu=0\)), modified scattering occurs at the threshold \(\alpha=1\); see \cite{Hayashi1998a,Hayashi1998}. Moreover, in the Hartree setting, modified scattering has also been established below the critical threshold, namely for \(1/2<\alpha<1\); see \cite{HayashiNaumkin1998,HayashiNaumkin2001,Nakanishi2002a,Nakanishi2002b,Ginibre2014,Ginibre2015}. We also refer to \cite{Kato1987,Cazenave1990} for the well-posedness theory and to \cite{Kato2011,Ifrim2015,Hoose2025,Murphy2021} for further developments on long-range scattering and modified scattering for nonlinear Schr\"odinger equations.

Several approaches have been developed to rigorously justify such modified scattering behavior, including the factorization method of Hayashi--Naumkin \cite{Hayashi1998a}, the space-time resonance method of Kato--Pusateri \cite{Pusateri2013}, and the wave-packet method of Ifrim--Tataru \cite{Ifrim2015}; see for instance the survey article \cite{Murphy2021}. The common feature of these approaches is the combination of weighted energy estimates and an ODE-type asymptotic analysis, which together identify the leading long-range contribution and incorporate it into a nonlinear phase correction.

The main difficulty in the present problem is to adapt such dispersive PDE techniques to the operator-valued setting. One possible approach, considered for instance in \cite{Sabin2014},
is to reduce the operator equation to an infinite coupled system through spectral decomposition. In contrast, our approach works directly at the operator level through the half-density formulation.

Our strategy is motivated by the work of Pusateri--Sigal \cite{Pusateri2021}, where the half-density formulation was first used to study small-data scattering for operator-valued equations.
In particular, they proved linear scattering for \eqref{eqn:KS-gamma} with generalized potential $\widetilde{V}$ in \eqref{generalized potential} in the short-range regime
\[
\alpha>1
\quad\text{and}\quad
\beta>\frac{1}{\min(d,2)},
\]
for $d=2,3$. Their argument directly treated the operator equation without reducing it to an infinite coupled system, but only partial subcritical regimes were covered. In particular, the remaining subcritical regimes when $d=3$ and the critical regime were left as open problems. One of the main limitations of their approach is that the weighted energy argument relies on integer weights.

The fourth author of the present paper previously provided a partial answer to the conjectures in \cite{Pusateri2021} by proving modified scattering for \eqref{eqn:KS-gamma} in the three-dimensional Hartree case $(\mu=0)$ \cite{You2024}.
That work also treated the operator equation directly, but used a different approach based on the space-time resonance method combined with the Hayashi--Naumkin argument, without relying on the half-density formulation. However, the space-time resonance argument is not well suited for power nonlinearities with non-integer exponents.

From a physical viewpoint, however, one naturally expects both the Hartree-type and power-type nonlinearities to appear simultaneously.
The main novelty of the present paper is to combine the half-density formulation with dispersive PDE techniques in a unified framework capable of treating both classes of nonlinearities simultaneously. This framework enables us to resolve the remaining conjectures proposed in \cite{Pusateri2021}, including both the critical modified scattering regime and the remaining subcritical regimes; see Appendix~\ref{sec:subcritical} for the latter.
Our formulation removes the commutator structure and reduces the problem to a dispersive equation acting on a single spatial variable, allowing us to implement scalar dispersive PDE arguments at the operator level.

Another key ingredient of our approach is the use of the pseudo-conformal transformation.
In the classical Hayashi--Naumkin framework, one typically assumes both weighted norms and Sobolev regularity, for instance
\[
\|
\langle x\rangle^m u_1
\|_{L^2}
+
\|u_1\|_{H^m}
<
\infty,
\qquad
m>\frac{d}{2}.
\]
In our pseudo-conformal framework, the Sobolev regularity assumption can be weakened to suitable $L^\infty$-control\footnote{Such a relaxation of Sobolev regularity assumptions plays an important role in the construction of scattering operators; see, for instance, \cite{Carles2001, Carles2025, Hayashi2006, Kawamoto2025, Nakanishi2002a}.}, while the weighted assumption is still required.   After applying the pseudo-conformal transformation, the weighted condition is converted into a Sobolev regularity condition for the transformed unknown. Schematically,
\[
\|
\langle \mathrm{J}_x (t)\rangle^m u
\|_{L^2}
\quad
\longleftrightarrow
\quad
\|
\langle\nabla\rangle^m v
\|_{L^2},
\]
for arbitrary real $m\ge0$, where $v$ is the transformed function corresponding to $u$; see \Cref{subsec:pseudo-conformal} for the corresponding operator-valued formulation. 
Thus, in the transformed equation, the weighted estimate is replaced by a fractional Sobolev estimate, while the remaining pointwise control is encoded through an $L^\infty$-bound. As a result, the analysis can be carried out essentially within the two spaces $H^m$ and $L^\infty$.

This viewpoint is particularly advantageous in the operator-valued setting.
Indeed, if one attempts to work directly with weighted estimates, additional difficulties related to commutator structures may arise.
By contrast, after the pseudo-conformal transformation, the main task reduces to establishing suitable Sobolev-type nonlinear estimates of the form
\begin{equation}\label{energy estimates}
	\|
	|v|^{2\beta}v
	\|_{H^m(\mathbb{R}^d)}
	\lesssim
	\|v\|_{L^\infty(\mathbb{R}^d)}^{2\beta}
	\|v\|_{H^m(\mathbb{R}^d)},
\end{equation}
for the power-type nonlinearity. Such estimates may be viewed as a fractional chain rule type estimate; see, for instance, \cite[Lemma~2.3]{Hayashi1998a} for a precise statement under suitable assumptions on the parameters. The corresponding estimate for the half-density equation is established in Lemma~\ref{lem:powertype}.\footnote{
	A key difficulty is that, unlike the scalar nonlinearity $|v|^{2\beta}v$, the kernel nonlinearity
	\[
	\|\kappa(t,x,\cdot)\|_{L^2_y}^{2\beta}\kappa(t,x,y)
	\]
	does not immediately exhibit the same pointwise composition structure. The main observation is that, after passing to the half-density formulation, the required fractional Sobolev estimates can nevertheless be recovered by treating the problem at the level of Sobolev norms rather than pointwise differentiation.
}
As a consequence, our approach lowers the weighted assumption in \cite{Pusateri2021} from
\[
\|
\langle x\rangle^2 \kappa_1
\|_{L_{x,y}^2}
<
\infty
\]
to
\[
\|
\langle x\rangle^m \kappa_1
\|_{L_{x,y}^2}
<
\infty,
\qquad
m>\frac{d}{2},
\]
which is essential to cover the subcritical scattering regime together with the critical modified scattering regime in dimensions $d=2,3$.

A crucial point is that Sobolev-type nonlinear estimates such as \eqref{energy estimates} remain valid in the operator-valued setting after passing to the half-density formulation. Indeed, since the dispersive dynamics acts only on the $x$-variable, the nonlinear estimates reduce to Sobolev estimates with respect to a single spatial variable. This makes it possible to adapt arguments from scalar nonlinear Schr\"odinger equations and, in particular, the Hayashi--Naumkin argument at the operator level.
By contrast, in the von Neumann-type formulation considered previously, it is unclear whether the corresponding Sobolev estimates remain valid at the critical regularity level, due to additional difficulties related to the commutator structure. This is one of the reasons why we adopt the present half-density formulation instead of the von Neumann formulation.

As a consequence, the combination of the half-density formulation and the pseudo-conformal framework allows us to implement the dispersive PDE strategy developed for scalar nonlinear Schr\"odinger equations in the operator-valued setting. This enables us to resolve the remaining subcritical regimes as well as the critical modified scattering regime.

Furthermore, since the analysis is carried out directly at the operator level, the framework developed here is expected to be useful for future studies of genuinely operator-valued models such as Hartree--Fock and Dirac--Fock equations.

\begin{remark}[Higher dimensions]\label{remark:Higher dimensions}
	The present argument for the power-type nonlinearity is closely tied to the low regularity structure of the nonlinearity and therefore appears to be essentially restricted to dimensions $d=2,3$. Indeed, the analysis relies on Sobolev estimates of chain-rule type together with the embedding
	\[
	H^m(\mathbb{R}^d)\hookrightarrow L^\infty(\mathbb{R}^d),
	\qquad
	m>\frac{d}{2}.
	\]
	On the other hand, for the power-type nonlinearity, the relevant regularity is determined by the map
	\[
	z\mapsto |z|^{2/d}z,
	\]
	whose differentiability is limited to order $1+2/d$. Consequently, the present argument requires
	\[
	\frac d2 < 1+\frac2d,
	\]
	which is satisfied only for $d\le 3$.
	
	This obstruction is specific to the power-type interaction. In the Hartree case $(\mu=0)$, where the nonlinearity has a smoother structure, it is expected that the argument in \cite{You2024} can be extended to all dimensions $d\ge2$ after suitable modifications of the assumptions and estimates. The one-dimensional case is excluded because the Coulomb potential $|x|^{-1}$ is too singular at the origin.
\end{remark}

\subsection{Structure of the Paper}
The rest of the paper is organized as follows.
In \Cref{sec:motivation}, we provide a heuristic derivation of the equation and explain its physical relevance.
In \Cref{sec:prelim}, we list the notations that are frequently used in the paper. Then we introduce the pseudo-conformal transformation, and present the nonlinear iterative scheme.

In \Cref{sec:bootstrap}, we prove \Cref{prop:bootstrap}, which yields the global existence of $\kappa$ on $[1,\infty)$ and the desired time decay estimates.
Finally, in \Cref{sec:modsc}, we establish modified scattering, completing the proof of \Cref{thm:main2} and \Cref{thm:main1}.

\section{Physical Background and Scaling Regimes}\label{sec:motivation}

\subsection{Physical Motivation}

In effective mean-field descriptions of interacting fermionic systems, the potential $V(\rho)$ typically contains two types of contributions. One is a \emph{Hartree term}, which represents the classical mean-field interaction generated by the particle density.
The other is a correction originating from the Pauli exclusion principle, known as the \emph{exchange term}. In this section, we explain heuristically how these two contributions arise.

\subsubsection{Heuristic Derivation of the Hartree Term}

Consider a system of $N$ interacting particles with pair potential $W$. At the many-body level, the Hamiltonian contains the interaction term
\[
\sum_{1\le i<j\le N} W(x_i-x_j).
\]
If the system is described by a one-particle density matrix $\gamma$, the particle density is
\[
\rho_\gamma(x)=\gamma(x,x).
\]
In a mean-field approximation, one assumes that each particle moves in the average potential generated by the spatial distribution of all other particles. The interaction energy then becomes approximately
\[
E_{\mathrm{H}}
\approx
\frac12
\iint
W(x-y)\rho(x)\rho(y)\,\dd x \dd y.
\]
Taking the functional derivative with respect to $\rho$ yields the effective potential
\[
V_{\mathrm{H}}(x)
=
\frac{\delta E_{\mathrm{H}}}{\delta\rho(x)}
=
\int W(x-y)\rho(y)\,\dd y
=
(W * \rho)(x).
\]
Thus the mean-field dynamics naturally contains the Hartree term
\[
V_{\mathrm{H}}(\rho)
=
W * \rho.
\]
In Coulomb systems, i.e., $W(x)=|x|^{-1}$, this becomes
\[
V_{\mathrm{H}}(\rho)
=
|\cdot|^{-1} * \rho,
\]
which corresponds physically to the classical electrostatic potential generated by the charge density. Here $1/|x|$ denotes the three-dimensional Coulomb potential.
For more detailed results, see e.g. \cite{Elgart2004,Benedikter2014,Porta2017} and the references therein.

We adopt this choice in both the three-dimensional setting and the two-dimensional one: in the latter case we consider a 2D sheet embedded in $\mathbb{R}^3$, so the electrons are confined to the sheet but still interact through the full 3D Coulomb kernel $1/|x|$, rather than the logarithmic potential $-\log|x|$ that would arise from a purely two-dimensional Poisson theory.

\subsubsection{Heuristic Derivation of the Exchange Term}

The Hartree approximation neglects the fermionic nature of the particles. Because fermionic many-body states are antisymmetric, the Pauli exclusion principle induces additional correlations even in the absence of direct interactions.

Within Hartree--Fock theory, the interaction energy contains the exchange contribution
\[
E_{\mathrm{xc}}
=
-\frac12
\iint
\frac{|\gamma(x,y)|^2}{|x-y|}
\,\dd x \, \dd y,
\]
where we again use the three-dimensional Coulomb kernel $1/|x-y|$ for both two- and three-dimensional cases, as in the Hartree term.

To estimate its dependence on the density, we consider a locally homogeneous Fermi gas. Near the ground state, momentum states are filled up to the local Fermi momentum $k_F(x)$, so that
\[
\rho(x)
\sim
\int_{|p|\le k_F(x)} \dd p
\sim
k_F(x)^d.
\]
Hence
\[
k_F(x)\sim\rho(x)^{1/d}.
\]
In such a state, the off-diagonal density matrix decays on the \emph{Fermi length}
\[
\ell_F \sim k_F^{-1}.
\]
More precisely,
\[
\gamma(x,y)
\approx
\rho(x)\,
F\!\left(k_F(x)|x-y|\right),
\]
where $F$ is a dimension-dependent function that decays for $|x-y|\gg k_F^{-1}$. Consequently,
\[
|\gamma(x,y)|^2
\sim
\rho(x)^2\,
G\!\left(k_F(x)|x-y|\right),
\]
where $G=|F|^2$.
Substituting this into the exchange energy and writing $r=x-y$ gives
\[
E_{\mathrm{xc}}
\sim
-\iint
\frac{\rho(x)^2\,G(k_F r)}{|r|}
\,\dd r \,\dd x.
\]
The integral over $r$ is effectively restricted to the region $|r|\lesssim \ell_F \sim k_F^{-1}$. Rescaling $u=k_F r$ yields
\[
\int
\frac{G(k_F r)}{|r|}
\,\dd r
\sim
k_F^{1-d}.
\]

Therefore, we have
\[
E_{\mathrm{xc}}
\sim
-\int
\rho(x)^2 k_F^{1-d}\,\dd x.
\]
Using $k_F\sim\rho^{1/d}$, we obtain
\[
E_{\mathrm{xc}}(\rho)
\sim
- \int \rho(x)^{1+1/d} \, \dd x.
\]
Taking the functional derivative with respect to $\rho$ gives the
corresponding effective potential
\[
V_{\mathrm{xc}}(\rho) =
\frac{\delta E_{\mathrm{xc}}}{\delta\rho}
\sim
-\rho^{1/d}.
\]
Thus exchange effects naturally produce nonlinear potentials involving powers of the density determined by the spatial dimension. This contribution is often represented by a local term of the form
\[
V_{\mathrm{xc}}(\rho)=\mu \rho^{1/d},
\]
which captures the leading-order scaling of exchange correlations in the local density approximation, first proposed by Dirac~\cite{Dirac1930}; for rigorous results in this direction, see e.g. \cite{LO1981,FS1990,Bach1992,Solovej2003,LLS2019} and the references therein.

\subsection{Regimes: Small-Data vs.\ Small-Interaction}
In many works in mathematical physics, one normalizes the density matrix by
\[
\Tr \gamma = 1 .
\]
In contrast, dispersive PDE theory often relies on \emph{small-data} assumptions. To connect the two viewpoints one may introduce a scaling
\[
\gamma_\varepsilon := \varepsilon \gamma ,
\]
with $0<\varepsilon\ll1$.
Then
\[
\Tr \gamma_\varepsilon = \varepsilon ,
\]
and any norm that is linear in $\gamma$ is multiplied by $\varepsilon$. This allows one to place the system in a perturbative regime where small-data well-posedness or scattering theory can be applied.

However, the nonlinear potential depends on the density
\[
\rho_\gamma(x)=\gamma(x, x),
\]
so the scaling also affects the nonlinear term.
Then we have
\[
\rho_{\gamma_\varepsilon}(x)=\varepsilon \rho_\gamma(x).
\]

Now, consider our Kohn--Sham equation \Cref{eqn:KS-gamma} for $\gamma_{\varepsilon}$.
Under the scaling $\gamma_\varepsilon=\varepsilon\gamma$ we obtain
\[
\ii \partial_t \gamma_\varepsilon
=
\Big[
-\frac12\Delta + V(\rho_{\gamma_\varepsilon}),\,
\gamma_\varepsilon
\Big],
\]
with
\[
\rho_{\gamma_\varepsilon}=\varepsilon\rho_\gamma .
\]
Consequently,
\[
V_{\mathrm{H}}(\rho_{\gamma_\varepsilon})
=
\lambda |\cdot|^{-1} * (\varepsilon\rho_\gamma)
=
\varepsilon \lambda |\cdot|^{-1} * \rho_\gamma ,
\]
and
\[
V_{\mathrm{xc}}(\rho_{\gamma_\varepsilon})
=
\mu (\varepsilon\rho_\gamma)^{1/d}
=
\mu \varepsilon^{1/d} \rho_\gamma^{1/d}.
\]
Thus the equation becomes
\[
\ii \partial_t \gamma_\varepsilon
=
\Big[
-\frac12\Delta
+ \varepsilon \lambda |\cdot|^{-1} * \rho_\gamma
+ \varepsilon^{1/d}\mu \rho_\gamma^{1/d},
\,\gamma_\varepsilon
\Big].
\]

Therefore, the smallness obtained by the rescaling can be interpreted equivalently as a smallness of the effective coupling constants:
\[
\lambda \mapsto \varepsilon \lambda,
\qquad
\mu \mapsto \varepsilon^{1/d} \mu .
\]
In particular, the ``small-data regime'' in the PDE sense corresponds to a ``weak-coupling regime'' of the interaction.

\section{Preliminaries}\label{sec:prelim}

\subsection{Notations}\label{sec:notations}

We use the following notation throughout the paper.
For $z \in \mathbb{C}$, we denote its complex conjugate by $\bar{z}$.
We write $A\lesssim B$ iff $A \le CB$ for some universal constant $C>0$.
For $x \in \mathbb{R}^d$, we set $\langle x \rangle = (1+|x|^2)^{1/2}$.
We denote the Fourier transform by $\mathcal{F}$, which is defined by
\[
\mathcal{F}f(k) = \int_{\RR^d} \ee^{-\ii k \cdot x} f(x) \, \dd x.
\]
Its inverse is denoted by $\mathcal{F}^{-1}$, which is given by
\[
\mathcal{F}^{-1}g(x) = \frac{1}{(2\pi)^d}\int_{\RR^d} \ee^{\ii k \cdot x} g(k) \, \dd k.
\]
We write $L^2 = L^2(\mathbb{R}^d) = L^2(\mathbb{R}^d;\mathbb{C})$
with the complex inner product
\[
\langle f, g \rangle_{L^2} := \int_{\mathbb{R}^d} f(x) \overline{g(x)} \, \dd x.
\]
We often use the associated real inner product
\[
\langle f, g \rangle:= \Re \langle f, g \rangle_{L^2}.
\]
For $1\le p <\infty$, we denote the $p$-Schatten class on $L^2(\mathbb{R}^d)$ by $\mathfrak{S}^p$ with the norm
\[
\| A \|_{\mathfrak{S}^p} = \big(\operatorname{Tr} (|A|^p) \big)^{1/p}.
\]
When $p=\infty$, we set $\mathfrak{S}^{\infty} = \mathcal{B}(L^2)$, the space of bounded linear operators on $L^2(\mathbb{R}^d)$ with the operator norm
\[
\| A \|_{\mathfrak{S}^{\infty}} = \| A \|_{\mathrm{op}}.
\]

We denote by $L^p_x L^2_y$ the mixed norm space of measurable kernels $\kappa=\kappa(x,y)$, endowed with the norm
\[
\| \kappa \|_{L^p_x L^2_y}
:=\left( \int_{\mathbb{R}^d} \Big( \int_{\mathbb{R}^d} |\kappa(x,y)|^2 \,\dd y \Big)^{p/2} \dd x \right)^{1/p}.
\]
For $m\ge 0$, we define the homogeneous Sobolev norm in the $x$ variable by
\[
\|\kappa\|_{\dot H^m_x L^2_y}
:=
\left(
\int_{\mathbb{R}^d}\int_{\mathbb{R}^d}
\big||\nabla_x|^m \kappa(x,y)\big|^2 \,\dd y\,\dd x
\right)^{1/2},
\]
where
\[
|\nabla_x|^m \kappa(x,y) = \mathcal{F}^{-1} |\xi|^m (\mathcal{F} \kappa)(\xi,y).
\]
For $0<s<2$, we define the homogeneous Besov seminorm in the $x$ variable, with values in $L^2_y$, by
\[
\|\kappa\|_{\dot B^s_{p,q}(L^2)}
:=
\left(
\int_0^\infty
\sup_{|h|\le \tau}
\left(
\int_{\mathbb{R}^d}
\|\Delta_h^2 \kappa(x,\cdot) \|_{L^2_y}^p
\dd x
\right)^{q/p}
\tau^{-1-sq}\,\dd \tau
\right)^{1/q}.
\]
Note that, for $0<s<1$, this seminorm is equivalent to
\[
\|\kappa\|_{\dot B^s_{p,q}(L^2)}
\sim
\left(
\int_0^\infty
\sup_{|h|\le \tau}
\left(
\int_{\mathbb{R}^d}
\|\Delta_h^1 \kappa(x,\cdot) \|_{L^2_y}^p
\dd x
\right)^{q/p}
\tau^{-1-sq}\,\dd \tau
\right)^{1/q},
\]
see \cite[Section~2.5.12]{Triebel1983}.\footnote{Here and throughout the paper, the Besov-type norms are understood with respect to the $x$-variable after taking the $L^2$-norm in the $y$-variable. For simplicity of notation, we omit the explicit distinction between the $x$ and $y$ variables when no confusion can arise.}

Here the difference operators act on the $x$ variable:
\begin{align*}
	\Delta_h^1 \kappa(x,y)&:=\kappa(x+h,y)-\kappa(x,y),\\
	\Delta_h^2 \kappa(x,y)&:=\kappa(x+h,y)+ \kappa(x-h,y)-2\kappa(x,y).
\end{align*}
Finally, we introduce the operators  for $m\ge0$,
\[
|\mathrm J_x (t)|^m:= \ee^{\ii t\Delta_x/2}\, |x|^m \,\ee^{-\ii t\Delta_x/2}
\quad \text{and} \quad \langle\mathrm J_x (t)\rangle^m:= \ee^{\ii t\Delta_x/2}\, \langle x\rangle^m \,\ee^{-\ii t\Delta_x/2},
\]
which correspond to the Heisenberg evolution of the position operator under the free Schr\"odinger flow.

\subsection{Pseudo-Conformal Transformation}\label{subsec:pseudo-conformal}
We introduce a pseudo-conformal transformation $\mathcal{I}: \kappa \mapsto \nu$ associated with the free Schr\"{o}dinger propagator. For a solution $\kappa(t)$ to \eqref{eqn:KS-kappa-kernel} on $[1,T]$, let
\[
\mathrm{U}(t) =\ee^{\ii t \Delta_x/2} , \qquad (\mathrm{M}(t) f)(x,y) = \ee^{\ii |x|^2/(2t)} f(x,y), \qquad (\mathrm{D}(t) f)(x,y) = (\ii t)^{-d/2} f(x/t, y).
\]
Define $\omega(t)$ via $\kappa(t) =\mathrm{M}(t) \mathrm{D}(t) \omega(t) $, which is equivalent to $\omega(t) = \mathcal{F} \mathrm{M}(t) \mathrm{U}(-t) \kappa(t)$. Then there holds
\[
\omega(t,x,y) = (\ii t)^{d/2} \ee^{-\ii t|x|^2 /2} \kappa(t,tx,y).
\]
It follows that
\begin{equation}\label{eqn:pseudo-conformal-prop}
	\| |\nabla_x|^{m} \omega(t)\|_{L^2_{x,y}} =\| |\mathrm{J}_x|^{m} \kappa(t) \|_{L^2_{x,y}}, \qquad
	\| \omega(t) \|_{L^{\infty}_x L^2_y}  = t^{d/2}\| \kappa(t) \|_{L^{\infty}_x L^2_y}
\end{equation}
for $m\ge0$.
A direct computation shows that $\omega$ solves
\begin{equation*}
	\begin{cases}
		\begin{aligned}
			&\ii \partial_t \omega = \left( - \frac{1}{2t^2} \Delta_x + \frac{1}{t} V(\rho_{\omega\omega^{\ast}})\right)\omega, \\[5pt]
			& \omega\vert_{t=1} = \omega_1 = \ii^{d/2} \ee^{-\ii |x|^2/2} \kappa_1.
		\end{aligned}
	\end{cases}
\end{equation*}
We now perform the time inversion. Define $\nu(t) = \overline{\omega(1/t)}$ for $t \in [T^{-1},1]$.
Then $\nu$ solves
\begin{equation}\label{eqn:nu}
	\begin{cases}
		\begin{aligned}
			&\ii \partial_t \nu = \left( - \frac{1}{2} \Delta_x + \frac{1}{t} V(\rho_{\nu\nu^{\ast}})\right) \nu, \\[5pt]
			& \nu\vert_{t=1} = \nu_1 = (-\ii)^{d/2} \ee^{\ii |x|^2/2} \overline{\kappa_1}.
		\end{aligned}
	\end{cases}
\end{equation}
The long-range structure of the equation is reflected in the appearance of the non-integrable coefficient $t^{-1}$ in front of the nonlinear potential.

We use the following local existence theorem, proved in \Cref{sec:LWP} by the contraction mapping principle.
\begin{lemma}\label{lem:LWP}
	Let $d=2,3$ and fix $m \in (d/2,1+2/d)$. Suppose $\nu_1 \in H^m_x L^2_y$.
	Then there exist $T>1$ and a unique solution of \Cref{eqn:nu} such that $\nu \in C([T^{-1}, 1]; H^m_x L^2_y)$.
\end{lemma}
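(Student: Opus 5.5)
We prove Lemma~\ref{lem:LWP} by a contraction mapping argument for the Duhamel formulation of \eqref{eqn:nu}, run backward from $t=1$. Write $\mathrm{U}(t)=\ee^{\ii t\Delta_x/2}$ acting in $x$ only. A solution on $[T^{-1},1]$ is a fixed point of
\[
\Phi(\nu)(t) = \mathrm{U}(t-1)\nu_1 + \ii \int_t^1 \mathrm{U}(t-s)\, \frac{1}{s} V(\rho_{\nu\nu^*}(s))\nu(s)\,\dd s .
\]
We work in the ball
\[
X_T=\Big\{\nu\in C([T^{-1},1];H^m_xL^2_y) : \sup_t\|\nu(t)\|_{H^m_xL^2_y}\le 2\|\nu_1\|_{H^m_xL^2_y}\Big\}.
\]
Since $\mathrm{U}$ acts only on $x$ and is unitary on $H^m_xL^2_y$, and since $s^{-1}\le T$ on the interval, it suffices to prove two nonlinear estimates, each for fixed time. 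Since $H^m_xL^2_y\hookrightarrow L^\infty_xL^2_y$ for $m>d/2$, we have $\|\rho\|_{L^\infty}\lesssim\|\nu\|_{H^m_xL^2_y}^2$.

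\textbf{Hartree part.} We aim for
\[
\|(|\cdot|^{-1}*\rho_{\nu\nu^*})\nu\|_{H^m_xL^2_y}\lesssim\|\nu\|_{H^m_xL^2_y}^3 .
\]
Put $V_H=|\cdot|^{-1}*\rho$ with $\rho\in L^1\cap L^\infty$; this gives $V_H\in L^\infty$ for $d=2,3$. For derivatives, the Kato--Ponce/fractional Leibniz rule, applied in $x$ with $L^2_y$-valued functions, reduces matters to bounding $\||\nabla|^m V_H\|_{L^p}$ for suitable $p$. This follows from Hardy--Littlewood--Sobolev, since $|\nabla|^mV_H\sim|\nabla|^{m-(d-1)}\rho$. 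The factor $|\nabla|^m\rho$ is handled by Leibniz on $\rho=\langle\nu,\nu\rangle_{L^2_y}$, with Hölder in $y$.

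\textbf{Power part.} We need
\[
\|\rho^{1/d}\nu\|_{H^m_xL^2_y}\lesssim\|\nu\|_{L^\infty_xL^2_y}^{2/d}\|\nu\|_{H^m_xL^2_y}.
\]
This is the operator-valued fractional chain rule announced as Lemma~\ref{lem:powertype}, which we invoke, together with its difference version
\[
\|\rho_1^{1/d}\nu_1-\rho_2^{1/d}\nu_2\|_{L^2_{x,y}}\lesssim(\|\nu_1\|_{L^\infty L^2}+\|\nu_2\|_{L^\infty L^2})^{2/d}\|\nu_1-\nu_2\|_{L^2_{x,y}}.
\]
The difference estimate holds pointwise in $x$, because $\mathcal U\mapsto\|\mathcal U\|^{2/d}\mathcal U$ is locally Lipschitz on the Hilbert space $L^2_y$.

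\textbf{Closing the argument.} From these two estimates,
\[
\|\Phi(\nu)\|_{L^\infty H^m}\le\|\nu_1\|+C(1-T^{-1})T\,(R^3+R^{1+2/d}),
\]
where $R=2\|\nu_1\|_{H^m_xL^2_y}$. The map $\Phi$ is a contraction in the weaker metric $L^\infty_tL^2_{x,y}$, using the Lipschitz bounds above and the Hartree difference bound. The ball $X_T$ is closed in this metric by weak lower semicontinuity of the $H^m$ norm. Choosing $T$ close to $1$ makes $\Phi$ a self-map and a contraction. Continuity in time with values in $H^m$ follows from the Duhamel formula and strong continuity of $\mathrm{U}$. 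Uniqueness in $C([T^{-1},1];H^m_xL^2_y)$ follows from the same $L^2$ difference estimate and Gronwall.

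\textbf{Main obstacle.} The hardest step is the $H^m$ bound for $\rho^{1/d}\nu$ when $m>1$. The map $\mathcal U\mapsto\|\mathcal U\|_{L^2_y}^{2/d}\mathcal U$ is only $C^{1,2/d}$, so we need the Besov/second-difference characterization of the norm. Concretely, estimate $\|\Delta_h^2(\|\nu\|^{2/d}\nu)\|_{L^2_xL^2_y}$ using Hölder continuity of the derivative of this map on the Hilbert space $L^2_y$; the constraint $m<1+2/d$ enters exactly here. Since the paper provides this as Lemma~\ref{lem:powertype}, we take it as given.
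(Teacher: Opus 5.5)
Your proposal follows essentially the same route as the paper's proof. It uses a Duhamel contraction in an $H^m_xL^2_y$-ball with the weaker metric $\sup_t\|\cdot\|_{L^2_{x,y}}$. The Hartree term is handled by the fractional Leibniz rule, the $L^\infty$ bound and Hardy--Littlewood--Sobolev, and the power term by Lemma~\ref{lem:powertype}. Differences are controlled by $L^2$ Lipschitz bounds, and a final Duhamel step gives continuity in $H^m$.

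One small correction concerns the space you contract in. Your ball $X_T$ sits inside $C([T^{-1},1];H^m_xL^2_y)$. Under uniform $L^2$ limits, weak lower semicontinuity preserves only the bound $\sup_t\|\nu(t)\|_{H^m_xL^2_y}\le R$, not strong continuity in $H^m$. So $X_T$ is not complete in your metric. Take the ball in $C([T^{-1},1];L^2_{x,y})\cap L^\infty([T^{-1},1];H^m_xL^2_y)$, as the paper does. Your Duhamel step then upgrades the fixed point to $C([T^{-1},1];H^m_xL^2_y)$. Your Gronwall argument also yields uniqueness in that whole class, not just in the ball.
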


Next, we define a bootstrap norm $X_T$ for $\nu$. For $T \ge 1$, we set
\begin{equation}\label{def:bootstrapnorm}
	\| \nu \|_{X_T}
	:= \sup_{t \in [T^{-1},1]}
	\left\{
	\| \nu(t) \|_{L^2_xL^2_y}
	+  \|\nu(t)\|_{L^{\infty}_x L^2_y}
	+ t^{\delta} \| \nu(t)  \|_{H^m_x L^2_y}
	\right\}.
\end{equation}
Here $\delta >0$ is chosen sufficiently small so that $\delta < (2m-d)/16$.

We make the bootstrap assumption
\begin{equation}\label{assm:apriori}
	\| \nu \|_{X_{T}} \le \epsilon_1.
\end{equation}

We establish the following a priori estimates in the next section.
\begin{proposition}\label{prop:bootstrap}
	Let $d=2,3$, and $m \in (d/2,1+2/d)$. Suppose $\nu_1 \in H^m_x L^2_y$ with $\| \nu_1 \|_{H^m_x L^2_y} \le \epsilon_0$. Let $\nu(t)$ be a local-in-time solution to \eqref{eqn:nu} on $[T^{-1}, 1]$, given by \Cref{lem:LWP}.
	If $\|\nu \|_{X_{T}} \le \epsilon_1$, then
	\[
	\| \nu \|_{X_T} \le C_0 \epsilon_0  + C_1 \epsilon_1^{1+2/d},
	\]
	where $C_0>0$ and $C_1>0$ are constants independent of $\epsilon_0$, $\epsilon_1$, and $T$.
\end{proposition}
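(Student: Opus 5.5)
We bound the three components of the $X_T$ norm in \eqref{def:bootstrapnorm} in turn. The dispersive part $-\frac12\Delta_x$ is unitary on $H^m_xL^2_y$, so all the work goes into the nonlinear potential $t^{-1}V(\rho)$. It is real-valued, because $\lambda,\mu\in\RR$ and $\rho\ge0$, and it acts only in the $x$ variable.

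\textbf{$L^2$ bound.} Pairing \eqref{eqn:nu} with $\nu$ in the real inner product $\langle\cdot,\cdot\rangle$, the Laplacian contributes nothing by self-adjointness, and neither does $t^{-1}V(\rho)$, since it is a real multiplier in $x$. Hence $\|\nu(t)\|_{L^2_{x,y}}=\|\nu_1\|_{L^2_{x,y}}\le\epsilon_0$.

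\textbf{$H^m$ energy estimate.} Apply $\langle\nabla_x\rangle^m$ to \eqref{eqn:nu} and take the same pairing. This gives
\[
\frac{d}{dt}\|\nu\|_{H^m_xL^2_y}^2\le \frac{2}{t}\big|\langle \langle\nabla\rangle^m(V(\rho)\nu)-V(\rho)\langle\nabla\rangle^m\nu,\langle\nabla\rangle^m\nu\rangle\big| ,
\]
where the term with $V(\rho)$ outside the commutator again vanishes. It is simpler, however, to bound $\|V(\rho)\nu\|_{H^m_xL^2_y}$ directly.

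For the exchange part, the operator analogue of \eqref{energy estimates} (Lemma~\ref{lem:powertype}) gives
\[
\|\rho^{1/d}\nu\|_{H^m_xL^2_y}\lesssim\|\nu\|_{L^\infty_xL^2_y}^{2/d}\|\nu\|_{H^m_xL^2_y}.
\]
For the Hartree part, write $|\cdot|^{-1}*\rho$ and use the fractional Leibniz rule in $x$ with $L^2_y$-valued functions. The Coulomb potential satisfies
\[
\||\cdot|^{-1}*\rho\|_{L^\infty}\lesssim\|\rho\|_{L^1}^{a}\|\rho\|_{L^\infty}^{1-a}\lesssim \epsilon_1^2 ,
\]
where the exponent $a$ depends on $d$ and is positive since $1<d$. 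Its $\dot W^{m,p}$ norms are controlled through Hardy--Littlewood--Sobolev together with $\|\rho\|_{H^m}\lesssim\|\nu\|_{L^\infty L^2}\|\nu\|_{H^mL^2}$. This gives a bound $\epsilon_1^2\|\nu\|_{H^m_xL^2_y}$, which is better than $\epsilon_1^{2/d}$ when $d\ge2$, since then $\epsilon_1^2\le\epsilon_1^{2/d}$. Altogether,
\[
\frac{d}{dt}\|\nu\|_{H^m_xL^2_y}\le\frac{C\epsilon_1^{2/d}}{t}\|\nu\|_{H^m_xL^2_y}.
\]
We run this backward from $t=1$ down to $t\in[T^{-1},1]$, and Gronwall gives $\|\nu(t)\|_{H^m_xL^2_y}\le\epsilon_0\,t^{-C\epsilon_1^{2/d}}$. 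Choosing $\epsilon_1$ so small that $C\epsilon_1^{2/d}\le\delta$ yields $t^\delta\|\nu(t)\|_{H^m_xL^2_y}\le\epsilon_0$.

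\textbf{$L^\infty_xL^2_y$ bound (main obstacle).} The Sobolev embedding alone only gives the growing bound $\epsilon_1 t^{-\delta}$, which is not uniform in $t$. Instead we use an ODE argument in the Fourier variable. Set $\phi(t)=\mathrm{U}(-t)\nu$ (the free propagator in $x$) and $\hat\phi=\mathcal F_x\phi$. Then
\[
\ii\partial_t\hat\phi=t^{-1}\mathcal F\,\mathrm U(-t)\,V(\rho)\nu .
\]
We split
\[
\mathrm U(-t)\big(V(\rho)\nu\big)= V(\rho)\,\mathrm U(-t)\nu+\big[\mathrm U(-t),V(\rho)\big]\nu,
\]
or rather use the standard factorization $\mathrm{U}(t)=\mathrm M\mathrm D\mathcal F\mathrm M$ of the free propagator. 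Up to unimodular phases this reduces $\hat\phi(t,\xi)$ to $\nu$ evaluated at the rescaled point. The main term is then $t^{-1}\widetilde V(\xi)\hat\phi(t,\xi)$ with a real potential $\widetilde V$, and it preserves $\|\hat\phi(t,\xi,\cdot)\|_{L^2_y}$ pointwise in $\xi$. The remainders involve $\mathrm M-1$ and $\mathrm U(\pm t)-1$. They are small because
\[
\|(\mathrm U(t)-1)f\|_{L^\infty_xL^2_y}\lesssim t^{\alpha}\|f\|_{H^m_xL^2_y}, \qquad \alpha<(2m-d)/4 ,
\]
which follows from Sobolev with a fractional loss; the same mechanism applies to $\mathrm M-1$. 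The nonlinear factor contributes $\epsilon_1^{2/d}$ in $L^\infty$ and a further $\epsilon_1 t^{-\delta}$ in $H^m$. After integrating in $t$, the error is
\[
\int_t^1 s^{-1+\alpha-\delta}\,ds\;\epsilon_1^{1+2/d}\lesssim\epsilon_1^{1+2/d},
\]
which is finite because $\delta<(2m-d)/16<\alpha$. It follows that $\|\hat\phi(t)\|_{L^\infty_\xi L^2_y}\le C_0\epsilon_0+C\epsilon_1^{1+2/d}$.

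It remains to go back from $\hat\phi$ to $\nu$ in $L^\infty_xL^2_y$. This again costs only an error of size $t^{\alpha}\|\nu\|_{H^m_xL^2_y}\lesssim\epsilon_1 t^{\alpha-\delta}\le\epsilon_1$, so the bound is uniform in $t$. The delicate point of this step is to ensure that every remainder is at least cubic in $\epsilon_1$, or else linear but carrying $\epsilon_0$. To arrange this I would first make the remainder estimates precise, requiring $\delta\ll\alpha$, and only then combine the three bounds to obtain the claim of Proposition~\ref{prop:bootstrap}.
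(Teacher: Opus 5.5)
Your $L^2$ bound is fine. Your $H^m$ step is a valid variant of Lemma~\ref{lem:energy22}. Gronwall with $\|V(\rho)\nu\|_{H^m_xL^2_y}\lesssim\epsilon_1^{2/d}\|\nu\|_{H^m_xL^2_y}$ gives $t^{\delta}\|\nu(t)\|_{H^m_xL^2_y}\le\epsilon_0$ once $C\epsilon_1^{2/d}\le\delta$, which is an extra but harmless smallness condition. The paper instead feeds the bootstrap growth $\epsilon_1\tau^{-\delta}$ into Corollary~\ref{coro:NL} and integrates $\tau^{-1-\delta}$.

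The genuine gap is the $L^\infty_xL^2_y$ step, where you run the large-time Hayashi--Naumkin argument in the wrong frame. For $\nu$ the asymptotic regime is $t\to0^+$, and the pseudo-conformal transform has already performed the factorization $\mathrm U=\mathrm M\mathrm D\mathcal F\mathrm M$. Indeed, one checks $\overline{\eta(1/t)}=\mathcal F\,\ee^{-\ii t\Delta_x/2}\kappa(t)$, so the physical variable $x$ of $\eta=\ee^{-\ii t\Delta_x/2}\nu$ is already the frequency variable of $\kappa$. Applying a further $\mathcal F_x$ to $\phi=\eta$ and factorizing again breaks the argument in three ways:
\begin{itemize}
\item $\mathrm M(t)-1=\ee^{\ii|x|^2/(2t)}-1$ is not small as $t\to0$. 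Controlling it would need weights $|x|^{2\alpha}\nu$, i.e.\ Sobolev regularity of $\kappa$, which $X_T$ does not contain.
\item $\mathcal F_x(V(\rho)\nu)$ is a convolution in $\xi$, so there is no main term of the form $t^{-1}\widetilde V(\xi)\hat\phi(t,\xi)$.
\item $\|\hat\phi\|_{L^\infty_\xi L^2_y}$ is not bounded at $t=1$ by $\|\nu_1\|_{H^m_xL^2_y}$, and it does not control $\|\nu\|_{L^\infty_xL^2_y}$ either.
\end{itemize}

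The repair is your first splitting, kept in physical space:
\[
\ii\partial_t\phi=t^{-1}V(\rho_\nu)\phi+t^{-1}[\ee^{-\ii t\Delta_x/2},V(\rho_\nu)]\nu .
\]
Since $V(\rho_\nu)$ is real and acts in $x$ only, the first term leaves $\|\phi(t,x,\cdot)\|_{L^2_y}$ unchanged pointwise in $x$. The commutator equals $(\ee^{-\ii t\Delta_x/2}-1)(V(\rho_\nu)\nu)-V(\rho_\nu)(\ee^{-\ii t\Delta_x/2}-1)\nu$. By your estimate with $s>d/2$ and $s+2\theta\le m$, it is $O(\epsilon_1^{1+2/d}t^{\theta-\delta})$ in $L^\infty_xL^2_y$, hence integrable against $t^{-1}$ when $\theta>\delta$. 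This is essentially Lemmas~\ref{lem:remainder2} and~\ref{lem:decay}. The paper uses $V(\rho_\eta)\eta$ as the main term together with the phase $\ee^{-\ii\Psi}$ only because it needs them again for modified scattering.

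Finally, when returning from $\phi$ to $\nu$ you must use your improved bound $t^\delta\|\nu\|_{H^m_xL^2_y}\le\epsilon_0$, not the bootstrap bound $\epsilon_1t^{-\delta}$. The latter leaves an $O(\epsilon_1)$ term that does not close, and this is exactly the ``delicate point'' you left unresolved. (For $d=3$ the remainders are $O(\epsilon_1^{1+2/d})$ rather than cubic, which suffices.)
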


\section{Global Existence with Dispersive Bounds}\label{sec:bootstrap}
In this section, we prove \Cref{prop:bootstrap}, based on the bootstrap argument.

\subsection{Estimates on the Nonlinear Terms}\label{sec:bounds-nonlinear}
We state and prove time decay estimates of the nonlinear terms under the bootstrap assumption.
Recall that $V_{\mathrm{H}}(\rho) = \lambda |x|^{-1} \ast \rho$ and $V_{\mathrm{xc}} (\rho ) = \mu\rho^{1/d}$.
To simplify the notation, we often write
$$V_{\nu}:=V(\rho_{\nu\nu^{\ast}}) \quad \text{and} \quad V_{j,\nu}:= V_j(\rho_{\nu\nu^{\ast}}) \quad \text{for}\quad j\in \{\mathrm{H},\mathrm{xc}\}.$$
We begin with estimates on $V_{\mathrm{H}}(\rho_{\nu\nu^{\ast}})$.

\begin{lemma} \label{lem:potential1v}
	Let $1 \le |\alpha| \le 2$. Under the bootstrap assumption \Cref{assm:apriori}, there holds
	\begin{align*}
		\| V_{\mathrm{H}}(\rho_{\nu\nu^{\ast}})(t)\|_{L^{\infty}} &\lesssim \epsilon_1^2 , &d=2,3,\\
		\| \partial_x^{\alpha} V_{\mathrm{H}}(\rho_{\nu\nu^{\ast}})(t) \|_{L^2} &\lesssim \epsilon_1^2  t^{- \delta(|\alpha|-1)/m}, &d=2,\\
		\| \partial_x^{\alpha} V_{\mathrm{H}}(\rho_{\nu\nu^{\ast}})(t) \|_{L^2} &\lesssim \epsilon_1^2 ,  &d=3,
	\end{align*}
	for $T^{-1} \le t \le 1$.
\end{lemma}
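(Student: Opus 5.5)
The plan is to reduce everything to bounds on the density $\rho:=\rho_{\nu\nu^\ast}(t,x)=\|\nu(t,x,\cdot)\|_{L^2_y}^2$ and its $x$-derivatives, and then apply standard estimates for the Riesz potential $|x|^{-1}\ast$ in $d=2,3$. Under \eqref{assm:apriori} we have the following ingredients:
\begin{itemize}
\item $\|\rho\|_{L^1}=\|\nu\|_{L^2_{x,y}}^2\le\epsilon_1^2$ and $\|\rho\|_{L^\infty}=\|\nu\|_{L^\infty_xL^2_y}^2\le\epsilon_1^2$, hence $\|\rho\|_{L^p}\le\epsilon_1^2$ for all $p\in[1,\infty]$.
\item $\|\nu\|_{H^m_xL^2_y}\le\epsilon_1t^{-\delta}$.
\item By interpolation between $L^2$ and $H^m$, $\|\nu\|_{H^s_xL^2_y}\lesssim\epsilon_1t^{-\delta s/m}$ for $0\le s\le m$.
\end{itemize}
This interpolation is what produces the factor $t^{-\delta(|\alpha|-1)/m}$.

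\textbf{The $L^\infty$ bound.} Split the kernel as $|x|^{-1}=|x|^{-1}\mathbf 1_{|x|\le1}+|x|^{-1}\mathbf 1_{|x|>1}$. The first piece lies in $L^1$ for $d=2,3$ and is paired with $\|\rho\|_{L^\infty}$. The second piece lies in $L^\infty$ and is paired with $\|\rho\|_{L^1}$. Together these give $\|V_{\mathrm H}\|_{L^\infty}\lesssim\epsilon_1^2$.

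\textbf{Derivatives, $d=3$.} Write $\partial^\alpha V_{\mathrm H}=\lambda\,\partial^\alpha(-\Delta)^{-1}(4\pi\rho)$.
\begin{itemize}
\item For $|\alpha|=2$, this is a composition of Riesz transforms applied to $\rho$, so $\|\partial^\alpha V_{\mathrm H}\|_{L^2}\lesssim\|\rho\|_{L^2}\le\epsilon_1^2$.
\item For $|\alpha|=1$, it equals $R\,|\nabla|^{-1}\rho$. Hardy--Littlewood--Sobolev gives $\||\nabla|^{-1}\rho\|_{L^2}\lesssim\|\rho\|_{L^{6/5}}\le\epsilon_1^2$.
\end{itemize}
No time decay loss occurs here.

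\textbf{Derivatives, $d=2$.} Here $|x|^{-1}\ast$ is $c|\nabla|^{-1}$, so $\partial^\alpha V_{\mathrm H}=cR^{\alpha'}|\nabla|^{|\alpha|-1}\rho$ with $R^{\alpha'}$ a product of Riesz transforms.
\begin{itemize}
\item For $|\alpha|=1$, $\|\partial^\alpha V_{\mathrm H}\|_{L^2}\lesssim\|\rho\|_{L^2}\le\epsilon_1^2$.
\item For $|\alpha|=2$, we need $\|\nabla\rho\|_{L^2}$. Formally $\nabla\rho=2\Re\int\overline{\nu}\,\nabla_x\nu\,\dd y$, so Cauchy--Schwarz in $y$ gives
\[
\|\nabla\rho\|_{L^2}\le2\|\nu\|_{L^\infty_xL^2_y}\|\nabla_x\nu\|_{L^2_{x,y}}\lesssim\epsilon_1\cdot\epsilon_1t^{-\delta/m}.
\]
The last step is the interpolation with $s=1\le m$, which is admissible since $m>1$ when $d=2$. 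This is exactly the claimed rate.
\end{itemize}

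\textbf{Main obstacle.} The only nontrivial point is justifying the product rule for $\nabla\rho$ at the regularity $\nu\in H^m_xL^2_y$. I would handle it by density: approximate $\nu$ by smooth kernels, where the formula is clear, and pass to the limit using the displayed bilinear bound. Alternatively, it can be done directly with difference quotients, using
\[
\Delta^1_h\rho=\langle\Delta^1_h\nu,\nu(\cdot+h)\rangle_{L^2_y}+\langle\nu,\Delta^1_h\nu\rangle_{L^2_y}.
\]
The Riesz potential bounds themselves are routine.
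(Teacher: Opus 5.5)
Your proposal is correct and follows essentially the paper's argument. You reduce to $L^p$ bounds on $\rho_{\nu\nu^{\ast}}$ and an $L^2$ bound on $\nabla\rho_{\nu\nu^{\ast}}$ via $\partial_{x_j}\rho = 2\Re\langle\partial_{x_j}\nu,\nu\rangle_{L^2_y}$, split the kernel into near and far parts for the $L^\infty$ bound, and use Calder\'on--Zygmund bounds in $d=2$ and Hardy--Littlewood--Sobolev in $d=3$. The differences are minor: the paper optimizes the cutoff radius to get the multiplicative bound $\|\rho\|_{L^1}^{(d-1)/d}\|\rho\|_{L^\infty}^{1/d}$, which it reuses later for differences of densities, and your Riesz-transform treatment of $|\alpha|=2$ in $d=3$ properly justifies the paper's endpoint case $p_1=2$, where the kernel $|x|^{-3}$ is not covered by HLS.
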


\begin{proof}
	By the definition of bootstrap norm, we have
	\[
	\| \nu(t) \|_{L^{\infty}_xL^2_y} \lesssim \epsilon_1 \quad\text{and}\quad \| \partial_x^{\beta} \nu(t) \|_{L^2_{x,y}} \lesssim \epsilon_1  t^{-\delta|\beta|/m}
	\]
	for $0\le |\beta| \le m$ and $T^{-1} \le t \le 1$. Applying Gagliardo--Nirenberg inequality, we get
	\[
	\| \partial_x \nu(t) \|_{L^q_x L^2_y} \lesssim \| \nu(t) \|_{\dot{H}^m_x L^2_y}^{1/m} \| \nu(t) \|_{L^r_x L^2_y}^{1-1/m} \quad\text{for}\quad \frac{1}{q} = \frac{1}{2m} + \frac{m-1}{mr}.
	\]
	Hence we obtain for $p\in[2,\infty]$ and $q\in[2,2m]$ that
	\[
	\|\nu(t) \|_{L^p_x L^2_y} \lesssim \epsilon_1 \quad\text{and}\quad \| \partial_x \nu(t) \|_{L^q_x L^2_y} \lesssim \epsilon_1  t^{-\delta/m}.
	\]
	We next derive the following bounds on the density:
	\begin{equation}\label{est:density-v}
		\| \rho_{\nu\nu^{\ast}}(t) \|_{L^p_x} \lesssim \epsilon_1^2 \quad\text{and}\quad \| \partial_x \rho_{\nu\nu^{\ast}}(t) \|_{L^q_x} \lesssim \epsilon_1^2 t^{-\delta/m}
	\end{equation}
	for $p \in [1,\infty]$ and $q \in [1,2m]$. Indeed, since
	\[
	\rho_{\nu\nu^{\ast}}(t,x) = \| \nu(t,x,\cdot) \|_{L^2_y}^2,
	\]
	we get for $p \in [1,\infty]$ that
	\[
	\| \rho_{\nu\nu^{\ast}}(t)\|_{L^p_x} \lesssim \| \nu(t)\|_{L^{2p}_x L^2_y}^2 \lesssim \epsilon_1^2.
	\]
	On the other hand, we have
	\[
	\partial_{x_j} \rho_{\nu\nu^{\ast}}(x) = \partial_{x_j} \langle \nu(t,x), \nu(t,x) \rangle_{L^2_y} = 2 \Re \langle \partial_{x_j} \nu(t,x), \nu(t,x) \rangle_{L^2_y}.
	\]
	Then we obtain for $q \in [1,2m]$ that
	\[
	\| \partial_x \rho_{\nu\nu^{\ast}} (t) \|_{L^q_x} \lesssim \| \nu(t) \|_{L^{r_1}_x L^2_y} \| \partial_x \nu(t) \|_{L^{r_2}_x L^2_y} \lesssim \epsilon_1^2 t^{-\delta/m}
	\]
	where $1/q= 1/r_1 + 1/r_2$ with $r_1 \in [2,\infty]$ and $r_2 \in [2,2m]$.
	
	We now estimate the potential $V_{\mathrm{H}}(\rho_{\nu\nu^{\ast}})= \lambda |\cdot|^{-1} \ast_x \rho_{\nu\nu^{\ast}}$.  For $R>0$, we compute
	\begin{equation*}
		V_{\mathrm{H}}(\rho_{\nu\nu^{\ast}})(t,x)
		\lesssim \int_{|x-z| \ge R} \frac{\rho_{\nu\nu^{\ast}}(t,z)}{|x-z|} \, \dd z + \int_{|x-z| \le R} \frac{\rho_{\nu\nu^{\ast}}(t,z)}{|x-z|} \, \dd z
		\lesssim R^{-1} \| \rho_{\nu\nu^{\ast}}(t) \|_{L^1} + R^{d-1}\| \rho_{\nu\nu^{\ast}}(t) \|_{L^{\infty}}.
	\end{equation*}
	Choose $R>0$ so that $R^{-1} \| \rho_{\nu\nu^{\ast}} (t) \|_{L^1} = R^{d-1} \| \rho_{\nu\nu^{\ast}}(t) \|_{L^{\infty}}$. This yields
	\begin{equation}\label{est:interpolation-riesz}
		\| V_{\mathrm{H}}(\rho_{\nu\nu^{\ast}}) (t) \|_{L^{\infty}} \lesssim \| \rho_{\nu\nu^{\ast}}(t) \|_{L^1}^{(d-1)/d} \| \rho_{\nu\nu^{\ast}} (t) \|_{L^{\infty}}^{1/d} \lesssim \epsilon_1^2 .
	\end{equation}
	It remains to bound $\partial_x^{\alpha} V_{\mathrm{H}}(\rho_{\nu\nu^{\ast}})$ for $1\le |\alpha| \le 2$. Observe that $\partial_x^{\alpha}V_{\mathrm{H}}(\rho_{\nu\nu^{\ast}}) = \partial_x^{\alpha} (-\Delta_x)^{-(d-1)/2} \rho_{\nu\nu^{\ast}}$.
	When $d=2$, we have $|\alpha| -d+1 \in [0,1]$. Since $\partial_x^{\alpha} (-\Delta_x)^{-|\alpha|/2}$ is a Calder\'on--Zygmund operator, we obtain that
	\[
	\| \partial_x^{\alpha} V_{\mathrm{H}}(\rho_{\nu\nu^{\ast}})(t) \|_{L^2}
	\sim \| (-\Delta_x)^{(|\alpha|-d+1)/2} \rho_{\nu\nu^{\ast}} (t) \|_{L^2}
	\lesssim \| \rho_{\nu\nu^{\ast}}(t) \|_{L^2}^{2-|\alpha|} \|\partial_x \rho_{\nu\nu^{\ast}}(t) \|_{L^2}^{|\alpha|-1} \lesssim \epsilon_1^2 t^{-\delta(|\alpha|-1)/m}.
	\]
	When $d=3$, we have $|\alpha| -d + 1 \in [-1, 0]$. We apply Hardy--Littlewood--Sobolev to get
	\begin{align*}
		\| \partial_x^{\alpha} V_{\mathrm{H}}(\rho_{\nu\nu^{\ast}})(t) \|_{L^2}
		\lesssim \| |\cdot|^{-1-|\alpha|} \ast_x \rho_{\nu\nu^{\ast}}(t) \|_{L^2}
		\lesssim \| \rho_{\nu\nu^{\ast}}(t) \|_{L^{p_1}}
		\lesssim \epsilon_1^2,
	\end{align*}
	where $1/p_1 = 1/2 + (2-|\alpha|)/3$. This concludes the proof of \Cref{lem:potential1v}.
\end{proof}

We need the following lemma to bound the nonlinear term that contains $V_{\mathrm{xc}}$, which is a generalization of \cite[Lemma~3.4]{Ginibre1994},  \cite[Lemma~2.3]{Hayashi1998a} to integral kernels.
\begin{lemma}\label{lem:powertype}
	Let $F:L^2_y(\mathbb{R}^d) \rightarrow L^2_y(\mathbb{R}^d)$ be a function defined by $F(\varphi) = \|\varphi\|_{L^2_y}^{2/d} \varphi$ for $\varphi \in L^2_y(\mathbb{R}^d)$. For any $\kappa \in (\dot{H}^m_x \cap L^{\infty}_x)(\mathbb{R}^d; L^2_y(\mathbb{R}^d))$, we have
	\[
	\| F(\kappa) \|_{\dot{H}^m_x L^2_y} \lesssim \| \kappa \|_{L^{\infty}_x L^2_y}^{2/d}\| \kappa \|_{\dot{H}^m_x L^2_y}
	\]
	for $m< 1 + 2/d$.
\end{lemma}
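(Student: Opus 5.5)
We prove the estimate with the difference characterization of Besov norms, set up earlier in Section~\ref{sec:notations}. This is natural because $F$ is only H\"older-smooth of order $1+2/d$ as a map on the Hilbert space $L^2_y$, and difference quotients need no pointwise chain rule. Throughout we use $\dot H^m_x L^2_y = \dot B^m_{2,2}(L^2)$, which holds because $L^2_y$ is a Hilbert space, so Plancherel applies componentwise. The argument splits into the cases $m\le 1$ and $1<m<1+2/d$. For $m$ near $1$ one can also combine the two regimes by interpolation.

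\emph{Case $0<m<1$.} Here the first-order difference characterization suffices. The map $F$ satisfies the Lipschitz-type bound
\[
\|F(a)-F(b)\|_{L^2_y}\lesssim (\|a\|_{L^2_y}+\|b\|_{L^2_y})^{2/d}\|a-b\|_{L^2_y},
\]
valid for any $a,b\in L^2_y$. This follows by writing $F(a)-F(b)=\|a\|^{2/d}(a-b)+(\|a\|^{2/d}-\|b\|^{2/d})b$ and using $|\|a\|^{2/d}-\|b\|^{2/d}|\le \|a-b\|^{2/d}$ when $2/d\le1$, or a mean value bound otherwise. Applying it with $a=\kappa(x+h)$ and $b=\kappa(x)$ gives
\[
\|\Delta_h^1F(\kappa)\|_{L^2_xL^2_y}\lesssim\|\kappa\|_{L^\infty_xL^2_y}^{2/d}\|\Delta^1_h\kappa\|_{L^2_xL^2_y},
\]
and the result follows. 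The case $m=1$ is handled the same way, or by the Fr\'echet chain rule below.

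\emph{Case $1<m<1+2/d$.} Write $m=1+s$ with $0<s<2/d\le1$. We have $\|F(\kappa)\|_{\dot H^m}\sim\|\nabla_xF(\kappa)\|_{\dot H^s}$, so the task is to estimate $\nabla_xF(\kappa)$ in $\dot B^s_{2,2}$. The map $F$ is Fr\'echet differentiable on $L^2_y\setminus\{0\}$, and on all of $L^2_y$ with $DF(0)=0$. Its derivative is
\[
DF(\varphi)[\eta]=\|\varphi\|^{2/d}\eta+\tfrac{2}{d}\|\varphi\|^{2/d-2}\Re\langle\eta,\varphi\rangle\varphi ,
\]
so $\partial_jF(\kappa)=DF(\kappa)[\partial_j\kappa]$. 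The key structural fact is that $DF$ is $2/d$-H\"older as an operator-valued map:
\[
\|DF(a)-DF(b)\|_{L^2_y\to L^2_y}\lesssim\|a-b\|_{L^2_y}^{2/d}.
\]
We will prove this by the usual scalar case split. If $\|a-b\|\ge\frac12\max(\|a\|,\|b\|)$, the bound is trivial since $\|DF(\varphi)\|\lesssim\|\varphi\|^{2/d}$. Otherwise $a$ and $b$ are comparable, the segment between them avoids $0$, and $D^2F$ is bounded by $\|\varphi\|^{2/d-1}$ along it.

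With this in hand we split
\[
\Delta_h^1\partial_jF(\kappa)=DF(\kappa(x+h))[\Delta_h^1\partial_j\kappa]+\big(DF(\kappa(x+h))-DF(\kappa(x))\big)[\partial_j\kappa(x)].
\]
The first term is bounded by $\|\kappa\|_{L^\infty L^2}^{2/d}\|\Delta_h\nabla\kappa\|_{L^2L^2}$, which produces $\|\kappa\|_{L^\infty}^{2/d}\|\kappa\|_{\dot H^m}$. The second term is bounded by $\|\,|\Delta_h\kappa|^{2/d}\,|\nabla\kappa|\,\|_{L^2_x}$, with the $L^2_y$-norms taken pointwise in $x$. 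We estimate it by H\"older with $\|\Delta_h\kappa\|_{L^{p}}^{2/d}\|\nabla\kappa\|_{L^q}$, then use Gagliardo--Nirenberg interpolation between $L^\infty_xL^2_y$ and $\dot H^m_xL^2_y$, as in the proof of Lemma~\ref{lem:potential1v}. The exponents are chosen so that $\nabla\kappa\in L^{q}$ carries homogeneity $\|\kappa\|_{\dot H^m}^{1/m}\|\kappa\|_{L^\infty}^{1-1/m}$ and $\Delta_h\kappa$ lands in a Besov space $\dot B^{sd/2}_{p,\,2\cdot 2/d}$ with the matching interpolation weight.

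The main obstacle is this last product estimate. We need
\[
\int\tau^{-1-2s}\sup_{|h|\le\tau}\|\,|\Delta_h\kappa|^{2/d}\nabla\kappa\|_{L^2}^2\,d\tau\lesssim\|\kappa\|_{L^\infty}^{4/d}\|\kappa\|_{\dot H^m}^2,
\]
and this requires $s<2/d$ together with careful bookkeeping of the exponents. We plan to follow the scalar proof of \cite[Lemma~2.3]{Hayashi1998a} and \cite[Lemma~3.4]{Ginibre1994} verbatim. That is legitimate because every step uses only the norm $\|\cdot\|_{L^2_y}$ pointwise in $x$, the H\"older continuity of $DF$ above, and vector-valued Gagliardo--Nirenberg and Besov embeddings, all of which hold for $L^2_y$-valued functions: the Fourier multiplier proofs transfer to Hilbert-space-valued functions.
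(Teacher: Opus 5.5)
Your case $0<m<1$, your proof that $DF$ is $2/d$-H\"older (a valid substitute for the paper's projection argument), and your bound on the term $DF(\kappa(x+h))[\Delta_h^1\partial_j\kappa]$ are fine. For $m=1$ only the chain-rule option works, since first differences do not characterize $\dot B^1_{2,2}$.

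\textbf{The gap.} It is the step you flag yourself, and it is not a matter of bookkeeping: the H\"older split cannot close. Once the second term is bounded by $\|\Delta_h\kappa\|_{L^p_xL^2_y}^{2/d}\|\nabla\kappa\|_{L^q_xL^2_y}$, the factor $\|\nabla\kappa\|_{L^q}$ no longer depends on $h$. The whole $\tau$-integral then falls on $\|\Delta_h\kappa\|_{L^p}^{4/d}$, and you need
\[
\|\kappa\|_{\dot B^{sd/2}_{p,4/d}(L^2)}^{4/d}\,\|\nabla\kappa\|_{L^q_xL^2_y}^{2}\lesssim\|\kappa\|_{L^\infty_xL^2_y}^{4/d}\,\|\kappa\|_{\dot H^m_xL^2_y}^{2}.
\]
Your homogeneity choice forces $q=2m$, hence $p=4m/(ds)$ and weight $\theta=ds/(2m)$. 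Interpolating between $\dot B^m_{2,2}$ and $\dot B^0_{\infty,\infty}$ only gives $\dot B^{\theta m}_{2/\theta,2/\theta}=\dot B^{sd/2}_{p,p}$, and $p>4/d$ because $s<m$.

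The stronger bound with third index $4/d$ is false. Superpose far-separated unit-height bumps: for each $j\le-10$, take $M_j$ bumps of width $2^{-j}$ with $M_j2^{j(2m-d)}=\alpha_j^2$. Then $\|\kappa\|_{L^\infty}\lesssim1$ and $\|\kappa\|_{\dot H^m}^2\lesssim\sum_j\alpha_j^2$, while $\|\kappa\|_{\dot B^{sd/2}_{p,4/d}}^{4/d}\gtrsim\sum_j\alpha_j^{2s/m}$. This diverges for $\alpha_j^2=|j|^{-m/s}$.

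Other choices of $q$ fail too. Test with $N$ separated unit bumps plus one far-away bump of width $N^{-1/(2m-d)}$: for $q\neq 2m$ the left side grows faster than $N\sim$ the right side. Your inequality is in fact true, but H\"older in $x$ decouples $\nabla\kappa$ from $\Delta_h\kappa$ and loses the square-summability over dyadic scales.

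\textbf{How the paper avoids it.} The paper never differentiates $F(\kappa)$. It uses second differences, which are valid for every $0<m<2$ and hence for all $m<1+2/d\le 2$ at once, together with the expansion
\[
F(\kappa_+)+F(\kappa_-)-2F(\kappa)=DF(\kappa)\,\Delta_h^2\kappa+\sum_{\pm}\int_0^1\bigl(DF(\kappa+\lambda(\kappa_\pm-\kappa))-DF(\kappa)\bigr)(\kappa_\pm-\kappa)\,\dd\lambda .
\]
The remainder is bounded by $\sum_\pm\|\kappa_\pm-\kappa\|_{L^2_y}^{1+2/d}$, a \emph{single} difference raised to the full power $r=1+2/d$. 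Integrating in $\tau$ gives $\|\kappa\|_{\dot B^{m/r}_{2r,2r}}^{2r}$. This is exactly the diagonal interpolation space, bounded by $\|\kappa\|_{\dot B^m_{2,2}}^2\|\kappa\|_{L^\infty_xL^2_y}^{2(r-1)}$.

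\textbf{How to repair your route.} If you keep the first-derivative approach, the product estimate must be done pointwise in $x$, with $L^2_y$-norms taken pointwise and $B=\|\kappa\|_{L^\infty_xL^2_y}$. Write $\|g\|_{\dot H^s}^2\sim\int|h|^{-d-2s}\|\Delta_h g\|_{L^2}^2\,\dd h$ and exchange the integrals. Use $|\Delta_h\kappa(x)|\le\min\bigl(2B,|h|\int_0^1|\nabla\kappa(x+th)|\,\dd t\bigr)$, Jensen, and a change of variables to get $\int|h|^{-d-2s}\min(2B,|\Delta_h\kappa(x)|)^{4/d}\,\dd h\lesssim B^{4/d-2s}\,\mathcal{M}(|\nabla\kappa|^{4/d})(x)^{sd/2}$, where $\mathcal{M}$ is the Hardy--Littlewood maximal function. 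Then H\"older with exponents $m$ and $m/s$, the maximal theorem on $L^{md/2}$, and Gagliardo--Nirenberg in the form $\|\nabla\kappa\|_{L^{2m}}^{2m}\lesssim B^{2s}\|\kappa\|_{\dot H^m}^2$ close the estimate.
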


\begin{proof}
	We regard $L^2_y(\mathbb{R}^d;\mathbb{C})$ as a real Hilbert space with inner product
	\[
	\langle f, g \rangle := \Re \langle f, g \rangle_{L^2} =  \Re \int_{\mathbb{R}^d} f(y) \overline{g(y)} \, \dd y.
	\]
	We evaluate the G\^ateaux derivative $F(\varphi) = \|\varphi\|_{L^2_y}^{2/d} \varphi$ as
	\begin{align*}
		DF(\varphi)(\psi)
		=  \lim_{t \rightarrow 0} \frac{F(\varphi+t\psi)-F(\varphi)}{t}
		= \|\varphi \|_{L^2_y}^{2/d}\psi +  \frac{2}{d} \| \varphi \|_{L^2_y}^{2/d -2}\langle \varphi, \psi \rangle \varphi
	\end{align*}
	for $\varphi \neq 0$ in $L^2_y(\mathbb{R}^d)$. When $\varphi=0$, we have
	\[
	DF(0)(\psi) = \lim_{t \rightarrow 0} \frac{F(t\psi)-F(0)}{t} =0.
	\]
	It follows that
	\begin{equation}\label{est:DF-1}
		\| D F (\varphi_1) \|_{\mathrm{op}} \lesssim \| \varphi_1 \|_{L^2_y}^{2/d}.
	\end{equation}
	Moreover, we also claim that $DF \in C^{0,2/d}$, that is,
	\begin{equation}\label{est:DF-2}
		\| DF(\varphi_1) - DF(\varphi_2) \|_{\mathrm{op}} \lesssim \| \varphi_1 - \varphi_2 \|_{L^2_y}^{2/d}.
	\end{equation}
	If one of $\varphi_1$ and $\varphi_2$ is zero, this follows from the previous bound. Hence, we may assume that both are nonzero. Set $\varphi_j = r_j e_j$ with $r_j = \| \varphi_j\|_{L^2_y}$ and $e_j = \varphi_j / \| \varphi_j \|_{L^2_y}$ for $j=1,2$. For any $\psi$ with $\| \psi \|_{L^2_y} = 1$, we write
	\[
	DF(\varphi_j)(\psi) = r_j^{2/d} \left( \psi + \frac{2}{d} P_{e_j}\psi \right), \qquad P_{e_j}\psi =\langle e_j, \psi \rangle e_j.
	\]
	Therefore,
	\[
	\| DF(\varphi_1) - DF(\varphi_2) \|_{\mathrm{op}} \lesssim | r_1^{2/d} - r_2^{2/d} | + \|r_1^{2/d}P_{e_1} - r_2^{2/d} P_{e_2} \|_{\mathrm{op}}.
	\]
	Since $0< 2/d \le 1$, we have
	\[
	|r_1^{2/d} -r_2^{2/d}| \lesssim |r_1-r_2|^{2/d} \lesssim \| \varphi_1 - \varphi_2 \|_{L^2_y}^{2/d}.
	\]
	It remains to bound the projection term.
	We may assume $r_1 \le r_2$ without loss of generality. Then
	\[
	r_1^{2/d} P_{e_1} - r_2^{2/d} P_{e_2} = r_1^{2/d}(P_{e_1} - P_{e_2}) + (r_1^{2/d} - r_2^{2/d}) P_{e_2}.
	\]
	Using $\| P_{e_1} - P_{e_2} \|_{\mathrm{op}} \lesssim \| e_1 - e_2 \|_{L^2}$, we obtain that
	\[
	\| r_1^{2/d} P_{e_1} - r_2^{2/d} P_{e_2} \|_{\mathrm{op}}
	\lesssim r_1^{2/d} \|P_{e_1} - P_{e_2}\|_{\mathrm{op}} + |r_1^{2/d} - r_2^{2/d}| \lesssim r_1^{2/d} \|e_1 - e_2 \|_{L^2_y} + \| \varphi_1 - \varphi_2 \|_{L^2_y}^{2/d}.
	\]
	We observe that
	\begin{align*}
		r_1 \| e_1 - e_2 \|_{L^2_y}
		&\le \| r_1 e_1 - r_2 e_2 \|_{L^2_y} + \| ( r_1 -r_2)e_2 \|_{L^2_y} \\
		&\lesssim \| \varphi_1- \varphi_2 \|_{L^2_y} + |r_1 - r_2| \\
		&\lesssim \| \varphi_1 - \varphi_2 \|_{L^2_y}.
	\end{align*}
	We finally use $\| e_1 - e_2 \|_{L^2} \le 2$ and $\varphi_j = r_j e_j$ to bound
	\begin{align*}
		r_1^{2/d} \| e_1 - e_2 \|_{L^2_y}
		\lesssim (r_1 \| e_1 - e_2\|_{L^2_y})^{2/d} \| e_1 - e_2 \|_{L^2_y}^{1-2/d}
		\lesssim \| \varphi_1 - \varphi_2 \|_{L^2_y}^{2/d},
	\end{align*}
	which completes the proof of \eqref{est:DF-2}. We remark that \eqref{est:DF-1} and \eqref{est:DF-2} prove $F\in C^{1,2/d}$.
	
	Denote $\kappa_{\pm}= \kappa(x\pm h, \cdot)$, $\kappa= \kappa(x, \cdot)$ and $r= 1+2/d$ for notational convenience. Applying the mean value theorem for $F$, we obtain that
	\[
	F(\kappa_{\pm})- F(\kappa) = \int_0^1 DF(\lambda \kappa_{\pm} + (1-\lambda)\kappa) (\kappa_{\pm} - \kappa) \, \dd \lambda.
	\]
	It follows that
	\[
	F(\kappa_{+}) + F(\kappa_{-}) - 2 F(\kappa)
	= DF(\kappa) (\kappa_{+} + \kappa_{-} - 2\kappa) + \sum_{\pm}  \int_0^1 (DF(\lambda \kappa_{\pm} + (1-\lambda)\kappa) - DF(\kappa))(\kappa_{\pm} -\kappa) \, \dd \lambda.
	\]
	Estimates on $DF(\cdot)$ yield
	\begin{align*}
		\| F(\kappa_{+}) + F(\kappa_{-}) - 2F(\kappa)\|_{L^2_y}
		&\lesssim \|DF(\kappa)\|_{\mathrm{op}} \|\kappa_{+} + \kappa_{-} - 2\kappa\|_{L^2} + \sum_{\pm} \|\kappa_{\pm} - \kappa \|_{L^2_y}^r \\
		&\lesssim \| \kappa \|_{L^2_y}^{2/d} \| \kappa_{+} + \kappa_{-} - 2\kappa \|_{L^2_y} + \sum_{\pm} \| \kappa_{\pm} - \kappa\|_{L^2_y}^r.
	\end{align*}
	Since $r = 1 +2/d$, taking $L^2_x$ norm gives
	\begin{align*}
		\| F(\kappa_{+}) + F(\kappa_{-}) - 2 F(\kappa) \|_{L^2_{x,y}}
		\lesssim \| \kappa\|_{L^{\infty}_xL^2_y}^{2/d} \| \kappa_{+} + \kappa_{-} - 2\kappa \|_{L^2_{x,y}} + \sum_{\pm} \| \kappa_{\pm} - \kappa \|_{L^{2r}_x L^2_y}^{r}.
	\end{align*}
	Using the definitions of Besov seminorm, see \Cref{sec:notations}, we obtain that
	\begin{align*}
		\| F(\kappa)\|_{\dot{B}^{m}_{2,2}L^2_y}^{2}
		&= \int_0^{\infty} \sup_{|h| \le t } \|F(\kappa_{+}) + F(\kappa_{-}) - 2F(\kappa)\|_{L^2_{x,y}}^2 \frac{\dd t} {t^{1+2m}} \\
		&\lesssim \int_0^{\infty} \left( \| \kappa \|_{L^{\infty}_x L^2_y}^{4/d} \sup_{|h| \le t } \| \kappa_{+} + \kappa_{-} - 2\kappa \|_{L^2_{x,y}}^2 + \sum_{\pm} \sup_{|h| \le t } \| \kappa_{\pm} - \kappa\|_{L^{2r}_x L^2_y}^{2r} \right) \frac{\dd t} {t^{1+2m}} \\
		&\lesssim \| \kappa \|_{L^{\infty}_xL^2_y}^{4/d} \| \kappa\|_{\dot{B}^{m}_{2,2}L^2_y}^{2} + \| \kappa \|_{\dot{B}^{m/r}_{2r,2r}L^2_y}^{2r},
	\end{align*}
	noting that $0 <m/r < 1$.
	Applying the real interpolation theorem for vector-valued Besov spaces \cite[Section~5]{Amann1997}, we have
	\[
	\| \kappa \|_{\dot{B}^{m/r}_{2r,2r}L^2_y}^{2r} \lesssim \| \kappa \|_{\dot{B}^{m}_{2,2}L^2_y}^{2} \| \kappa \|_{\dot{B}^{0}_{\infty, \infty}L^2_y}^{2(r-1)}.
	\]
	Also, $\| \kappa \|_{\dot{B}^{0}_{\infty, \infty}L^2_y} \lesssim \| \kappa \|_{L^{\infty}_xL^2_y}$ and $r =1 + 2/d$ so that
	\[
	\| F (\kappa) \|_{\dot{B}^{m}_{2,2}L^2_y} \lesssim \| \kappa \|_{L^{\infty}_xL^2_y}^{2/d} \| \kappa \|_{\dot{B}^{m}_{2,2}L^2_y}.
	\]
	Noting that $\| \kappa \|_{\dot{B}^{m}_{2,2}L^2_y} \sim \| \kappa \|_{\dot{H}^m_x L^2_y}$, the proof of \Cref{lem:powertype} is complete.
\end{proof}

Combining \Cref{lem:potential1v} and \Cref{lem:powertype}, we obtain the following bound.
\begin{corollary}\label{coro:NL}
	Let $\nu(t)$ be a local-in-time solution to \eqref{eqn:nu} on $[T^{-1}, 1]$, given by \Cref{lem:LWP}. Under the bootstrap assumption \eqref{assm:apriori}, we have
	\[
	\| V(\rho_{\nu\nu^{\ast}})\nu(t) \|_{H^s_x L^2_y} \lesssim \epsilon_1^{1+2/d} t^{-\delta s/m}
	\]
	for $1 \le s \le m$ and $T^{-1} \le t \le 1$.
\end{corollary}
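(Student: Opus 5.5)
We split $V(\rho_{\nu\nu^{\ast}})\nu = V_{\mathrm{H},\nu}\nu + V_{\mathrm{xc},\nu}\nu$ and treat the two pieces separately. Throughout we use the bootstrap bounds $\|\nu(t)\|_{L^2_xL^2_y}+\|\nu(t)\|_{L^\infty_xL^2_y}\le\epsilon_1$ and $\|\nu(t)\|_{H^m_xL^2_y}\le \epsilon_1 t^{-\delta}$. By interpolation these give $\|\nu(t)\|_{H^s_xL^2_y}\lesssim \epsilon_1 t^{-\delta s/m}$ for $0\le s\le m$. Since $t\le 1$ and $\epsilon_1\le 1$, it suffices to bound each piece by $\epsilon_1^{1+2/d}t^{-\delta s/m}$, and the cubic Hartree contribution $\epsilon_1^3$ is dominated by this.

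\textbf{Power-type part.} Note that $V_{\mathrm{xc},\nu}\nu = \mu F(\nu)$, with $F$ as in \Cref{lem:powertype}, since $\rho_{\nu\nu^\ast}(x)^{1/d}=\|\nu(x,\cdot)\|_{L^2_y}^{2/d}$. The $L^2_{x,y}$ part of the norm is immediate:
\[
\|F(\nu)\|_{L^2_{x,y}}\le \|\nu\|_{L^\infty_xL^2_y}^{2/d}\|\nu\|_{L^2_{x,y}}\lesssim\epsilon_1^{1+2/d}.
\]
For the homogeneous part, apply \Cref{lem:powertype} with $m$ replaced by $s$; this is allowed because $s\le m<1+2/d$. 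It gives
\[
\|F(\nu)\|_{\dot H^s_xL^2_y}\lesssim \|\nu\|_{L^\infty_xL^2_y}^{2/d}\|\nu\|_{\dot H^s_xL^2_y}\lesssim \epsilon_1^{1+2/d}t^{-\delta s/m}.
\]
Since $t^{-\delta s/m}\ge1$, the two bounds combine into the claim for this piece.

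\textbf{Hartree part.} We bound $V_{\mathrm{H},\nu}\nu$ in $H^s_x L^2_y$ by a fractional Leibniz (Kato--Ponce type) estimate for the $L^2_y$-valued function $\nu$ multiplied by the scalar $V_{\mathrm{H},\nu}$. The intended estimate is
\[
\||\nabla_x|^s(V\nu)\|_{L^2_{x,y}}\lesssim \|V\|_{L^\infty}\||\nabla_x|^s\nu\|_{L^2_{x,y}}+\||\nabla_x|^sV\|_{L^{p}}\|\nu\|_{L^{q}_xL^2_y},\qquad \tfrac1p+\tfrac1q=\tfrac12 .
\]
This holds in the vector-valued setting, since the multiplier acts only in $x$ and one may apply the scalar argument componentwise in an orthonormal basis in $y$, or use Littlewood--Paley paraproducts with Hilbert-valued square functions.

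For the first term, \Cref{lem:potential1v} gives $\|V\|_{L^\infty}\lesssim\epsilon_1^2$, so it is bounded by $\epsilon_1^3t^{-\delta s/m}$.

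For the second term, take $p=2$ and $q=\infty$, which requires $\||\nabla|^sV_{\mathrm{H},\nu}\|_{L^2}$ with $1\le s\le m<2$. We get this by interpolating between the $|\alpha|=1$ and $|\alpha|=2$ bounds of \Cref{lem:potential1v}:
\[
\||\nabla|^sV\|_{L^2}\lesssim \epsilon_1^2t^{-\delta(s-1)/m} \quad (d=2),\qquad \||\nabla|^sV\|_{L^2}\lesssim \epsilon_1^2 \quad (d=3).
\]
Multiplying by $\|\nu\|_{L^\infty_xL^2_y}\le\epsilon_1$, both cases are bounded by $\epsilon_1^3t^{-\delta s/m}$. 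The $L^2$ part is trivial: $\|V\nu\|_{L^2_{x,y}}\le\|V\|_{L^\infty}\|\nu\|_{L^2_{x,y}}\lesssim\epsilon_1^3$.

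\textbf{Expected obstacle.} The main point to justify is the vector-valued fractional Leibniz rule. If it proves awkward, we would instead use the Besov difference characterization as in the proof of \Cref{lem:powertype}. Writing $s=1+\theta$ with $\theta\in[0,1)$, one derivative is handled by the product rule $\partial(V\nu)=(\partial V)\nu+V\partial\nu$. The remaining $\theta$ derivatives are then controlled by first-order differences $\Delta_h^1$, using $\|\partial V\|_{L^2\cap L^{2m}}$ (available from the bounds on $\partial\rho$ in \eqref{est:density-v} via Calder\'on--Zygmund and Hardy--Littlewood--Sobolev) together with $\|\nu\|_{L^\infty_xL^2_y}$ and $\|\nu\|_{\dot H^s}$.
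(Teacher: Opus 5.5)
Your proposal is essentially the paper's proof: the same split into $V_{\mathrm{H}}$ and $V_{\mathrm{xc}}$, the fractional Leibniz rule with $\|V_{\mathrm{H},\nu}\|_{L^\infty}$ and $\|V_{\mathrm{H},\nu}\|_{\dot H^s}$ supplied by \Cref{lem:potential1v}, and \Cref{lem:powertype} applied at regularity $s$ for the power term. You are in fact more explicit than the paper about the $L^2$ part and about the interpolation that gives the $\dot H^s$ bound on $V_{\mathrm{H},\nu}$.

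One caution concerns your justification of the vector-valued Leibniz rule. Applying the scalar inequality componentwise in an orthonormal basis of $L^2_y$ only yields $\sum_j \|\nu_j\|_{L^\infty_x}^2$. That is an $\ell^2_j L^\infty_x$ quantity, and it is not controlled by $\|\nu\|_{L^\infty_x L^2_y}$; this is exactly the $\ell^2(L^p)$ versus $L^p(\ell^2)$ issue the paper raises in its remark on the coupled system. You should therefore rely on the Hilbert-valued paraproduct argument or on your Besov-difference fallback instead.
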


\begin{proof}
	We recall $V=V_{\mathrm{H}} + V_{\mathrm{xc}}$. We estimate each term separately.
	For the term with $V_{\mathrm{H}}(\rho_{\nu\nu^{\ast}})$, it follows from the fractional Leibniz rule and \Cref{lem:potential1v} that
	\begin{align*}
		\| V_{\mathrm{H}}(\rho_{\nu\nu^{\ast}}) \nu(t) \|_{H^s_x L^2_y}
		\lesssim \| V_{\mathrm{H}}(\rho_{\nu\nu^{\ast}})(t) \|_{L^{\infty}} \| \nu(t)\|_{H^s_x L^2_y}
		+ \| V_{\mathrm{H}}(\rho_{\nu\nu^{\ast}})(t) \|_{\dot{H}^s} \| \nu (t) \|_{L^{\infty}_xL^2_y}
		\lesssim \epsilon_1^3 t^{-\delta s/m}.
	\end{align*}
	For the term with $V_{\mathrm{xc}}(\rho_{\nu\nu^{\ast}})$, we apply \Cref{lem:powertype} to obtain
	\[
	\| V_{\mathrm{xc}}(\rho_{\nu\nu^{\ast}}) \nu(t) \|_{H^s_x L^2_y} \lesssim \| \nu(t) \|_{L^{\infty}_xL^2_y}^{2/d} \| \nu(t) \|_{H^s_x L^2_y}
	\lesssim \epsilon_1^{1+2/d} t^{-\delta s/m}.
	\]
	Combining these estimates, we obtain that
	\[
	\| V(\rho_{\nu\nu^{\ast}}) \nu(t) \|_{H^s_x L^2_y} \lesssim \epsilon_1^3 t^{-\delta s/m} + \epsilon_1^{1+2/d} t^{-\delta s/m} \lesssim \epsilon_1^{1+2/d} t^{-\delta s/m}.
	\]
\end{proof}

\subsection{Energy Estimates}
\begin{lemma}\label{lem:energy22}
	Let $\nu(t)$ be a local-in-time solution to \eqref{eqn:nu} on $[T^{-1}, 1]$, given by \Cref{lem:LWP}. Under the bootstrap assumption \Cref{assm:apriori}, there holds
	\[
	t^{\delta} \| \nu(t) \|_{H^m_x L^2_y} \lesssim \epsilon_0 + \epsilon_1^{1+2/d}
	\]
	for $T^{-1} \le t \le 1$.
\end{lemma}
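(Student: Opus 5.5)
The plan is to work with the Duhamel formulation of \eqref{eqn:nu} rather than a differentiated energy identity. This avoids having to justify commutator or regularity manipulations for a solution that is only known to lie in $C([T^{-1},1];H^m_xL^2_y)$. Writing $\mathrm{U}(t)=\ee^{\ii t\Delta_x/2}$, which acts only on the $x$-variable, the solution of \eqref{eqn:nu} given by \Cref{lem:LWP} satisfies
\[
\nu(t)=\mathrm{U}(t-1)\nu_1-\ii\int_1^t \mathrm{U}(t-s)\,\frac{1}{s}\,V(\rho_{\nu\nu^{\ast}})\nu(s)\,\dd s,\qquad T^{-1}\le t\le 1 .
\]
Since $\mathrm{U}(\tau)$ is a Fourier multiplier in $x$ of modulus one, it commutes with $\langle\nabla_x\rangle^m$ and is unitary on $H^m_xL^2_y$. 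Taking the $H^m_xL^2_y$ norm and using Minkowski's inequality therefore gives
\[
\|\nu(t)\|_{H^m_xL^2_y}\le \|\nu_1\|_{H^m_xL^2_y}+\int_t^1 \frac{1}{s}\,\|V(\rho_{\nu\nu^{\ast}})\nu(s)\|_{H^m_xL^2_y}\,\dd s .
\]

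The first term is at most $\epsilon_0$ by the hypothesis of \Cref{prop:bootstrap}. For the integrand we use \Cref{coro:NL} with $s=m$; this is admissible because $m>d/2\ge 1$. It yields $\|V(\rho_{\nu\nu^{\ast}})\nu(s)\|_{H^m_xL^2_y}\lesssim \epsilon_1^{1+2/d}s^{-\delta}$ for $T^{-1}\le s\le 1$. Hence
\[
\|\nu(t)\|_{H^m_xL^2_y}\le \epsilon_0 + C\epsilon_1^{1+2/d}\int_t^1 s^{-1-\delta}\,\dd s
= \epsilon_0 + \frac{C}{\delta}\,\epsilon_1^{1+2/d}\,(t^{-\delta}-1).
\]
Multiplying by $t^{\delta}\le 1$ gives $t^{\delta}\|\nu(t)\|_{H^m_xL^2_y}\le \epsilon_0+C\delta^{-1}\epsilon_1^{1+2/d}$. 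Since $\delta$ is a fixed constant depending only on $m$ and $d$, this is the claimed bound, with implicit constant independent of $T$.

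The only genuinely delicate point is the non-integrable factor $s^{-1}$ coming from the long-range structure. A bound on the nonlinearity that is uniform in $s$ would produce a logarithmic divergence $\log(1/t)$, and that could not be closed without growth. The remedy is exactly what the bootstrap norm \eqref{def:bootstrapnorm} builds in. We allow the $H^m$ norm to grow like $t^{-\delta}$, and \Cref{coro:NL} transfers this mild growth to the nonlinearity. The combination $s^{-1-\delta}$ then integrates to $t^{-\delta}/\delta$, which is absorbed by the weight $t^{\delta}$.

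Everything else is already contained in the earlier estimates: the Hartree part is handled in \Cref{lem:potential1v} together with the fractional Leibniz rule, and the exchange part through the vector-valued chain rule of \Cref{lem:powertype}. The Duhamel identity itself is available because the local solution from \Cref{lem:LWP} is constructed by contraction in precisely this integral form.
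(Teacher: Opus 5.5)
Your proposal is correct and takes essentially the same route as the paper. Both use the Duhamel formula with unitarity of $\ee^{\ii t\Delta_x/2}$ on $H^m_xL^2_y$, then \Cref{coro:NL} at $s=m$ to bound the integrand by $\epsilon_1^{1+2/d}\tau^{-1-\delta}$, so that the resulting $t^{-\delta}/\delta$ is absorbed by the weight $t^{\delta}$.
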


\begin{proof}
	Recall that
	\[
	\nu(t,x,y)
	=  \ee^{\ii (t-1)\Delta_x/2} \nu_1(x,y)
	- \ii \int_1^t  \tau^{-1} \ee^{\ii (t-\tau)\Delta_x/2} V_{\nu} (\tau,x)  \nu(\tau,x,y) \, \dd \tau,
	\]
	where we recall $V_{\nu} = V(\rho_{\nu\nu^{\ast}})$. We bound
	\begin{equation}\label{eqn:energy22-2}
		t^{\delta}\| \nu(t) \|_{H^m_x L^2_y}
		\lesssim t^{\delta} \|  \nu_1 \|_{H^m_x L^2_y} +  t^{\delta}\int_t^1  \tau^{-1}  \| V_{\nu} \nu(\tau) \|_{H^m_x L^2_y}   \, \dd \tau.
	\end{equation}
	Applying \Cref{coro:NL}, we obtain that
	\[
	t^{\delta} \| \nu(t) \|_{H^m_x L^2_y}
	\lesssim \epsilon_0 +  t^{\delta} \int_t^1 \left(\epsilon_1^3 \tau^{-1-\delta} + \epsilon_1^{1+2/d} \tau^{-1-\delta} \right) \dd\tau \lesssim \epsilon_0 + \epsilon_1^{1+2/d},
	\]
	which completes the proof.
\end{proof}

\subsection{Remainder Estimates}\label{sec:remainder}

Consider the profile $\eta(t) =  \ee^{-\ii t\Delta_x/2} \nu(t)$. We can write \Cref{eqn:nu} as
\begin{equation}\label{eqn:f}
	\begin{aligned}
		\ii \partial_t \eta
		= \frac{1}{t} \ee^{-\ii t\Delta_x/2} V_{\nu} \nu
		=:  \frac{1}{t}  V_{\eta} \eta +R,
	\end{aligned}
\end{equation}
where we write
\begin{equation}\label{eqn:R}
	R(t)
	:= \frac{1}{t}\left( \ee^{-\ii t\Delta_x/2} V_{\nu} (t) \nu(t)  - V_{\eta} (t) \eta (t) \right),
\end{equation}
recalling that $V_{\nu} = V(\rho_{\nu\nu^{\ast}})$ and $ V_\eta=V(\rho_{\eta\eta^{\ast}})$.
We prove that $R(t)$ can be regarded as a remainder.
\begin{lemma}\label{lem:remainder2}
	Let $d=2,3$ and let $m$ satisfy $d/2<m < 1+2/d$. Choose $\theta$ such that $0 < \theta < (2m-d)/4$. Let $\nu(t)$ be a local-in-time solution to \eqref{eqn:nu} on $[T^{-1}, 1]$, given by \Cref{lem:LWP}. Then, for any $s$ with $d/2 <s \le m-2\theta$, under the bootstrap assumption \eqref{assm:apriori}, we have
	\begin{equation}\label{est:remainder2}
		\| R(t) \|_{(L^2_x \cap L^{\infty}_x)L^2_y} \lesssim \epsilon_1^{1+2/d} t^{-1+\theta-\delta(3s+2\theta)/m}
	\end{equation}
	for $T^{-1} \le t \le 1$.
\end{lemma}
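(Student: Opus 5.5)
The plan is to split $R$ into a ``free propagator minus identity'' piece and a ``nonlinearity difference'' piece. Writing $\nu = \ee^{\ii t\Delta_x/2}\eta$, the definition \eqref{eqn:R} gives
\[
tR(t) = \big(\ee^{-\ii t\Delta_x/2}-1\big)\big(V_\nu\nu\big)(t) + \big(V_\nu \nu - V_\eta \eta\big)(t) =: R_1(t) + R_2(t).
\]
Both pieces will be controlled by the elementary bound
\[
\|(\ee^{\pm\ii t\Delta_x/2}-1)g\|_{H^{\sigma}_xL^2_y}\lesssim t^{\theta}\|g\|_{H^{\sigma+2\theta}_xL^2_y}, \qquad 0<\theta\le 1.
\]
It follows from $|\ee^{\ii t|\xi|^2/2}-1|\lesssim (t|\xi|^2)^{\theta}$ applied in Fourier space in $x$, with $y$ treated as a parameter. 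The $L^\infty_xL^2_y$ parts will always be handled by the vector-valued Sobolev embedding $H^s_xL^2_y\hookrightarrow L^\infty_xL^2_y$, valid since $s>d/2$.

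\textbf{Estimate for $R_1$.} I would apply the bound above with $\sigma = s$, using $s+2\theta\le m$. Combined with \Cref{coro:NL} at the regularity $s+2\theta\in[1,m]$, this gives
\[
\|R_1\|_{(L^2_x\cap L^\infty_x)L^2_y}\lesssim \|R_1\|_{H^s_xL^2_y}\lesssim t^{\theta}\|V_\nu\nu\|_{H^{s+2\theta}_xL^2_y}\lesssim \epsilon_1^{1+2/d}\,t^{\theta-\delta(s+2\theta)/m}.
\]
This is better than the claimed exponent, because $t\le 1$ and $\delta(s+2\theta)/m\le\delta(3s+2\theta)/m$.

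\textbf{Preliminary bounds on $\eta$ and $\nu-\eta$.} Before treating $R_2$, I would record two bounds.
\begin{itemize}
\item Since $\ee^{-\ii t\Delta_x/2}$ is unitary on $H^\sigma_x L^2_y$, we have $\|\eta\|_{H^s_xL^2_y}=\|\nu\|_{H^s_xL^2_y}\lesssim \epsilon_1 t^{-\delta s/m}$. Interpolating between the $L^2$ and $t^\delta H^m$ parts of the bootstrap norm, and then using the embedding, gives $\|\eta\|_{L^\infty_xL^2_y}\lesssim\epsilon_1 t^{-\delta s/m}$.
\item By the propagator bound, $\|\nu-\eta\|_{(L^2_x\cap L^\infty_x)L^2_y}\lesssim t^\theta\|\eta\|_{H^{s+2\theta}_xL^2_y}\lesssim\epsilon_1 t^{\theta-\delta(s+2\theta)/m}$.
\end{itemize}

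\textbf{Exchange--correlation part of $R_2$.} This is where I expect the main obstacle, because the map $F(\varphi)=\|\varphi\|^{2/d}_{L^2_y}\varphi$ from \Cref{lem:powertype} is only $C^{1,2/d}$. However, a pointwise-in-$x$ Lipschitz-type bound is all that is needed here. From \eqref{est:DF-1} and the mean value formula used in the proof of \Cref{lem:powertype},
\[
\|F(\nu)-F(\eta)\|_{L^2_y}\lesssim \big(\|\nu\|_{L^2_y}+\|\eta\|_{L^2_y}\big)^{2/d}\,\|\nu-\eta\|_{L^2_y}
\]
pointwise in $x$. Taking $L^2_x$ and $L^\infty_x$ norms gives
\[
\epsilon_1^{2/d}t^{-2\delta s/(dm)}\cdot\epsilon_1 t^{\theta-\delta(s+2\theta)/m}.
\]
This is within $t^{\theta-\delta(3s+2\theta)/m}$, since $2/d\le 2$; the bookkeeping of these $\delta$-losses is the main place where care is required.

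\textbf{Hartree part of $R_2$.} I would write
\[
V_{\mathrm H,\nu}\nu-V_{\mathrm H,\eta}\eta = V_{\mathrm H,\nu}(\nu-\eta) + \lambda\big(|\cdot|^{-1}*(\rho_{\nu\nu^*}-\rho_{\eta\eta^*})\big)\eta .
\]
The first term is bounded using \Cref{lem:potential1v}. For the second term, I would use the interpolation inequality \eqref{est:interpolation-riesz}, noting that
\[
|\rho_{\nu\nu^*}-\rho_{\eta\eta^*}|\le(\|\nu\|_{L^2_y}+\|\eta\|_{L^2_y})\|\nu-\eta\|_{L^2_y},
\]
so its $L^1\cap L^\infty$ norm is $\lesssim\epsilon_1^2t^{\theta-2\delta s/m}$, the worst case being $t^{\theta-\delta(s+2\theta)/m}\,t^{-\delta s/m}$. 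Multiplying by $\|\eta\|_{L^2\cap L^\infty}\lesssim\epsilon_1 t^{-\delta s/m}$ gives $\epsilon_1^3t^{\theta-\delta(3s+2\theta)/m}$, which is exactly the stated exponent.

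\textbf{Conclusion.} Since $\epsilon_1^3\lesssim\epsilon_1^{1+2/d}$, collecting the estimates for $R_1$ and $R_2$ and dividing by $t$ yields \eqref{est:remainder2}.
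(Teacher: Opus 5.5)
Your proposal is correct and follows essentially the same route as the paper. The shared steps are:
\begin{enumerate}
\item the split $tR=(\ee^{-\ii t\Delta_x/2}-1)V_\nu\nu+(V_\nu\nu-V_\eta\eta)$;
\item the $t^{\theta}$ gain from $|\ee^{\ii t|\xi|^2/2}-1|\lesssim (t|\xi|^2)^{\theta}$;
\item \Cref{coro:NL} at regularity $s+2\theta$ for $R_1$;
\item pointwise Lipschitz bounds for $R_2$, namely the mean value theorem for $F$ and the Riesz interpolation \eqref{est:interpolation-riesz} for the Hartree term.
\end{enumerate}
The only differences are cosmetic: your Hartree splitting is the mirror image of the paper's, and you track $L^\infty_xL^2_y$ norms directly rather than through $H^s_xL^2_y$.

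One small slip: the $L^1\cap L^\infty$ bound on $\rho_{\nu\nu^*}-\rho_{\eta\eta^*}$ should read $\epsilon_1^2t^{\theta-\delta(2s+2\theta)/m}$, as your own stated worst case shows, not $\epsilon_1^2t^{\theta-2\delta s/m}$. The corrected exponent is exactly what you use next to obtain $t^{\theta-\delta(3s+2\theta)/m}$.
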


\begin{proof}
	For $0 < \theta < (2m-d)/4$, we may choose $s$ satisfying $s> d/2$ and $s+2\theta \le m$. We will frequently use the estimate
	\[
	\| (\ee^{\pm \ii t\Delta_x /2} -1) u \|_{L^2_x} \lesssim t^{\theta} \left\| |\nabla_x|^{2\theta} u \right\|_{L^2_x},
	\]
	which is valid for $0 <\theta < 1$.
	Write $R= R_1 + R_2$, where
	\begin{align*}
		R_1(t) &= \frac{1}{t} ( \ee^{-\ii t\Delta_x/2} - 1 ) V_{\nu} (t) \nu(t) , \\
		R_2(t) &= \frac{1}{t} \left( V_\nu(t) \nu(t)- V_\eta(t) \eta(t) \right).
	\end{align*}
	
	We first estimate $R_1$.
	By Sobolev embedding and \Cref{coro:NL}, we get
	\begin{equation}\label{est:R1}
		\begin{aligned}
			\| R_1(t) \|_{(L^2_x \cap L^{\infty}_x)L^2_y}
			&\lesssim t^{-1} \left\| (\ee^{-\ii t\Delta_x /2} -1 ) V_{\nu} (t)\nu(t) \right\|_{H^s_x L^2_y}\\
			&\lesssim t^{-1+\theta} \| V_\nu(t) \nu(t)\|_{H^{s+2\theta}_x L^2_y}\\
			&\lesssim \epsilon_1^{1+2/d} t^{-1+ \theta -\delta(s+2\theta)/m}.
		\end{aligned}
	\end{equation}
	
	We now estimate $R_2$. For $j\in \{\mathrm{H},\mathrm{xc}\}$, we denote $F_{j,\nu}= V_{j,\nu}\nu$, where we recall that $V_{j,\nu}=V_j(\rho_{\nu\nu^*})$. We write $R_2 = R_{2,\mathrm{H}}+ R_{2,\mathrm{xc}}$, where
	\begin{align*}
		R_{2,j}(t) := \frac{1}{t} \Big( V_{j,\nu}(t) \nu(t) - V_{j, \eta}(t) \eta(t) \Big) = \frac{1}{t} \left( F_{j,\nu}(t) - F_{j,\eta}(t) \right)
	\end{align*}
	for $j\in \{\mathrm{H},\mathrm{xc}\}$.
	We claim for $s>d/2$ that
	\begin{equation}\label{est:F1v-F1f}
		\| F_{\mathrm{H},\nu} - F_{\mathrm{H},\eta}\|_{(L^2_x \cap L^{\infty}_x)L^2_y}
		\lesssim \| \nu\|_{H^s_x L^2_y}^2 \| \nu - \eta\|_{H^s_x L^2_y}
	\end{equation}
	and
	\begin{equation}\label{est:F2v-F2f}
		\| F_{\mathrm{xc},\nu} -F_{\mathrm{xc},\eta}\|_{(L^2_x \cap L^{\infty}_x)L^2_y}
		\lesssim \| \nu \|_{H^s_x L^2_y}^{2/d} \| \nu-\eta \|_{H^s_x L^2_y}.
	\end{equation}
	
	For the $F_{\mathrm{H}}$ term, using the definition $V_{\mathrm{H}} (\rho) = \lambda|\cdot|^{-1} \ast_x \rho$, we write
	\begin{equation}\label{eqn:F1v-F1f}
		\begin{aligned}
			F_{\mathrm{H},\nu} - F_{\mathrm{H},\eta}
			&= \lambda\left(|\cdot|^{-1} \ast \rho_{\nu\nu^{\ast}} \right) \nu - \lambda\left(|\cdot|^{-1} \ast \rho_{\eta\eta^{\ast}}\right) \eta\\
			&= \lambda\left(|\cdot|^{-1} \ast (\rho_{\nu\nu^{\ast}} - \rho_{\eta\eta^{\ast}} ) \right) \nu
			+ \lambda\left(|\cdot|^{-1} \ast \rho_{\eta\eta^{\ast}} \right) (\nu-\eta).
		\end{aligned}
	\end{equation}
	Recall from \Cref{est:interpolation-riesz} that
	\[
	\| |\cdot|^{-1} \ast \rho \|_{L^{\infty}(\mathbb{R}^d)} \lesssim\| \rho \|_{L^1(\mathbb{R}^d)}^{(d-1)/d} \| \rho \|_{L^{\infty}(\mathbb{R}^d)}^{1/d}
	\]
	for any $\rho \in (L^1 \cap L^{\infty})(\mathbb{R}^d)$.
	Moreover, for $p \in [1,\infty]$, using that $\eta(t)=\ee^{-\ii t\Delta_x/2}\nu(t)$ and that the group $\ee^{-\ii t\Delta/2}$ is unitary on $H^s$, we get
	\[
	\left\| \rho_{\nu\nu^{\ast}} - \rho_{\eta\eta^{\ast}} \right\|_{L^p_x} \lesssim ( \|\nu \|_{L^{2p}_x L^2_y} + \| \eta \|_{L^{2p}_x L^2_y} ) \| \nu-\eta\|_{L^{2p}_x L^2_y} \lesssim \| \nu \|_{H^s_x L^2_y} \| \nu-\eta \|_{H^s_x L^2_y},
	\]
	and similarly
	\[
	\left\| \rho_{\eta\eta^{\ast}} \right\|_{L^p_x} \lesssim \| \eta \|_{L^{2p}_x L^2_y}^2 \lesssim \| \eta \|_{H^s_x L^2_y}^2 = \| \nu \|_{H^s_x L^2_y}^2.
	\]
	Combining these bounds, we obtain from \eqref{eqn:F1v-F1f} that
	\begin{align*}
		&\| F_{\mathrm{H},\nu} - F_{\mathrm{H},\eta} \|_{(L^2_x \cap L^{\infty}_x)L^2_y} \\
		&\lesssim \left\| |\cdot|^{-1} \ast (\rho_{\nu\nu^{\ast}} - \rho_{\eta\eta^{\ast}}) \right\|_{L^{\infty}} \| \nu \|_{(L^2_x \cap L^{\infty}_x)L^2_y}
		+ \left\| |\cdot|^{-1} \ast \rho_{\eta\eta^{\ast}} \right\|_{L^{\infty}} \| \nu-\eta \|_{(L^2_x \cap L^{\infty}_x)L^2_y} \\
		&\lesssim \| \nu \|_{H^s_x L^2_y}^2 \| \nu-\eta \|_{H^s_x L^2_y},
	\end{align*}
	which proves \Cref{est:F1v-F1f}.
	
	For the $F_{\mathrm{xc}}$ term, we recall $V_{\mathrm{xc}}(\rho) = \mu\rho^{1/d}$, so that
	\[
	F_{\mathrm{xc},\nu} - F_{\mathrm{xc},\eta} = \mu\rho_{\nu\nu^{\ast}}^{1/d} \nu- \mu\rho_{\eta\eta^{\ast}}^{1/d} \eta.
	\]
	Since $F(\varphi) = \| \varphi\|_{L^2_y}^{2/d} \varphi$ satisfies $\| DF(\varphi) \|_{\mathrm{op}} \lesssim \| \varphi \|_{L^2}^{2/d}$, we apply the mean value theorem to get
	\[
	\|F_{\mathrm{xc},\nu} - F_{\mathrm{xc},\eta} \|_{L^2_y} \lesssim \left( \| \nu \|_{L^2_y}^{2/d} + \| \eta \|_{L^2_y}^{2/d} \right) \| \nu - \eta \|_{L^2_y}.
	\]
	Taking the $L^2_x \cap L^{\infty}_x$ norm, and using Sobolev embedding, it follows that
	\begin{align*}
		\| F_{\mathrm{xc},\nu} - F_{\mathrm{xc},\eta} \|_{(L^2_x \cap L^{\infty}_x)L^2_y}
		&\lesssim \left( \| \nu \|_{L^{\infty}_x L^2_y}^{2/d} + \| \eta \|_{L^{\infty}_x L^2_y}^{2/d} \right) \| \nu - \eta \|_{(L^2_x \cap L^{\infty}_x)L^2_y}
		\lesssim \| \nu \|_{H^s_x L^2_y}^{2/d} \|\nu-\eta \|_{H^s_x L^2_y}.
	\end{align*}
	This yields \Cref{est:F2v-F2f}.
	
	Using \Cref{est:F1v-F1f}, \Cref{est:F2v-F2f}, and the bootstrap assumption \Cref{assm:apriori}, we may bound $R_2$ by
	\begin{equation}\label{est:R2}
		\begin{aligned}
			\| R_{2}(t) \|_{(L^2_x \cap L^{\infty}_x)L^2_y}
			&\lesssim t^{-1} \left( \| \nu(t)\|_{H^s_x L^2_y}^2 + \| \nu(t)\|_{H^s_x L^2_y}^{2/d} \right) \| (1-\ee^{-\ii t\Delta/2})\nu(t) \|_{H^s_x L^2_y} \\
			&\lesssim t^{-1+\theta} \left( \| \nu(t)\|_{H^s_x L^2_y}^2 + \| \nu(t)\|_{H^s_x L^2_y}^{2/d} \right) \| \nu(t) \|_{H^{s+2\theta}_x L^2_y} \\
			&\lesssim \epsilon_1^{1+2/d} t^{-1 + \theta -\delta (3s+2\theta)/m}.
		\end{aligned}
	\end{equation}
	Combining \Cref{est:R1} and \Cref{est:R2}, we obtain \Cref{est:remainder2}, completing the proof of \Cref{lem:remainder2}.
\end{proof}

Now we are ready to prove the following lemma.
\begin{lemma}\label{lem:decay}
	Let $\nu(t)$ be a local-in-time solution to \eqref{eqn:nu} on $[T^{-1}, 1]$, given by \Cref{lem:LWP}. Under the bootstrap assumption \eqref{assm:apriori}, we have
	\begin{equation}\label{claim:v}
		\| \nu(t) \|_{(L^2_x \cap L^{\infty}_x) L^2_y}  \lesssim \epsilon_0 + \epsilon_1^{1+2/d}
	\end{equation}
	for $T^{-1} \le t \le 1$.
\end{lemma}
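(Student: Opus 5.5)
The plan is the Hayashi--Naumkin ODE argument, applied to the profile $\eta(t)=\ee^{-\ii t\Delta_x/2}\nu(t)$ and run pointwise in $x$ with values in $L^2_y$. The $L^2_{x,y}$ part is immediate. Since $V_\nu$ is real-valued and acts by multiplication in $x$, \eqref{eqn:nu} conserves $\|\nu(t)\|_{L^2_{x,y}}$. Hence $\|\nu(t)\|_{L^2_{x,y}}=\|\nu_1\|_{L^2_{x,y}}\le\epsilon_0$, and only the $L^\infty_xL^2_y$ bound needs work.

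\emph{Step 1: pointwise bound on $\eta$.} Fix $x$ and use \eqref{eqn:f}, $\ii\partial_t\eta=t^{-1}V_\eta\eta+R$. Because $V_\eta(t,x)$ is a real scalar for each fixed $x$,
\[
\partial_t\|\eta(t,x,\cdot)\|_{L^2_y}^2 = 2\,\Im\langle R(t,x,\cdot),\eta(t,x,\cdot)\rangle_{L^2_y},
\]
so the long-range term $t^{-1}V_\eta\eta$ drops out entirely. Dividing by $\|\eta\|_{L^2_y}$ (or regularizing with $(\|\eta\|^2+\varepsilon)^{1/2}$) and integrating from $t$ to $1$ gives
\[
\|\eta(t)\|_{L^\infty_xL^2_y}\le \|\eta(1)\|_{L^\infty_xL^2_y}+\int_t^1\|R(\tau)\|_{L^\infty_xL^2_y}\,\dd\tau .
\]
The initial term is controlled by Sobolev embedding ($m>d/2$) and unitarity of $\ee^{-\ii\Delta/2}$ on $H^m$:
\[
\|\eta(1)\|_{L^\infty_xL^2_y}\lesssim\|\nu_1\|_{H^m_xL^2_y}\le\epsilon_0 .
\]
For the integral, Lemma~\ref{lem:remainder2} gives $\|R(\tau)\|\lesssim\epsilon_1^{1+2/d}\tau^{-1+\theta-\delta(3s+2\theta)/m}$.

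\emph{Step 2: integrability of the remainder.} This is where the work lies: the exponent must satisfy $\theta-\delta(3s+2\theta)/m>0$. Take $\theta=(2m-d)/8$ and $s=m-2\theta$, so $s>d/2$ holds. Since $s\le m$ and $\theta<1$, we have $\delta(3s+2\theta)/m\le 3\delta+2\delta\theta/m<5\delta$. Using $\delta<(2m-d)/16=\theta/2$, this is still not quite enough, since $5\delta$ may exceed $\theta$. I would therefore either choose $\theta$ as large as allowed, close to $(2m-d)/4>4\delta$, or use the freedom in $\delta$ (which need only be sufficiently small) to secure a positive exponent $\theta-\delta(3s+2\theta)/m>0$. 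Then $\int_t^1\|R\|\,\dd\tau\lesssim\epsilon_1^{1+2/d}$ uniformly in $t\in[T^{-1},1]$.

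\emph{Step 3: returning to $\nu$.} Write $\nu=\eta+(\ee^{\ii t\Delta/2}-1)\eta$. By Sobolev embedding and the estimate $\|(\ee^{\ii t\Delta/2}-1)u\|_{L^2}\lesssim t^\theta\||\nabla|^{2\theta}u\|_{L^2}$,
\[
\|(\ee^{\ii t\Delta/2}-1)\eta\|_{L^\infty_xL^2_y}\lesssim t^{\theta}\|\nu(t)\|_{H^{s+2\theta}_xL^2_y}\lesssim \epsilon_1 t^{\theta-\delta}.
\]
This does not close directly: it is linear in $\epsilon_1$ rather than $\epsilon_0+\epsilon_1^{1+2/d}$. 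So I would use Lemma~\ref{lem:energy22} in place of the bootstrap bound, giving $t^\theta\|\nu\|_{H^m}\lesssim t^{\theta-\delta}(\epsilon_0+\epsilon_1^{1+2/d})$, which is bounded since $\theta>\delta$. Combining Steps 1--3 yields \eqref{claim:v}. The main obstacle is the bookkeeping of exponents in Step 2, ensuring the remainder decays integrably at $t\to0$ under the stated constraint on $\delta$.
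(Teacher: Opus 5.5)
Your proposal is correct and follows essentially the paper's argument (mass conservation, a remainder-controlled bound on the profile's $L^\infty_xL^2_y$ norm, then $\nu-\eta=(1-\ee^{-\ii t\Delta_x/2})\nu$ estimated via Lemma~\ref{lem:energy22}). The one cosmetic difference is that you remove the long-range term through the pointwise identity $\partial_t\|\eta(t,x,\cdot)\|_{L^2_y}^2=2\Im\langle R,\eta\rangle_{L^2_y}$, whereas the paper conjugates by the phase $\ee^{-\ii\Psi}$ and reuses the resulting $\widetilde{\eta}$ for modified scattering in Section~\ref{sec:modsc}. Your Step~2 bookkeeping can also be tightened: with $s=m-2\theta$ the loss is exactly $\delta(3s+2\theta)/m=3\delta-4\delta\theta/m<3\delta$, so any $\theta\in(3\delta,(2m-d)/4)$ works, and this interval is nonempty under $\delta<(2m-d)/16$ without shrinking $\delta$.
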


\begin{proof}
	The $L^2_{x,y}$ bound follows from conservation of mass. We focus on the $L^{\infty}_x L^2_y$ bound. Recall \eqref{eqn:f} and \eqref{eqn:R}.
	We introduce the modified profile $\widetilde{\eta}(t)= \ee^{-\ii \Psi(t)}\eta(t)$ where
	\begin{equation}\label{Def:Psi}
		\Psi(t,x) = \int_{t}^{1} \tau^{-1} V_{\eta} (\tau,x) \, \dd \tau \in \mathbb{R},
	\end{equation}
	where $V_{\eta} = V(\rho_{\eta\eta^{\ast}})$. Since $|\ee^{-\ii \Psi (t)}| =1$, this is a phase correction term.
	Then \Cref{eqn:f} becomes
	\[
	\ii \partial_t \widetilde{\eta}(t)
	= \ee^{-\ii \Psi(t)}(\ii \partial_t \eta(t) + \partial_t \Psi (t) \eta(t) )
	= \ee^{-\ii \Psi(t)} R(t)
	\]
	so that
	\begin{equation}\label{eqn:tilde-f}
		\widetilde{\eta}(t) = \widetilde{\eta}(1) - \ii \int_{1}^{t} \ee^{-\ii \Psi(\tau)} R(\tau) \, \dd \tau.
	\end{equation}
	Choose $\theta$ satisfying $\delta < \theta < (2m-d)/4$, taking $\delta$ smaller if necessary. Choose $s$ with $d/2 < s \le m-2\theta$. Noting that $\nu(t) = \eta(t) + (1- \ee^{-\ii t\Delta/2}) \nu(t)$, and $|\eta(t)| = |\widetilde{\eta}(t)|$,  we may bound
	\begin{align*}
		\| \nu(t) \|_{L^{\infty}_x L^2_y}
		&\lesssim \| \eta(t) \|_{L^{\infty}_x L^2_y}  + \| (1- \ee^{-\ii t \Delta/2}) \nu(t)\|_{H^s_x L^2_y} \\
		&\lesssim \| \widetilde{\eta}(t) \|_{L^{\infty}_x L^2_y}  + t^\theta \| \nu(t) \|_{H^{s+2\theta}_x L^2_y},
	\end{align*}
	for $T^{-1} \le t \le 1$, upon using Sobolev embedding.
	Using \Cref{eqn:tilde-f} and \Cref{lem:remainder2}, the first term on the right hand side can be estimated by
	\begin{align*}
		\|\widetilde{\eta}(t)\|_{L^{\infty}_x L^2_y}
		&\lesssim \| \widetilde{\eta}(1)\|_{L^{\infty}_x L^2_y}
		+ \int_t^1 \| R(\tau) \|_{L^{\infty}_x L^2_y} \, \dd \tau \\
		&\lesssim \epsilon_0 + \epsilon_1^{1+2/d}\int_t^1 \tau^{-1+\theta - \delta(3s+2\theta)/m } \, \dd \tau \\
		&\lesssim \epsilon_0 + \epsilon_1^{1+2/d}.
	\end{align*}
	Using \Cref{lem:energy22}, the other term can be estimated by
	\[
	t^{\theta} \| \nu(t)\|_{H^{s+2\theta}_x L^2_y} \lesssim t^{\theta-\delta} (\epsilon_0 + \epsilon_1^{1+2/d}) \lesssim \epsilon_0 + \epsilon_1^{1+2/d},
	\]
	noting that $\theta > \delta$. Combining these two bounds, we obtain \Cref{claim:v}.
\end{proof}

\subsection{Proof of Proposition \ref{prop:bootstrap}}
Now we are ready to complete the proof of \Cref{prop:bootstrap}.
\begin{proof}[Proof of \Cref{prop:bootstrap}]
	Combining \Cref{lem:energy22} and \Cref{lem:decay}, we get
	\[
	\sup_{t \in [T^{-1},1]} \left\{ \|\nu(t)\|_{L^2_{x,y}} + \| \nu(t) \|_{L^{\infty}_x L^2_y} + t^{\delta} \| \nu(t) \|_{H^m_x L^2_y} \right\} \le C_0 \epsilon_0  + C_1 \epsilon_1^{1+2/d}
	\]
	for some $C_0>0$ and $C_1 >0$, independent of $T$. This yields \Cref{prop:bootstrap}, recalling the definition \eqref{def:bootstrapnorm} of $X_T$ norm.
\end{proof}

\begin{remark}
	Recalling that $\nu= \mathcal{I}\kappa$ is the pseudo-conformal transform of $\kappa$, this in particular proves that \eqref{eqn:KS-kappa} has a unique global-in-time solution $\kappa(t)$ on the time interval $[1,\infty)$ satisfying
	\[
	\| \kappa(t) \|_{L^p_x L^2_y} \lesssim \epsilon_0 t^{-d(1/2-1/p)}, \qquad \| \langle \mathrm{J}_x \rangle^{m} \kappa(t) \|_{L^2_{x,y}} \lesssim \epsilon_0 t^{\delta}
	\]
	for $2 \le p \le \infty$ and $t \ge 1$.
\end{remark}

\section{Modified Scattering}\label{sec:modsc}
In this section, we prove modified scattering, completing the proof of \Cref{thm:main2} and \Cref{thm:main1}.

\subsection{Proof of Theorem \ref{thm:main2}}

Choose $\epsilon_1 = 4C_0 \epsilon_0$.  Recall the bootstrap assumption \eqref{assm:apriori} given by
\[
\| \nu \|_{X_{T}} \le \epsilon_1.
\]
Taking $\epsilon_0 >0$ sufficiently small, we may assume that
\[
C_1  \epsilon_1^{1+2/d} \le \frac{\epsilon_1}{4}.
\]
Then \Cref{prop:bootstrap} yields
\[
\| \nu \|_{X_T} \le C_0 \epsilon_0 + C_1 \epsilon_1^{1+2/d} \le \frac{\epsilon_1}{2}.
\]
By the local well-posedness (\Cref{lem:LWP}) and a standard continuation argument, this improved estimate closes the bootstrap argument. Consequently, the solution $\nu$ is constructed on $(0,1]$ and $\| \nu \|_{X_T} \le \epsilon_1$ holds for all $T\ge 1$.

The corresponding solution $\kappa$ on $[1,\infty)$ is obtained via the inverse of pseudo-conformal transformation. Recalling \eqref{eqn:pseudo-conformal-prop} and \eqref{def:bootstrapnorm}, this also proves
\[
t^{-\delta}\| \langle \mathrm{J}_x\rangle^m \kappa(t) \|_{L^2_{x,y}} + t^{d(1/2-1/p)}\| \kappa(t) \|_{L^p_x L^2_y} \lesssim \epsilon_0
\]
for $2\le p \le \infty$ and $t \ge 1$, which in particular establishes the time decay estimate \Cref{est:timedecay}.

Next, we establish modified scattering for $\kappa$. We first prove that $\widetilde{\eta}(t)$ has a limit as $t \rightarrow 0^{+}$ in $(L^2_x \cap L^{\infty}_x)L^2_y$.
Taking $\delta>0$ smaller if necessary, we choose
\[
4\delta < \theta < (2m-d)/4, \qquad
d/2 < s \le m-2\theta.
\]
Then $(3s+2\theta)/m \le 3$, so \Cref{lem:remainder2} gives
$$
\| R(t) \|_{L^p_x L^2_y} \lesssim \epsilon_1^{1+2/d} t^{-1 +\theta - \delta(3s + 2\theta)/m} \lesssim \epsilon_1^{1+2/d} t^{-1+\theta - 3\delta}
$$
for $p \in [2,\infty]$ and $0< t \le 1$. Since $\theta - 3 \delta >0$, this bound is integrable at $t=0$. Recalling \eqref{eqn:tilde-f}, we get
\begin{equation}\label{est:tilde-f}
	\| \widetilde{\eta}(t_1) - \widetilde{\eta}(t_2) \|_{(L^2_x \cap L^{\infty}_x)L^2_y} \lesssim \int_{t_2}^{t_1} \| R(\tau) \|_{(L^2_x \cap L^{\infty}_x)L^2_y} \, \dd \tau \lesssim \epsilon_1^{1+2/d} \left(t_1^{\theta - 3\delta } - t_2^{\theta - 3\delta} \right)
\end{equation}
for $0< t_2 \le t_1 \le 1$.
Therefore, the limit of $\widetilde{\eta}(t)$ as $t \rightarrow 0^{+}$ is well-defined in $(L^2_x \cap L^{\infty}_x)L^2_y$, and we denote it by $\widetilde{\eta}(0)$.
It remains to undo the conjugations and pseudo-conformal transformation to obtain the modified scattering for $\kappa$. Recalling that
\[
\widetilde{\eta}(t)
= \ee^{-\ii \Psi(t)} \eta(t)
= \ee^{-\ii \Psi(t)} \ee^{-\ii t\Delta_x/2} \nu(t),
\]
where $\Psi(t)$ is defined in \Cref{Def:Psi}, we obtain
\begin{align*}
	\omega(t) &= \overline{\nu(1/t)}
	= \ee^{-\ii \Delta_x/(2t)} \ee^{-\ii \Psi(1/t)}  \overline{\widetilde{\eta}(1/t)}
	= \mathcal{F} \mathrm{M}(t) \mathcal{F}^{-1} \ee^{-\ii \Psi(1/t)} \overline{\widetilde{\eta}(1/t)}.
\end{align*}
Then we may write
\begin{align*}
	\kappa(t)
	&= \mathrm{M}(t) \mathrm{D}(t) \omega(t)  \\
	&= \mathrm{M}(t) \mathrm{D}(t) \mathcal{F} \mathrm{M}(t) \mathcal{F}^{-1} \ee^{-\ii \Psi(1/t)} \overline{\widetilde{\eta}(1/t)} \\
	&= \ee^{\ii t\Delta_x/2} \mathcal{F}^{-1} \ee^{-\ii \Psi(1/t)} \overline{\widetilde{\eta}(1/t)} \\
	&= \ee^{\ii t\Delta_x/2} \ee^{-\ii \Psi(1/t, -\ii \nabla_x)} \mathcal{F}^{-1}\overline{\widetilde{\eta}(1/t)}.
\end{align*}
Since $\ee^{\ii t \Delta/2}$ is unitary on $L^2_x$, we obtain from \Cref{est:tilde-f} that
\begin{equation}\label{est:modsca-u-1}
	\| \kappa(t) - \ee^{\ii t\Delta_x/2} \ee^{-\ii \Psi(1/t, -\ii \nabla_x)} \mathcal{F}^{-1}\overline{\widetilde{\eta}(0)}\|_{L^2_{x,y}}
	\lesssim \| \widetilde{\eta}(1/t) - \widetilde{\eta}(0) \|_{L^2_{x,y}}
	\lesssim \epsilon_1^{1+2/d} t^{-\theta+3\delta}
\end{equation}
for $t \ge 1$.

Next, since the phase correction does not change the density, we have
\[
\rho_{\widetilde{\eta}\widetilde{\eta}^{\ast}}(t,x) = \int |\widetilde{\eta}(t,x,y)|^2 \, \dd y = \int |\eta (t,x,y)|^2 \, \dd y = \rho_{\eta \eta^{\ast}}(t,x).
\]
Therefore, $V_{\eta}(t) = V_{\widetilde{\eta}}(t)$ for $0 < t \le 1$, where $V_{\nu} = V(\rho_{\nu \nu^{\ast}})$. Moreover, recalling that $\widetilde{\eta}(0)$ is well-defined, we set
\[
V_{\widetilde{\eta}}(0) := V(\rho_{\widetilde{\eta}(0)\widetilde{\eta}(0)^{\ast}} ).
\]
For $t \ge 1$, we decompose
\begin{equation}\label{eqn:Psi-decomp}
	\begin{aligned}
		\Psi(t^{-1}, k)
		&= \int_{1/t}^1 \tau^{-1} V_{\widetilde{\eta}} (\tau,k) \, \dd \tau
		= \int_1^t s^{-1} V_{\widetilde{\eta}} (s^{-1},k) \, \dd s \\
		&= V_{\widetilde{\eta}}(0,k) \log t+ \int_1^t s^{-1}\left( V_{\widetilde{\eta}}(s^{-1}, k) - V_{\widetilde{\eta}}(0,k) \right) \dd s \\
		&=: g_{\infty}(k) \log t + \Psi^{r}(t^{-1}, k),
	\end{aligned}
\end{equation}
where we set
\begin{align*}
	g_{\infty}(k) := V_{\widetilde{\eta}}(0,k), \qquad
	\Psi^r (t^{-1}, k) := \int_1^{t} s^{-1} \left( V_{\widetilde{\eta}}(s^{-1}, k) - V_{\widetilde{\eta}}(0,k) \right) \dd s.
\end{align*}
We also introduce
\[
\Psi^r (0,k) := \int_1^{\infty} s^{-1} \left( V_{\widetilde{\eta}}(s^{-1}, k) - V_{\widetilde{\eta}}(0,k) \right) \dd s,
\]
provided that the improper integral converges.

To prove the convergence, we bound $V_{\widetilde{\eta}}(t) - V_{\widetilde{\eta}}(0)$ for $0 < t \le 1$. We first use \Cref{est:tilde-f} to obtain
\begin{equation}\label{est:AC-1}
\begin{aligned}
    \| \widetilde{\eta}(t) - \widetilde{\eta}(0) \|_{(L^2_x \cap L^{\infty}_x) L^2_y}
    \lesssim
    \epsilon_1^{1+2/d} t^{\theta-3\delta}
\end{aligned}
\end{equation}
for $0 < t \le 1$. Moreover, Sobolev embedding and \eqref{est:tilde-f} yield
\begin{equation}
\begin{aligned}
    \| \widetilde{\eta}(t) \|_{(L^2_x \cap L^{\infty}_x) L^2_y}
    &\lesssim
    \| \widetilde{\eta}(1) \|_{(L^2_x \cap L^{\infty}_x) L^2_y}
    +
    \| \widetilde{\eta}(t) - \widetilde{\eta}(1) \|_{(L^2_x \cap L^{\infty}_x) L^2_y}\\
    &\lesssim
    \| \nu_1 \|_{H^m_x L^2_y} 
    +
    \| \widetilde{\eta}(t) - \widetilde{\eta}(1) \|_{(L^2_x \cap L^{\infty}_x) L^2_y} \\
    &\lesssim
    \epsilon_0 + \epsilon_1^{1+2/d}\\
    &\lesssim
    \epsilon_1
\end{aligned}
\end{equation}
for $ 0 < t \le 1$. Recalling that
\[
    \rho_{\widetilde{\eta}\widetilde{\eta}^{\ast}}(t,x) = \| \widetilde{\eta}(t,x,\cdot) \|_{L^2_y}^2,
\]
we have
\begin{equation}\label{est:AC-2}
    \| \rho_{\widetilde{\eta}\widetilde{\eta}^{\ast}}(t) - \rho_{\widetilde{\eta}\widetilde{\eta}^{\ast}}(0) \|_{L^p_x}
    \lesssim 
    \left( \| \widetilde{\eta}(t) \|_{L^{2p}_x L^2_y} + \| \widetilde{\eta}(0) \|_{L^{2p}_x L^2_y}  \right)
    \| \widetilde{\eta}(t) - \widetilde{\eta}(0) \|_{L^{2p}_x L^2_y}
    \lesssim
    \epsilon_1^{2+2/d} t^{\theta - 3\delta}
\end{equation}
uniformly for $ 1 \le p \le \infty$ and $ 0 < t \le 1$.

Recall the decomposition $V= V_{\mathrm{H}} + V_{\mathrm{xc}}$. We apply \Cref{est:AC-2} for $p=1$ and $p = \infty$ to get
\begin{align*}
	\| V_{\mathrm{H},\widetilde{\eta}}(t) - V_{\mathrm{H},\widetilde{\eta}}(0) \|_{L^{\infty}}
	&\lesssim \left\| |\cdot|^{-1} \ast (\rho_{\widetilde{\eta}\widetilde{\eta}^{\ast}}(t) -\rho_{\widetilde{\eta}\widetilde{\eta}^{\ast}}(0))  \right\|_{L^{\infty}}\\
	&\lesssim \left\| \rho_{\widetilde{\eta}\widetilde{\eta}^{\ast}}(t) -\rho_{\widetilde{\eta}\widetilde{\eta}^{\ast}}(0) \right\|_{L^1}^{(d-1)/d} \left\| \rho_{\widetilde{\eta}\widetilde{\eta}^{\ast}}(t) -\rho_{\widetilde{\eta}\widetilde{\eta}^{\ast}}(0)\right\|_{L^{\infty}}^{1/d} \\
	&\lesssim \epsilon_1^{2+2/d} t^{\theta -3 \delta}.
\end{align*}
In addition, using $|a^{2/d} - b^{2/d} | \le |a-b|^{2/d}$ for $a,b\ge0$ and \eqref{est:tilde-f}, we obtain that
\begin{equation*}
\begin{aligned}
\| V_{\mathrm{xc},\widetilde{\eta}}(t) - V_{\mathrm{xc},\widetilde{\eta}}(0) \|_{L^{\infty}} 
&\lesssim 
\left\| \rho_{\widetilde{\eta}\widetilde{\eta}^{\ast}}^{1/d}(t) - \rho_{\widetilde{\eta}\widetilde{\eta}^{\ast}}^{1/d}(0) 
\right\|_{L^{\infty}} 
\lesssim
\| \widetilde{\eta}(t) - \widetilde{\eta}(0) \|_{L^{\infty}_x L^2_y}^{2/d}
\lesssim 
\epsilon_1^{2(1+2/d)/d}t^{2(\theta-3\delta)/d}.
\end{aligned}
\end{equation*}
Combining these two bounds, we obtain for $t \ge 1$ that
\[
\| V_{\widetilde{\eta}}(1/t) -  V_{\widetilde{\eta}}(0)\|_{L^{\infty}}
\lesssim \| V_{\mathrm{H},\widetilde{\eta}}(1/t) -  V_{\mathrm{H},\widetilde{\eta}}(0)\|_{L^{\infty}}
+ \| V_{\mathrm{xc},\widetilde{\eta}}(1/t) -  V_{\mathrm{xc},\widetilde{\eta}}(0)\|_{L^{\infty}}
\lesssim \epsilon_1^{2(1+2/d)/d}t^{-\delta'}
\]
where $\delta' := \min \{ \theta - 4\delta, \, 2(\theta - 3\delta)/d \} >0$, noting that $\epsilon_1 \le 1$ and $\theta > 4\delta$.
In particular, 
\[
    \int_1^{\infty} s^{-1} \left\| V_{\widetilde{\eta}}(s^{-1}) - V_{\widetilde{\eta}}(0) \right\|_{L^{\infty}_k} \dd s
    \lesssim
    \epsilon_1^{2(1+2/d)/d} \int_1^{\infty} s^{-1-\delta'} \, \dd s
    <\infty,
\]
which proves that $\Psi^r (0,k)$ is well-defined.
Now using $|\ee^{-\ii a} - \ee^{-\ii b}| \le |a-b|$ for $a, b \in \mathbb{R}$, we bound
\begin{equation}\label{est:modsca-u-2}
	\begin{aligned}
		\| \ee^{-\ii \Psi^r (1/t, -\ii \nabla_x)} - \ee^{-\ii \Psi^r (0, -\ii \nabla_x)} \|_{\mathrm{op}}
		&\lesssim \| \ee^{-\ii \Psi^{r}(1/t)} - \ee^{-\ii \Psi^r (0)} \|_{L^{\infty}_k} \\
		&\lesssim  \|\Psi^r (1/t) - \Psi^r (0) \|_{L^{\infty}_k}\\
		&\lesssim \int_t^{\infty} s^{-1} \| V_{\widetilde{\eta}}(s^{-1}) - V_{\widetilde{\eta}} (0) \|_{L^{\infty}_k} \, \dd s\\
		& \lesssim \epsilon_1^{2(1+2/d)/d} t^{-\delta'}.
	\end{aligned}
\end{equation}
Using \Cref{est:modsca-u-1}, \Cref{eqn:Psi-decomp}, \Cref{est:modsca-u-2}, and the decomposition
\begin{align*}
	\ee^{-\ii \Psi(1/t, -\ii \nabla_x)} - \ee^{-\ii g_{\infty}(-\ii \nabla_x)\log t} \ee^{-\ii \Psi^r (0, -\ii \nabla_x)}
	= \ee^{-\ii g_{\infty}(-\ii \nabla_x) \log t} (\ee^{-\ii \Psi^r (1/t, -\ii \nabla_x)} - \ee^{-\ii \Psi^r (0, -\ii \nabla_x)}),
\end{align*}
it follows that
\begin{align*}
	&\| \kappa(t) - \ee^{-\ii(-t\Delta_x/2 + g_{\infty} (-\ii \nabla ) \log t)} \kappa_{\infty}  \|_{L^2_{x,y}}\\
	&\lesssim \| \kappa(t) - \ee^{\ii t\Delta_x/2} \ee^{-\ii \Psi(1/t, -\ii \nabla_x)} \mathcal{F}^{-1}\overline{\widetilde{\eta}(0)}\|_{L^2_{x,y}} \\
	& \qquad + \| \ee^{\ii t\Delta_x/2}(\ee^{-\ii \Psi(1/t, -\ii \nabla_x)} - \ee^{-\ii g_{\infty}(-\ii \nabla_x)\log t} \ee^{-\ii \Psi^r (0, -\ii \nabla_x)})\mathcal{F}^{-1} \overline{\widetilde{\eta}(0)} \|_{L^2_{x,y}}  \\
	&\lesssim \epsilon_1^{1+2/d} t^{-\theta +3\delta} + \epsilon_1^{2(1+2/d)/d} t^{-\delta'} \\
	&\lesssim \epsilon_1^{2(1+2/d)/d} t^{-\delta'}
\end{align*}
for $t \ge 1$, where $\kappa_{\infty}:= \ee^{-\ii \Psi^r(0,-\ii \nabla_x)} \mathcal{F}^{-1} \overline{\widetilde{\eta}(0)} \in L^2_{x,y}$.

\subsection{Proof of Theorem \ref{thm:main1}}\label{sec:proof-main1}
In this section, we complete the proof of \Cref{thm:main1}.
\begin{proof}[Proof of \Cref{thm:main1}]
	Let $\gamma_1 \in \mathfrak{S}^1$ be a density matrix satisfying
	\[
	\| \langle \mathrm{J}(1) \rangle^m \gamma_1 \langle \mathrm{J}(1) \rangle^m \|_{\mathfrak{S}^1} + \| \rho_{\gamma_1}\|_{L^{\infty}_x} \le \epsilon_0.
	\]
	Define $\kappa_1:= \gamma_1^{1/2}$. Then
	\[
	\| \langle \mathrm{J}_x(1) \rangle^m \kappa_1 \|_{L^2_{x,y}} + \|  \kappa_1 \|_{L^\infty_x L^2_y} \lesssim \epsilon_0^{1/2}.
	\]
	Hence, for sufficiently small $\epsilon_0 >0$, \Cref{thm:main2} gives a global-in-time solution $\kappa$ of \Cref{eqn:KS-kappa-kernel} satisfying
	\[
	\langle \mathrm{J}_x \rangle^m \kappa \in C([1,\infty);\mathfrak{S}^2), \qquad \| \kappa(t) \|_{L^p_x L^2_y} \lesssim \epsilon_0^{1/2} t^{-d(1/2-1/p)}
	\]
	for $ 2 \le p \le \infty$ and $t \ge 1$.
	Now define $\gamma(t) = \kappa(t) \kappa(t)^{\ast}$ for $t \ge 1$. Then $\gamma(t)$ is nonnegative trace class.  Moreover, for each $t \ge 1$, we have
	\[
	\| \langle\mathrm{J}(t) \rangle^m \gamma(t)\langle\mathrm{J}(t) \rangle^m\|_{\mathfrak{S}^1} = \| \langle\mathrm{J}(t) \rangle^m \kappa (t)\|_{\mathfrak{S}^2}^2, \qquad
	\| \rho_{\gamma}(t) \|_{L^p} = \| \kappa (t) \|_{L^{2p}_xL^2_y}^2.
	\]
	Therefore, we get
	\[
	\langle \mathrm{J} \rangle^m \gamma \langle \mathrm{J} \rangle^m \in C([1,\infty), \mathfrak{S}^1), \qquad \| \rho_{\gamma}(t) \|_{L^p} \lesssim \epsilon_0 t^{-d(1-1/p)}
	\]
	for $1 \le p \le \infty$ and $t \ge 1$.
	
	Let $H_{\gamma} = -\Delta/2 + V(\rho_{\gamma})$ be the Hamiltonian which is self-adjoint. Since $ \rho_{\gamma} = \rho_{\kappa \kappa^{\ast}}$, we compute
	\begin{align*}
		\ii \partial_t \gamma
		&= (\ii \partial_t \kappa) \kappa^{\ast} + \kappa (\ii \partial_t \kappa^{\ast}) \\
		&= (\ii \partial_t \kappa) \kappa^{\ast} + \kappa (-\ii \partial_t \kappa)^{\ast} \\
		&= H_{\gamma} \kappa \kappa^{\ast} - \kappa  \kappa^{\ast} H_{\gamma} \\
		&= [H_{\gamma}, \gamma].
	\end{align*}
	Therefore, $\gamma(t)$ solves \Cref{eqn:KS-gamma}.
	
	It remains to prove modified scattering for $\gamma$. Suppose there exists $\delta'>0$, $\kappa_{\infty} \in \mathfrak{S}^2$ and a real-valued function $g_{\infty}$
	such that
	\[
	\|  \kappa(t) - \ee^{-\ii(-t\Delta/2 + g_{\infty}(-\ii \nabla) \log t)} \kappa_{\infty} \|_{\mathfrak{S}^2} \lesssim \epsilon_0^{1/2} t^{-\delta'}
	\]
	holds for all $t \ge 1$. Using $AA^{\ast} - BB^{\ast} = (A-B)A^{\ast} + B(A-B)^{\ast}$ and $\gamma(t) = \kappa(t) \kappa(t)^{\ast}$, we obtain that
	\[
	\| \gamma(t)- \ee^{-\ii(-t\Delta/2 + g_{\infty} (-\ii \nabla ) \log t)} \gamma_{\infty} \ee^{\ii (-t\Delta/2 + g_{\infty} (-\ii \nabla ) \log t)} \|_{\mathfrak{S}^1}
	\lesssim \epsilon_0 t^{-\delta'}
	\]
	for all $t \ge 1$, where $\gamma_{\infty}:= \kappa_{\infty} \kappa_{\infty}^{\ast} \in \mathfrak{S}^1$, upon taking smaller $\delta' >0$ if necessary.
	
	Finally, if there is another $\widetilde{\gamma}_{\infty}$ satisfying
	\[
	\| \gamma(t)- \ee^{-\ii(-t\Delta/2 + g_{\infty} (-\ii \nabla ) \log t)} \widetilde{\gamma}_{\infty} \ee^{\ii (-t\Delta/2 + g_{\infty} (-\ii \nabla ) \log t)} \|_{\mathfrak{S}^1} \rightarrow 0
	\]
	as $t \rightarrow \infty$, then $\gamma_{\infty} = \widetilde{\gamma}_{\infty}$ since
	\[
	\| \gamma_{\infty} - \widetilde{\gamma}_{\infty} \|_{\mathfrak{S}^1}
	= \|\ee^{-\ii(-t\Delta/2 + g_{\infty} (-\ii \nabla ) \log t)} (\gamma_{\infty}-\widetilde{\gamma}_{\infty}) \ee^{\ii (-t\Delta/2 + g_{\infty} (-\ii \nabla ) \log t)} \|_{\mathfrak{S}^1} \rightarrow 0
	\]
	as $t \rightarrow \infty$. This proves the uniqueness of modified scattering state, which concludes the proof of \Cref{thm:main1}.
\end{proof}

\appendix
\section{Local Well-Posedness}\label{sec:LWP}
In this appendix, we prove \Cref{lem:LWP}.
\begin{proof}[Proof of \Cref{lem:LWP}]
	Set $R= 2 \| \nu_1 \|_{H^m_x L^2_y}$. We shall choose $T= T(R) > 1$, sufficiently close to $1$, to be determined later. Define the map
	\[
	\Phi(\nu)(t,x,y)
	:=  \ee^{\ii (t-1)\Delta_x/2} \nu_1(x,y)
	- \ii \int_1^t  \ee^{\ii (t-\tau)\Delta_x/2} \tau^{-1} V_{\nu} (\tau,x)  \nu(\tau,x,y) \, \dd \tau
	\]
	for $t \in [T^{-1},1]$, recalling that $V_\nu = V(\rho_{\nu\nu^{\ast}})$. Consider the following complete metric space $(Z_{T,R}, d)$ where
	\[
	Z_{T,R} := \left\{ \nu \in C([T^{-1},1], L^2_{x,y}) \cap L^{\infty}([T^{-1},1], H^m_x L^2_y):  \sup_{t \in[T^{-1}, 1] }\| \nu(t) \|_{ H^m_x L^2_y}  \le R \right\}
	\]
	and the metric is given by
	\[
	d(\nu^{(1)},\nu^{(2)}) := \sup_{t \in [T^{-1}, 1]} \| \nu^{(1)}(t) - \nu^{(2)}(t) \|_{L^2_{x,y}}.
	\]
	We first show that $\Phi$ maps $Z_{T,R}$ to itself. Let $\nu\in Z_{T,R}$. Following the proof of \Cref{coro:NL}, we get
	\begin{align*}
		\| \Phi(\nu)(t) \|_{H^m_x L^2_y}
		&\leq \| \nu_1 \|_{H^m_x L^2_y} + \int_t^1 \tau^{-1}\|  V_\nu(\tau) \nu (\tau) \|_{H^m_x L^2_y} \, \dd \tau \\
		&\leq \frac{R}{2} + C T\int_t^1  \left(\| \nu (\tau) \|_{H^m_x L^2_y}^3 + \| \nu(\tau)\|_{H^m_x L^2_y}^{1+2/d} \right) \, \dd \tau \\
		&\leq \frac{R}{2} + C(T-1)(R^3 + R^{1+2/d})
	\end{align*}
	for $t \in [T^{-1}, 1]$.
	Choosing $T=T(R)>1$ sufficiently close to $1$, we have $\Phi(\nu) \in Z_{T,R}$, which proves
	\[
	\Phi(Z_{T,R}) \subset Z_{T,R}.
	\]
	We now prove that $\Phi$ is a contraction on $Z_{T,R}$. For two distinct $\nu^{(1)}$ and $\nu^{(2)}$ in $Z_{T,R}$, we write
	\begin{equation*}
		\Phi(\nu^{(1)})(t) - \Phi(\nu^{(2)})(t)
		= -\ii \int_1^t \ee^{\ii (t-\tau) \Delta/2} \tau^{-1} (V_{\nu^{(1)}} \nu^{(1)} - V_{\nu^{(2)}} \nu^{(2)} )(\tau)\, \dd \tau.
	\end{equation*}
	Taking the $L^2_{x,y}$ norm, we obtain for $t \in [T^{-1}, 1]$ that
	\begin{equation}\label{eqn:LWP-diff}
		\| \Phi(\nu^{(1)})(t) - \Phi(\nu^{(2)})(t) \|_{L^2_{x,y}}
		\le T\int_t^1 \| V_{\nu^{(1)}} \nu^{(1)}(\tau)- V_{\nu^{(2)}} \nu^{(2)}(\tau) \|_{L^2_{x,y}} \, \dd \tau.
	\end{equation}
	We recall $V=V_{\mathrm{H}} + V_{\mathrm{xc}}$ and estimate the corresponding nonlinear terms separately. For the Hartree term, we write
	\begin{align*}
		V_{\mathrm{H},\nu^{(1)}} \nu^{(1)} - V_{\mathrm{H},\nu^{(2)}} \nu^{(2)}
		&= V_{\mathrm{H},\nu^{(1)}}(\nu^{(1)} - \nu^{(2)}) + (V_{\mathrm{H},\nu^{(1)}}-V_{\mathrm{H},\nu^{(2)}}) \nu^{(2)}.
	\end{align*}
	Recall from \Cref{est:interpolation-riesz} that
	\[
	\| |\cdot|^{-1} \ast_x \rho \|_{L^{\infty}(\mathbb{R}^d)} \lesssim \| \rho \|_{L^1(\mathbb{R}^d)}^{(d-1)/d} \| \rho \|_{L^{\infty}(\mathbb{R}^d)}^{1/d}.
	\]
	Applying this for $\rho= \rho_{\nu^{(1)}}$ and using Sobolev embedding, it follows that
	\[
	\| V_{\mathrm{H},\nu^{(1)}} (\nu^{(1)} - \nu^{(2)}) \|_{L^2_{x,y}}
	\lesssim \| V_{\mathrm{H},\nu^{(1)}} \|_{L^{\infty}_x} \| \nu^{(1)} - \nu^{(2)} \|_{L^2_{x,y}}
	\lesssim \| \nu^{(1)} \|_{H^m_x L^2_y}^2 \| \nu^{(1)} -\nu^{(2)} \|_{L^2_{x,y}}.
	\]
	Next, we write
	\[
	(V_{\mathrm{H},\nu^{(1)}} - V_{\mathrm{H},\nu^{(2)}} ) \nu^{(2)}
	= (|\cdot|^{-1} \ast_x \rho_{\nu^{(1)}, \nu^{(2)}}) \nu^{(2)},
	\]
	where
	\[
	\rho_{\nu^{(1)}, \nu^{(2)}}(x) = \langle \nu^{(1)} - \nu^{(2)} , \nu^{(1)}\rangle_{L^2} (x)  + \langle \nu^{(2)}, \nu^{(1)} - \nu^{(2)} \rangle_{L^2}(x).
	\]
	Using H\"older's inequality, we get
	\begin{align*}
		\| \rho_{\nu^{(1)}, \nu^{(2)}} \|_{L^1} &\lesssim \left(\| \nu^{(1)} \|_{L^2_{x,y}} + \| \nu^{(2)} \|_{L^2_{x,y}} \right) \| \nu^{(1)} - \nu^{(2)} \|_{L^2_{x,y}},\\
		\| \rho_{\nu^{(1)}, \nu^{(2)}} \|_{L^2} &\lesssim \left(\| \nu^{(1)} \|_{L^{\infty}_x L^2_y} + \| \nu^{(2)} \|_{L^{\infty}_x L^2_y} \right) \| \nu^{(1)} - \nu^{(2)} \|_{L^2_{x,y}}.
	\end{align*}
	Using Sobolev embedding and the definition of $Z_{T,R}$, we may bound
	\[
	\| \rho_{\nu^{(1)}, \nu^{(2)}} \|_{L^1 \cap L^2} \lesssim  R \| \nu^{(1)} - \nu^{(2)} \|_{L^2_{x,y}}.
	\]
	Choose $1 < p < d/(d-1)$, $q \ge 2$, and $r \ge 2$ satisfying
	\[
	\frac{1}{q} = \frac{1}{p} - \frac{d-1}{d}, \qquad \frac{1}{2} = \frac{1}{q} + \frac{1}{r}.
	\]
	By Hardy--Littlewood--Sobolev and H\"older inequalities, we get
	\begin{align*}
		\| (|\cdot|^{-1} \ast_x \rho_{\nu^{(1)}, \nu^{(2)}} ) \nu^{(2)} \|_{L^2_{x,y}}
		&\lesssim \| |\cdot|^{-1} \ast_x \rho_{\nu^{(1)}, \nu^{(2)}} \|_{L^q_x} \| \nu^{(2)} \|_{L^r_x L^2_y} \\
		&\lesssim \| \rho_{\nu^{(1)}, \nu^{(2)}} \|_{L^p_x} \| \nu^{(2)} \|_{L^r_x L^2_y} \\
		&\lesssim R^2 \| \nu^{(1)} - \nu^{(2)} \|_{L^2_{x,y}} .
	\end{align*}
	Combining these estimates, we obtain that
	\begin{equation}\label{est:LWP-diff1}
		\| V_{\mathrm{H},\nu^{(1)}} \nu^{(1)} - V_{\mathrm{H},\nu^{(2)}} \nu^{(2)}\|_{L^2_{x,y}}
		\lesssim R^2 \| \nu^{(1)} - \nu^{(2)} \|_{L^2_{x,y}}.
	\end{equation}
	For $V_{\mathrm{xc},\nu^{(1)}} \nu^{(1)} - V_{\mathrm{xc},\nu^{(2)}} \nu^{(2)}$, we recall the map $F_{\mathrm{xc}}(\varphi) = \| \varphi\|_{L^2_y}^{2/d} \varphi$ satisfies $\| DF_{\mathrm{xc}} (\varphi) \|_{\mathrm{op}} \lesssim \| \varphi \|_{L^2}^{2/d}$. This yields
	\begin{equation}\label{est:LWP-diff2}
		\begin{aligned}
			\| V_{\mathrm{xc},\nu^{(1)}} \nu^{(1)} - V_{\mathrm{xc},\nu^{(2)}} \nu^{(2)} \|_{L^2_{x,y}}
			&\lesssim \left( \| \nu^{(1)} \|_{L^{\infty}_x L^2_y}^{2/d} + \| \nu^{(2)} \|_{L^{\infty}_x L^2_y}^{2/d} \right) \| \nu^{(1)} - \nu^{(2)} \|_{L^2_{x,y}}\\
			&\lesssim R^{2/d} \| \nu^{(1)} - \nu^{(2)} \|_{L^2_{x,y}}.
		\end{aligned}
	\end{equation}
	Applying \Cref{est:LWP-diff1} and \Cref{est:LWP-diff2}, we obtain from \Cref{eqn:LWP-diff} that
	\begin{align*}
		\| \Phi(\nu^{(1)})(t) - \Phi(\nu^{(2)})(t) \|_{L^2_{x,y}}
		&\lesssim T\int_t^1 \sum_{j\in\{\mathrm{H},\mathrm{xc}\}} \| V_{j,\nu^{(1)}} \nu^{(1)} (\tau) - V_{j,\nu^{(2)}} \nu^{(2)} (\tau) \|_{L^2_{x,y}} \, \dd \tau  \\
		&\lesssim (T-1) (R^2 + R^{2/d} ) \| \nu^{(1)} - \nu^{(2)} \|_{L^2_{x,y}}
	\end{align*}
	for $t \in [T^{-1} ,1]$. Choosing $T=T(R)$ closer to $1$, if necessary, we obtain that the map $\Phi$ is a contraction on $Z_{T,R}$. By the contraction mapping principle, there is a unique solution $$\nu \in C([T^{-1}, 1], L^2_{x,y}) \cap L^{\infty}([T^{-1}, 1], H^m_x L^2_y)$$ of \eqref{eqn:nu}. Since $\ee^{\ii t\Delta}$ is strongly continuous on $H^m$ and $V_{\nu} \nu(\tau) \in L^1([T^{-1}, 1], H^m_x L^2_y)$, Duhamel's formula implies that $\nu \in C([T^{-1}, 1], H^m)$. This completes the proof of \Cref{lem:LWP}.
\end{proof}

\section{Subcritical Kohn--Sham Equations}\label{sec:subcritical}

Consider the subcritical time-dependent Kohn--Sham equation
\begin{equation}\label{eqn:KS-subcrit-gamma}
	\ii \partial_t \gamma = \Big[ -\frac{1}{2} \Delta + \lambda (|x|^{-\alpha} \ast \rho_{\gamma}) + \mu \rho_{\gamma}^{\beta}, \, \gamma \Big],
\end{equation}
with $1 < \alpha < d $ and $\beta >1/d$, where $d \le 3$. In \cite{Pusateri2021}, almost all subcritical regimes were considered, namely, $\beta > 1/ \min \{ d, 2\}$. This excludes the case when $\beta \in (1/3, 1/2]$ in $d=3$. In this section, we address these cases for completeness.

\begin{theorem}\label{thm:subcritical}
	Consider \eqref{eqn:KS-subcrit-gamma} with $d=3$, $\alpha \in (1,3/2)$ and $\beta \in(1/3,1/2]$. Let $m \in (3/2, \min\{1+2\beta, \, 3-\alpha\})$ and assume that the initial density matrix $\gamma_1$ satisfies
	\[
	\| \langle \mathrm{J}(1) \rangle^m \gamma_1 \langle \mathrm{J}(1) \rangle^m \|_{\mathfrak{S}^1} + \| \rho_{\gamma_1} \|_{L^{\infty}_x} \le \epsilon_0.
	\]
	For sufficiently small $\epsilon_0>0$, there exists a unique global-in-time solution $\gamma \in C ([1,\infty), \mathfrak{S}^1)$
	to \Cref{eqn:KS-subcrit-gamma} satisfying $|\mathrm{J}|^m \gamma |\mathrm{J}|^m \in C ([1,\infty), \mathfrak{S}^1)$ and
	\[
	\| \rho_{\gamma}(t) \|_{L^p_x} \lesssim \epsilon_0  t^{-3(1-1/p)}
	\]
	for $p \in [1,\infty]$ and $t \ge 1$.
	Moreover, there exists a unique $\gamma_{\infty} \in \mathfrak{S}^1$ such that, for some $\delta' >0$,
	\[
	\| \gamma(t)- \ee^{\ii t\Delta/2}\gamma_{\infty} \ee^{-\ii t\Delta/2} \|_{\mathfrak{S}^1}
	\lesssim \epsilon_0  t ^{-\delta'}
	\]
	holds for all $t \ge 1$.
\end{theorem}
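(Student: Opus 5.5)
We follow the same scheme as for \Cref{thm:main1}, with the critical coefficient $t^{-1}$ replaced by integrable powers. We pass to the half-density $\kappa$ with $\kappa_1=\gamma_1^{1/2}$ and apply the pseudo-conformal transformation and time inversion of \Cref{subsec:pseudo-conformal}. A direct computation gives
\[
\ii\partial_t\nu=\Big(-\tfrac12\Delta_x+\lambda t^{\alpha-2}\,(|\cdot|^{-\alpha}\ast\rho_{\nu\nu^\ast})+\mu\, t^{3\beta-2}\rho_{\nu\nu^\ast}^{\beta}\Big)\nu ,\qquad t\in[T^{-1},1].
\]
The scaling is the same as before: in $d=3$ the Hartree term picks up $t^{-1}\cdot t^{\alpha-1}$ and the power term picks up $t^{-1}\cdot t^{3\beta-1}$. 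Both exponents $\alpha-2$ and $3\beta-2$ exceed $-1$, so the coefficients are integrable at $t=0$. We use the same bootstrap norm $X_T$ as in \eqref{def:bootstrapnorm}, with $m\in(3/2,\min\{1+2\beta,3-\alpha\})$, and we aim to prove the analogue of \Cref{prop:bootstrap} with $\epsilon_1^{1+2/d}$ replaced by $\epsilon_1^{1+2\beta}+\epsilon_1^3$.

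The nonlinear estimates carry over with only minor changes. For the power term, the proof of \Cref{lem:powertype} works verbatim with $2/d$ replaced by $2\beta\in(2/3,1]$. Indeed, $F(\varphi)=\|\varphi\|_{L^2_y}^{2\beta}\varphi$ is $C^{1,2\beta}$ by the same projection argument, since it only used $0<2/d\le1$. The Besov and interpolation step then gives
\[
\|F(\kappa)\|_{\dot H^m_xL^2_y}\lesssim\|\kappa\|_{L^\infty_xL^2_y}^{2\beta}\|\kappa\|_{\dot H^m_xL^2_y}\quad\text{for }m<1+2\beta.
\]
For the Hartree term we replace \Cref{lem:potential1v}. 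The $L^\infty$ bound follows from the same splitting, $\| |\cdot|^{-\alpha}\ast\rho\|_{L^\infty}\lesssim\|\rho\|_{L^1}^{1-\alpha/3}\|\rho\|_{L^\infty}^{\alpha/3}$, which uses $\alpha<3$. For the $\dot H^s$ bounds, $s\le m$, we write $|\cdot|^{-\alpha}\ast\rho=c|\nabla|^{\alpha-3}\rho$ and bound
\[
\||\nabla|^{s+\alpha-3}\rho\|_{L^2}.
\]
Since $s+\alpha-3<0$ by the assumption $m<3-\alpha$, Hardy--Littlewood--Sobolev controls this by $\|\rho\|_{L^{p_1}}$ with $1/p_1=1/2+(3-\alpha-s)/3$. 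We need $p_1\ge1$, which holds because $\alpha<3/2$ and $s\ge1$ imply $3-\alpha-s\le 3/2$. This reproduces \Cref{coro:NL}.

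The energy estimate in \Cref{lem:energy22} now improves. Since $\int_t^1\tau^{\alpha-2}\,\dd\tau$ and $\int_t^1\tau^{3\beta-2}\,\dd\tau$ are bounded, we can even take $\delta=0$. We keep a small $\delta$ anyway so that the bound on $\|\nu\|_{H^m_xL^2_y}$ is uniform in $T$ and closes the argument.

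For the $L^\infty_xL^2_y$ bound we no longer need a phase correction. We set $\eta=\ee^{-\ii t\Delta_x/2}\nu$ and write
\[
\ii\partial_t\eta=\ee^{-\ii t\Delta_x/2}\big(t^{\alpha-2}V_{\mathrm H,\nu}+t^{3\beta-2}V_{\mathrm{xc},\nu}\big)\nu .
\]
By Sobolev embedding with some $s\in(3/2,m]$ and \Cref{coro:NL}, the right-hand side is bounded in $(L^2_x\cap L^\infty_x)L^2_y$ by $\epsilon_1^{1+2\beta}t^{\min(\alpha,3\beta)-2-\delta'}$, which is integrable once $\delta$ is small. Hence $\eta(t)$ is Cauchy as $t\to0^+$ in $(L^2_x\cap L^\infty_x)L^2_y$, with a rate $t^{\sigma}$ for some $\sigma>0$. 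This gives the uniform $L^\infty_xL^2_y$ bound on $\nu$, using $\nu=\eta+(1-\ee^{-\ii t\Delta/2})\nu$ and the estimate $t^\theta\|\nu\|_{H^{s+2\theta}}$ exactly as in \Cref{lem:decay}. The bootstrap then closes as in \Cref{sec:modsc}, and the decay $\|\rho_\gamma(t)\|_{L^p}\lesssim\epsilon_0t^{-3(1-1/p)}$ follows from \eqref{eqn:pseudo-conformal-prop}.

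To conclude, we let $\eta(0)=\lim_{t\to0^+}\eta(t)$ and undo the transformations as in the computation preceding \eqref{est:modsca-u-1}, now with $\Psi\equiv0$. This gives
\[
\|\kappa(t)-\ee^{\ii t\Delta_x/2}\kappa_\infty\|_{L^2_{x,y}}\lesssim\|\eta(1/t)-\eta(0)\|_{L^2_{x,y}}\lesssim t^{-\delta'},\qquad \kappa_\infty=\mathcal F^{-1}\overline{\eta(0)}.
\]
The statement for $\gamma=\kappa\kappa^\ast$, including uniqueness of $\gamma_\infty$, follows exactly as in \Cref{sec:proof-main1}. The local theory is the same as in \Cref{lem:LWP}.

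The main obstacle is the Hartree Sobolev bound. It must hold for all $s$ up to $m$, must tolerate the singular range $\alpha\in(1,3/2)$, and must be compatible with $m>3/2$. This is where the constraints $m<3-\alpha$ and $\alpha<3/2$ enter, and the HLS exponents have to be checked carefully.
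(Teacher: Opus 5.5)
Your proposal is correct and follows essentially the paper's route: the half-density, the pseudo-conformal inversion with integrable coefficients $t^{\alpha-2}$ and $t^{3\beta-2}$, splitting plus Hardy--Littlewood--Sobolev for the Hartree term, the $C^{1,2\beta}$ analogue of \Cref{lem:powertype}, and Duhamel convergence of $\eta=\ee^{-\ii t\Delta_x/2}\nu$ with $\Psi\equiv0$. The one difference is that the paper bootstraps only $\sup_t\|\nu(t)\|_{H^m_xL^2_y}$ and gets the $L^\infty_xL^2_y$ bound directly from $H^m_x\hookrightarrow L^\infty_x$ (since $m>3/2$), so your $t^\delta$ weight and \Cref{lem:decay}-type step are harmless but unnecessary; note also a small slip in your justification: $p_1>1$ in the HLS step follows from $\alpha>1$ and $s\ge1$ (giving $3-\alpha-s<1$), not from $\alpha<3/2$, whose actual role is to make the range $(3/2,\,3-\alpha)$ for $m$ nonempty.
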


\begin{remark}
	The restricted range of $1 < \alpha < 3/2$ comes from the application of the Hardy--Littlewood--Sobolev inequality \Cref{eqn:HLS} in the proof. The whole range of $1<\alpha<3$ might be covered by using Strichartz estimates.
\end{remark}

As before in \Cref{sec:main-result}, we use the formulation in $\kappa$ given by
\begin{equation}\label{eqn:KS-subcrit-u}
	\ii \partial_t \kappa(t,x,y)  =-\frac12\Delta_x \kappa(t,x,y)+ \lambda (|x|^{-\alpha}\ast_x \rho_{\kappa\kappa^{\ast}}(t))(x) \kappa(t,x,y) +\mu \rho_{\kappa\kappa^{\ast}}^{\beta}(t,x) \kappa(t,x,y),
\end{equation}
with $\rho_{\kappa\kappa^{\ast}}(x) = \| \kappa(x,\cdot) \|_{L^2_y}^2$, and it is sufficient to show the following theorem for this formulation.
\begin{theorem}
	Consider \Cref{eqn:KS-subcrit-u} with $d=3$, $\alpha \in (1,3/2)$ and $\beta \in(1/3,1/2]$. Let $m \in (3/2, \min\{1+2\beta, \, 3-\alpha\})$ and assume that the initial data $\kappa_1$ satisfies
	\[
	\| \kappa_1 \|_{L^{\infty}_x L^2_y} + \| \langle \mathrm{J}_x(1) \rangle^m \kappa_1 \|_{L^2_{x,y}} \le \epsilon_0.
	\]
	For sufficiently small $\epsilon_0>0$, there exists a global-in-time solution $\kappa \in C([1,\infty), \mathfrak{S}^2)$ to \Cref{eqn:KS-subcrit-u} satisfying $|\mathrm{J}_x|^m \kappa \in C([1,\infty), \mathfrak{S}^2)$ and
	\[
	\| \kappa(t) \|_{L^p_x L^2_y} \lesssim \epsilon_0 t^{-3(1/2-1/p)}
	\]
	for $p \in [2,\infty]$ and $t \ge 1$.
	Moreover, there exists a unique $\kappa_\infty \in L^2_{x,y}$ such that, for some $\delta>0$,
	\[
	\| \kappa(t) - \ee^{\ii t\Delta_x/2} \kappa_{\infty}  \|_{L^2_{x,y}}
	\lesssim \epsilon_0  t^{-\delta}
	\]
	holds for all $t \ge 1$.
\end{theorem}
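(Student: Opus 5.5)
We follow the critical-case argument, replacing the logarithmic phase correction by a plain time-integrable bound. First we apply the pseudo-conformal transformation of \Cref{subsec:pseudo-conformal}: $\kappa=\mathrm{M}(t)\mathrm{D}(t)\omega$, followed by time inversion $\nu(t)=\overline{\omega(1/t)}$. Since $\rho_{\kappa\kappa^*}(t,x)=t^{-3}\rho_{\omega\omega^*}(t,x/t)$, we get $|x|^{-\alpha}\ast\rho_{\kappa\kappa^*}(t,tx)=t^{-\alpha}(|\cdot|^{-\alpha}\ast\rho_{\omega\omega^*})(x)$ and $\rho^{\beta}_{\kappa\kappa^*}(t,tx)=t^{-3\beta}\rho_{\omega\omega^*}^\beta(x)$. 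After inversion, $\nu$ solves
\[
\ii\partial_t\nu=-\tfrac12\Delta_x\nu+\lambda t^{\alpha-2}(|\cdot|^{-\alpha}\ast\rho_{\nu\nu^*})\nu+\mu t^{3\beta-2}\rho_{\nu\nu^*}^\beta\nu,\qquad t\in[T^{-1},1].
\]
Because $\alpha>1$ and $3\beta>1$, the coefficients are $t^{-1+a}$ with $a:=\min\{\alpha-1,3\beta-1\}>0$, so they are integrable at $t=0$. Local well-posedness in $H^m_xL^2_y$ follows as in \Cref{lem:LWP}. We bootstrap the norm $\|\nu\|_{X_T}$ of \eqref{def:bootstrapnorm}, now with $\delta<a$, and will in fact get uniform $H^m$ bounds.

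\textbf{Nonlinear estimates.} For the power term we generalize \Cref{lem:powertype} to $F(\varphi)=\|\varphi\|_{L^2_y}^{2\beta}\varphi$. Since $0<2\beta\le1$, the same argument gives $\|DF(\varphi)\|_{\mathrm{op}}\lesssim\|\varphi\|^{2\beta}$ and $DF\in C^{0,2\beta}$; it uses $|r_1^{2\beta}-r_2^{2\beta}|\le|r_1-r_2|^{2\beta}$ and the projection bound with exponent $2\beta$. With $r=1+2\beta$ and $m<1+2\beta$, the Besov argument then yields
\[
\|F(\nu)\|_{\dot H^m_xL^2_y}\lesssim\|\nu\|_{L^\infty_xL^2_y}^{2\beta}\|\nu\|_{\dot H^m_xL^2_y}.
\]
For the Hartree term we need $\|V\|_{L^\infty}$ and $\|V\|_{\dot H^m}$ with $V=|\cdot|^{-\alpha}\ast\rho_{\nu\nu^*}$. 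The $L^\infty$ bound follows from the splitting argument of \eqref{est:interpolation-riesz}, giving $\|\rho\|_{L^1}^{1-\alpha/3}\|\rho\|_{L^\infty}^{\alpha/3}$ since $\alpha<3$. For the $\dot H^m$ bound we write $\|V\|_{\dot H^m}\sim\||\nabla|^{m-(3-\alpha)}\rho\|_{L^2}$. Here $m<3-\alpha$, so the order $m+\alpha-3$ is negative, and Hardy--Littlewood--Sobolev gives $\lesssim\|\rho\|_{L^{p}}$ with $1/p=1/2+(3-\alpha-m)/3$. Since $\alpha<3/2$ and $m>3/2$, we have $3-\alpha-m<3/2$, hence $p\in(1,2)$ as required. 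Then $\rho\in L^1\cap L^\infty$ is bounded by $\epsilon_1^2$, and the fractional Leibniz rule gives $\|V\nu\|_{H^m_xL^2_y}\lesssim\epsilon_1^3$. This is where the restrictions $m<3-\alpha$ and $\alpha<3/2$ enter.

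\textbf{Bootstrap.} The energy estimate is now $\|\nu(t)\|_{H^m}\le\epsilon_0+\int_t^1\tau^{-1+a}(\epsilon_1^3+\epsilon_1^{1+2\beta})\,\dd\tau$, which is uniformly bounded, so the weight $t^\delta$ is even unnecessary. For the $L^\infty_xL^2_y$ bound we can avoid the modified profile entirely. With $\eta=\ee^{-\ii t\Delta/2}\nu$ we have $\ii\partial_t\eta=\ee^{-\ii t\Delta/2}N(\nu)$, where $N$ is bounded in $H^s_xL^2_y\subset L^\infty_xL^2_y$ ($s>3/2$) by $\tau^{-1+a}\epsilon_1^{1+2\beta}$. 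Hence $\eta$ converges in $H^s$ as $t\to0^+$ at rate $t^{a}$, and $\nu=\eta+(1-\ee^{-\ii t\Delta/2})\nu$ is controlled exactly as in \Cref{lem:decay}. This closes $\|\nu\|_{X_T}\le C_0\epsilon_0+C_1\epsilon_1^{1+2\beta}$, and the continuation argument gives global existence.

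\textbf{Scattering.} The limit $\eta(0)$ exists in $L^2_{x,y}$ with $\|\eta(t)-\eta(0)\|_{L^2}\lesssim\epsilon_1^{1+2\beta}t^{a}$. We undo the transform as in the proof of \Cref{thm:main2}, with $\Psi\equiv0$, to get $\kappa(t)=\ee^{\ii t\Delta_x/2}\mathcal F^{-1}\overline{\eta(1/t)}$. This yields $\kappa_\infty=\mathcal F^{-1}\overline{\eta(0)}$ and the rate $t^{-a}$. The density decay and the statement for $\gamma=\kappa\kappa^*$ follow as in \Cref{sec:proof-main1}; uniqueness follows by unitarity. The main obstacle is the Hartree $\dot H^m$ bound and the parameter bookkeeping there. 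The power-type chain rule with $\beta\le1/2$ is a direct transcription of \Cref{lem:powertype}.
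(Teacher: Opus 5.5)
Your proposal is correct and follows essentially the same route as the paper: the pseudo-conformal transform to $\nu$ with integrable coefficients $t^{\alpha-2}$ and $t^{3\beta-2}$, the splitting bound plus Hardy--Littlewood--Sobolev (using $m<3-\alpha$) for the Hartree term, the $2\beta$-version of \Cref{lem:powertype} for the power term, and Duhamel convergence of $\eta=\ee^{-\ii t\Delta_x/2}\nu$ as $t\to0^+$. The only difference is that you bootstrap the full $X_T$ norm and recover the $L^\infty_xL^2_y$ bound through the profile, whereas the paper bootstraps only $\sup_t\|\nu(t)\|_{H^m_xL^2_y}$; since $m>3/2$, that norm already controls $L^\infty_xL^2_y$ by Sobolev embedding, so your extra step is unnecessary.
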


\begin{proof}
	We recall the pseudo-conformal transformation introduced in \Cref{subsec:pseudo-conformal}:
	\[
	\omega(t,x,y) = (\ii t )^{d/2} \ee^{-\ii t|x|^2/2} \kappa(t,tx,y), \qquad \nu(t,x,y) = \overline{\omega(1/t,x,y)}, \qquad t \in (0,1].
	\]
	Then \Cref{eqn:KS-subcrit-u} becomes
	\[
	\ii\partial_t \nu(t)
	= - \frac{1}{2} \Delta_x \nu(t) + \left( \lambda t^{-2 + \alpha} (|x|^{-\alpha} \ast_x \rho_{\nu\nu^{\ast}}) (t) + \mu t^{-2 + 3 \beta} \rho_{\nu\nu^{\ast}}^{\beta}(t) \right) \nu(t), \qquad t \in (0,1].
	\]
	Using \Cref{eqn:pseudo-conformal-prop}, we have
	\[
	\| \nu_1 \|_{H^m_x L^2_y} \le \epsilon_0.
	\]
	We omit the proof of local well-posedness for \Cref{eqn:KS-subcrit-u}, which can be done similarly as in \Cref{sec:LWP}. For the local-in-time solution $\nu(t)$ in $C([T^{-1},1], H^m_x L^2_y)$, we make the following bootstrap assumption.
	\[
	\sup_{t \in [T^{-1} , 1]}
	\| \nu(t) \|_{H^m_x L^2_y}
	\le \epsilon_1.
	\]
	To close the bootstrap argument, we prove that
	\begin{equation}\label{eqn:subcrit-apriori}
		\sup_{t \in [T^{-1}, 1]} \| \nu(t) \|_{H^m_x L^2_y} \lesssim \epsilon_0 + \epsilon_1^{1+2\beta}.
	\end{equation}
	Observe that $\nu(t)$ satisfies
	\[
	\nu(t) = \ee^{\ii(t-1)\Delta_x/2} \nu_1 -\ii \int_1^t \ee^{\ii(t-\tau)\Delta_x/2} \left( \lambda \tau^{-2 + \alpha} (|x|^{-\alpha} \ast_x \rho_{\nu\nu^{\ast}} (\tau)) + \mu \tau^{-2 +3 \beta} \rho_{\nu\nu^{\ast}}^{\beta}(\tau,x) \right) \nu(\tau) \, \dd\tau
	\]
	for $T^{-1} \le t \le 1$. Following the proof in \Cref{lem:potential1v}, we split the integral at $R = \| \rho_{\nu\nu^{\ast}} (\tau)\|_{L^1}^{1/3} \| \rho_{\nu\nu^{\ast}} (\tau)\|_{L^{\infty}}^{-1/3}$, which yields
	\[
	\| |x|^{-\alpha} \ast \rho_{\nu\nu^{\ast}} (\tau) \|_{L^{\infty}} \lesssim \| \rho_{\nu\nu^{\ast}} (\tau) \|_{L^1}^{(3-\alpha)/3} \| \rho_{\nu\nu^{\ast}} (\tau) \|_{L^{\infty}}^{\alpha/3} \lesssim \epsilon_1^2.
	\]
	For the $\dot{H}^m_x L^2_y$ bound, we note that $\partial_x^m(|x|^{-\alpha} \ast \rho_{\nu\nu^{\ast}} ) = \partial_x^m (-\Delta_x)^{-(3-\alpha)/2} \rho_{\nu\nu^{\ast}}$ with $ m - 3 +\alpha \in(-1/2, 0)$. Applying Hardy--Littlewood--Sobolev inequality, we have
	\begin{equation}\label{eqn:HLS}
		\| |x|^{-\alpha} \ast \rho_{\nu\nu^{\ast}} (\tau) \|_{\dot{H}^m} \lesssim \| \rho_{\nu\nu^{\ast}} (\tau) \|_{L^{6/(9-2\alpha-2m)}} \lesssim \epsilon_1^2.
	\end{equation}
	It follows from the fractional Leibniz rule that
	\begin{align*}
		\left\| \left(|x|^{-\alpha} \ast_x \rho_{\nu\nu^{\ast}} \right)\nu(\tau) \right\|_{H^m_x L^2_y}
		&\lesssim \left\| |x|^{-\alpha} \ast_x \rho_{\nu\nu^{\ast}}(\tau) \right\|_{L^{\infty}} \| \nu (\tau) \|_{H^m_x L^2_y}
		+\left\| |x|^{-\alpha} \ast_x \rho_{\nu\nu^{\ast}}(\tau) \right\|_{\dot{H}^m} \| \nu (\tau) \|_{L^{\infty}_x L^2_y}
		\\
		&\lesssim \epsilon_1^3.
	\end{align*}
	
	Using arguments similar to those in the proof of \Cref{lem:powertype}, one can prove an analogous bound for $F(\varphi)= \| \varphi\|_{L^2}^{2\beta} \varphi$ with $\beta \in (1/3, 1/2]$ in three dimensions. Hence we bound
	\begin{align*}
		\left\| \rho_{\nu\nu^{\ast}}^{\beta} \nu(\tau) \right\|_{H^m_x L^2_y} \lesssim \| \nu(\tau) \|_{L^{\infty}_x L^2_y}^{2\beta} \|\nu(\tau)\|_{H^m_x L^2_y} \lesssim \epsilon_1^{1+2\beta}.
	\end{align*}
	Combining these estimates, we obtain that
	\[
	\| \nu(t) \|_{H^m_x L^2_y}
	\lesssim \epsilon_0 +   \int_t^1 \left(\epsilon_1^3 \tau^{-2+\alpha} + \epsilon_1^{1+2\beta} \tau^{-2+3\beta} \right) \dd\tau \lesssim \epsilon_0 + \epsilon_1^{1+2\beta},
	\]
	which completes the proof of \Cref{eqn:subcrit-apriori}.
	
	To establish scattering, we consider the profile $\eta(t) =  \ee^{-\ii t\Delta_x/2} \nu(t)$. Then we can write
	\[
	\ii \partial_t \eta(t)
	= \ee^{-\ii t\Delta_x/2} \left( \lambda t^{-2 + \alpha} (|x|^{-\alpha} \ast_x \rho_{\nu\nu^{\ast}} (t)) + \mu t^{-2 +3 \beta} \rho_{\nu\nu^{\ast}}^{\beta}(t) \right) \nu(t)
	\]
	so that Sobolev embedding and Duhamel principle yield
	\begin{equation}\label{est:subcrit-t1t2}
		\begin{aligned}
			\| \eta(t_1) - \eta(t_2) \|_{(L^2_x \cap L^{\infty}_x)L^2_y} &\lesssim \int_{t_2}^{t_1} \left( \tau^{-2+\alpha} \| (|x|^{-\alpha} \ast \rho_{\nu\nu^{\ast}}) \nu(\tau) \|_{H^m_x L^2_y} + \tau^{-2 + 3\beta} \| \rho_{\nu\nu^{\ast}}^{\beta} \nu (\tau) \|_{H^m_x L^2_y} \right) \, \dd \tau \\
			&\lesssim \int_{t_2}^{t_1} \left( \epsilon_1^3 \tau^{-2+\alpha} + \epsilon_1^{1+2\beta} \tau^{-2+3\beta} \right) \, \dd \tau \\
			&\lesssim \epsilon_1^3 \left(t_1^{-1+\alpha} - t_2^{-1+\alpha}\right) + \epsilon_1^{1+2\beta} \left(t_1^{-1+3\beta} - t_2^{-1+3\beta}\right)
		\end{aligned}
	\end{equation}
	for $0< t_2 \le t_1 \le 1$. Since $\alpha >1$ and $\beta>1/3$, the limit of $\eta(t)$ exists as $t \rightarrow 0^{+}$ in $(L^2_x \cap L^{\infty}_x)L^2_y$. We denote it by $\eta(0)$. Inverting the conjugations and pseudo-conformal transformation, we get
	\begin{align*}
		\kappa(t) = \ee^{\ii t\Delta_x/2} \mathcal{F}^{-1} \overline{\eta(1/t)}.
	\end{align*}
	Since $\ee^{\ii t \Delta_x/2}$ is unitary on $L^2_x$, we obtain from \Cref{est:subcrit-t1t2} that
	\[
	\| \kappa(t) - \ee^{\ii t\Delta_x/2} \mathcal{F}^{-1} \overline{\eta(0)}\|_{L^2_{x,y}}
	\lesssim \| \eta(1/t) - \eta(0) \|_{L^2_{x,y}}
	\lesssim \epsilon_1^{1+2\beta} (t^{1-\alpha} + t^{1-3\beta}),
	\]
	which in particular proves scattering by setting $\kappa_{\infty}:= \mathcal{F}^{-1} \overline{\eta(0)} \in L^2_{x,y}$.
\end{proof}

\section*{Acknowledgment}
Masaki Kawamoto is partially supported by JSPS KAKENHI Grant Number JP24K06796 and JP26K00612.
Jinyeop Lee is partially supported by Global - Learning \& Academic research institution for Master's$\cdot$PhD students, and Postdocs(G-LAMP) Program of the National Research Foundation of Korea(NRF) grant funded by the Ministry of Education (No. RS-2025-25442355), and by a grant from Kyung Hee University in 2026 (KHU-20262263).
Chanjin You is partially supported by the NSF under grant DMS-2349981.

\bibliographystyle{abbrv}

\end{document}